\tikzset{arrow/.style={-latex, shorten >= 1ex, shorten <=1ex}}
\pgfplotsset{compat=1.12}
\numberwithin{equation}{section}
\crefname{section}{section}{sections}
\crefname{subsection}{subsection}{subsections}
\Crefname{section}{Section}{Sections}
\Crefname{subsection}{Subsection}{Subsections}
\crefname{algo}{Algorithm}{Algorithms}
\crefname{table}{Table}{Tables}
\newtheorem{theorem}{Theorem}[section]
\newtheorem{definition}[theorem]{Definition}
\newtheorem{proposition}[theorem]{Proposition}
\newtheorem{remark}[theorem]{Remark}
\newtheorem{assumption}{Assumption}
\DeclareMathOperator{\Div}{div}
\DeclareMathOperator{\spann}{span}
\newcommand{\la}{\langle} 
\newcommand{\ra}{\rangle}
\newcommand{\lb}{\lbrace} 
\newcommand{\rb}{\rbrace}
\newcommand{\R}{\mathbb{R}} 
\newcommand{\N}{\mathbb{N}}
\newcommand{\HH}{\mathcal{H}} 
\newcommand{\tol}{\texttt{tol}}
\newcommand{\nrand}{n_\mathrm{rand}}
\newcommand{\nt}{n_\mathrm{t}}
\newcommand{\proj}{P}
\title{Randomized quasi-optimal local approximation spaces in time}
\author{Julia Schleu{\ss}}
\address{Faculty of Mathematics and Computer Science, University of M\"unster, Einsteinstr. 62, 48149 M\"unster, Germany, julia.schleuss@uni-muenster.de.}
\author{Kathrin Smetana}
\address{Department of Mathematical Sciences, Stevens Institute of Technology, 1 Castle Point Terrace, Hoboken, NJ 07030, United States of America, ksmetana@stevens.edu.}
\author{Lukas ter Maat}
\address{Currently Master student at the University of Twente, The Netherlands, private address: Kieftsbeeklaan 33, 7607TA Almelo, The Netherlands, lukas.tm@hotmail.nl.}
\date{\today}
\thanks{The work of Julia Schleu{\ss} was funded by the Deutsche Forschungsgemeinschaft (DFG, German Research Foundation) under Germany's Excellence Strategy EXC 2044-390685587, Mathematics M\"unster: Dynamics-Geometry-Structure.}
\subjclass[2010]{65C20, 65M12, 65M15, 65M55, 65M60, 65M75}
\keywords{multiscale methods, model order reduction, randomized numerical linear algebra, Kolmogorov n-width}
\begin{document}
	
	\begin{abstract}
	We target time-dependent partial differential equations (PDEs) with heterogeneous coefficients in space and time. To tackle these problems, we construct reduced basis/ multiscale ansatz functions defined in space that can be combined with time stepping schemes within model order reduction or multiscale methods. To that end, we propose to perform several simulations of the PDE for few time steps in parallel starting at different, randomly drawn start points, prescribing random initial conditions; applying a singular value decomposition to a subset of the so obtained snapshots yields the reduced basis/ multiscale ansatz functions. This facilitates constructing the reduced basis/ multiscale ansatz functions in an embarrassingly parallel manner. In detail, we suggest using a data-dependent probability distribution based on the data functions of the PDE to select the start points. Each local in time simulation of the PDE with random initial conditions approximates a local approximation space in one time point that is optimal in the sense of Kolmogorov. The derivation of these optimal local approximation spaces which are spanned by the left singular vectors of a compact transfer operator that maps arbitrary initial conditions to the solution of the PDE in a later point of time, is one other main contribution of this paper. By solving the PDE locally in time with random initial conditions, we construct local ansatz spaces in time that converge provably at a quasi-optimal rate and allow for local error control. Numerical experiments demonstrate that the proposed method can outperform existing methods like the proper orthogonal decomposition even in a sequential setting and is well capable of approximating advection-dominated problems.
	\end{abstract}

	\maketitle


\section{Introduction}
\label{introduction}

Applications that require repeated simulations for different parameters or a real-time simulation response of complex systems of partial differential equations (PDEs) or dynamical systems are ubiquitous. Moreover, heterogeneous problems that exhibit multiscale features or include rough data functions are particularly challenging. A direct numerical simulation using standard techniques such as the finite element (FE) method can be prohibitively expensive for such tasks. Well-known strategies to tackle these (heterogeneous) problems comprise multiscale methods which are based on local ansatz functions that incorporate the local behavior of the (numerical) solution of the PDE and model order reduction methods that exploit a carefully chosen set of problem-adapted basis functions to reduce the high-dimensional problem. 

In this paper, we consider heterogeneous time-dependent PDEs and propose reduced basis/ multiscale ansatz functions defined in space that can be combined with time stepping schemes within model order reduction or multiscale methods. We provide one of the first contributions that facilitates constructing the reduced basis/ multiscale ansatz functions in an embarrassingly parallel manner in time. As a major new contribution in this paper, we select important points in time and only perform local simulations of the PDE on the corresponding local time intervals instead of decomposing the global time interval into consecutive subintervals \cite{ChuEf18,LjuMai21}. As the numerical experiments show, this can result in a reduced total number of computed time steps, whereas the existing approaches \cite{ChuEf18,LjuMai21} require (local) computations everywhere in the entire time interval. To choose relevant points in time, we employ data-dependent sampling strategies from randomized numerical linear algebra (NLA) \cite{DriMah16,DerMah21} in a completely new context since they are usually used to construct low-rank matrix decompositions (cf., e.g., \cite{MahDri09}). As another key contribution we derive for the first time local approximation spaces in time that are optimal in the sense of Kolmogorov. Moreover, we provide for the first time a rigorous local a priori error analysis in time as one major new contribution.

A well-established tool for compressing and reducing time trajectories is the proper orthogonal decomposition (POD) \cite{BerHol93,KunVol01,Sir87}, which is based on a singular value decomposition (SVD) of the functions evaluated in the time grid points and allows for error control. However, in order to perform a POD on simulation data, the (global) solution trajectory of the considered problem has to be computed sequentially prior to reducing. 

In contrast, the approach we propose in this paper enables, as one major contribution, to construct reduced basis/ multiscale ansatz functions in parallel in time. To facilitate a time-parallel procedure, we propose to perform several simulations of the PDE for only few time steps in parallel. To this end, we start the simulations at different start time points that are randomly drawn from a data-dependent sampling distribution and prescribe random initial conditions. Subsequently, we apply an SVD to a subset of the computed snapshots to obtain the reduced basis/ multiscale ansatz functions. The proposed method is thus well-suited to be used on modern computer architectures allowing for many parallel computations and on each single compute unit a simulation for only few time steps has to be performed. Moreover, as another major contribution, the approach is especially tailored to time-dependent problems with heterogeneous time-dependent data functions. To draw start time points for the temporally local PDE simulations, we employ uniform, squared norm \cite{FrKaVe04}, or leverage score \cite{DrMaMu08} sampling, which are standard sampling techniques from randomized NLA \cite{DerMah21,DriMah16}. In particular, both squared norm and leverage score sampling take into account the time-dependent data functions of the PDE and are commonly used in a variety of applications \cite{DerMah21}, for instance, to construct CUR\footnote{A CUR decomposition of a matrix A consists of three matrices C, U, and R, where C (R) contains columns (rows) of A and U is constructed such that the product CUR approximates A. As the matrices C and R are constructed from actual elements of A, the decomposition is usually more interpretable with respect to the original data compared to, e.g., a truncated SVD (cf., e.g., \cite{MahDri09}).} or similar matrix decompositions by approximating the matrix via its columns or rows. As one major contribution of this paper, we employ these methods in a completely new context for the purpose of time point selection.

To this end, the time-dependent data functions are discretized and represented by a matrix, where each column of the matrix corresponds to one time point. Moreover, other randomized subset selection technique, as proposed, for instance, in \cite{AlaMah15,DesRad06,DesRad10,DriMag12}, may also be used to choose time points.

The key observation, motivating a localized construction in time, is that for certain time-dependent problems the solution exhibits a very rapid, exponential decay of energy in time. To detect the functions that still persist at a point of time and are thus relevant for approximation, we introduce as a key new contribution a compact transfer operator in time that maps arbitrary initial conditions to the solution of the PDE in a later point of time. Spanning the local space by the leading left singular vectors of the transfer operator results in an approximation space that is optimal in the sense of Kolmogorov \cite{Kol36} and hence minimizes the approximation error among all spaces of the same dimension. While there are many methods that exploit localization in space \cite{BabLip11,GraGre12,MaaPet14,MaaPer18,OwhZha11,OwhZha17,OwhZha14,SmePat16,EftPat13,HuyKne13,IapQua12,MadRon02}, we provide with this paper one of the first contributions that exploit localization in time \cite{ChuEf18,LjuMai21}. Whereas existing methods \cite{ChuEf18,LjuMai21} decompose the entire time interval into consecutive local subintervals, we select relevant time points in a data-dependent manner and only perform local computations on the corresponding local time intervals. Moreover, we provide for the first time an a priori error bound for the local approximation error in time.

As a direct calculation of the leading left singular vectors of the transfer operator can become computationally expensive, we employ random sampling as proposed in \cite{BuhSme18} for elliptic PDEs to efficiently approximate the optimal local spaces and facilitate an embarrassingly parallel construction of the reduced ansatz functions even for a single point of time. To this end, we solve the PDE locally in time with random initial conditions. We show that the resulting local space yields an approximation that converges at a quasi-optimal rate, allowing for local error control. While we only provide a local a priori error bound in this paper, we conjecture that it might be possible to also derive a global error bound.

Preliminary numerical experiments can be found in one of the authors bachelor thesis \cite{terMaat19}, where the proposed algorithm has been tested for problems with time-dependent source terms by uniformly sampling time points.

Optimal spatially local approximation spaces have been introduced for elliptic \cite{BabLip11,MaSch21,SmePat16} and parabolic \cite{SchSme20} problems, and random sampling has been employed to efficiently approximate the optimal local spaces in the elliptic setting in \cite{BuhSme18,KeLi20}. Further spatially localizable multiscale methods for parabolic problems have been proposed in \cite{ChuEf18,MaaPer18,OwhZha11,OwhZha17,OwhZha14}. In \cite{LjuMai21} a space-time multiscale method for the linear heat equation is introduced, where for each coarse space-time node a corrector function that is localized in both space and time is computed to capture (local) fine-scale features. While in \cite{LjuMai21} a global a posteriori error bound is proved assuming that certain localization parameters are large enough to guarantee sufficiently small localization errors, but no rigorous a priori error analysis is performed, we provide in this paper for the first time a local a priori error bound in time. Moreover, we refer to \cite{Betal20} for an overview of methods to construct local reduced spaces.

In system and control theory balanced truncation is a well-known method to reduce the complexity of input-output systems \cite{Moo81,Row05}. Balanced truncation for systems including time-dependent data functions has been introduced in \cite{ShoSil83,VerKai83} and its error analysis has been first studied in \cite{LalBec03,SanRan04}. Nevertheless, solving matrix differential equations or matrix inequalities is required which is prohibitively expensive for high dimensional problems. In \cite{ChaDoo05} the authors propose an iterative procedure that is computationally more appealing as it is based on operations which exploit sparsity of the model. However, the required computations are global in time and have to be carried out in a sequential manner, leading to a complexity that depends on the global time discretization. In contrast, the approach we propose here requires only local computations in time that are in addition parallelizable.

In \cite{UngGug19} it is shown that for linear time-invariant systems the concepts of Kolmogorov $n$-widths and Hankel singular values are directly connected and that the right singular vectors of the Hankel operator restricted to the unit ball span the optimal reduced input space in the sense of Kolmogorov and can be linked to active subspaces. We suggest that the leading right singular vectors of the transfer operator introduced here span the optimal reduced input space and can thus be used to regularize inverse problems and data assimilation procedures.

Furthermore, dynamic mode decomposition \cite{Sch10,JonCla14} fits an operator that maps the solution from one time point to the next to simulation data. The fitted operator is thus similar to the transfer operator we introduce in this paper. However, similar to the POD and in contrast to the approach we propose here, access to the (global) solution trajectory is required to compress the entire dynamics. Recently, probabilistic numerical methods that yield a probability distribution over the (unknown) solution of an ordinary or partial differential equation have been proposed, for instance, in \cite{SchDuv14,SchSar19,ConGir17,Owh15}. Moreover, randomized subset selection techniques are used in \cite{Sai20} for the purpose of hyperreduction. In comparison, in this paper data-dependent probability distributions and randomized subset selection techniques are exploited to select both start time points and initial conditions for the temporally local PDE simulations.

The remainder of this paper is organized as follows. In \cref{sec_problem_setting} we introduce the general time-dependent model problem together with an exemplary test case and its numerical approximation. Subsequently, we first sketch the key new contributions of this paper along with some motivation in \cref{subsection_motivation} and then develop the main contributions in \cref{sec_opt_spaces,sec_random_spaces}. We propose optimal local spaces in time in \cref{sec_opt_spaces} and address their approximation via random sampling in \cref{sec_random_spaces}. Moreover, in \cref{sec_random_spaces} we propose a randomized algorithm to construct one reduced space for the global approximation by solving several local problems in time in parallel. We discuss both its basic properties and the choice of the probability distribution used for drawing time points. Finally, we present numerical experiments in \cref{numerical_experiments} to demonstrate the approximation properties of the proposed algorithm and draw conclusions in \cref{conclusions}.

\section{Problem setting}
\label{sec_problem_setting}

We consider a time-dependent linear PDE, say, an advec-tion-diffusion-reaction problem, that may include heterogeneous time-dependent coefficients. To that end, let $D \subseteq \R^n$ denote a bounded Lipschitz domain of dimension $n\in\lb1,2,3\rb$ and let $I=(0,T)\subset \R$ be a time interval with $0<T<\infty$. We assume that on $I\times D$ a Bochner space $\mathcal{V}$ and a reflexive Bochner space $\mathcal{W}$ with dual space $\mathcal{W}^*$ are given that will serve as ansatz and test space, respectively. Furthermore, we denote by $\partial_t + \mathcal{A}: \mathcal{V} \rightarrow \mathcal{W}^*$ a linear, continuous, surjective, and inf-sup stable operator. Let $\mathcal{F}\in \mathcal{W}^*$ be a bounded linear functional. We assume that $\mathcal{F}$ accounts for both source terms and boundary data. Moreover, we denote the initial values by $u_0 \in L^2(D)$. Here, we assume that for any function $u\in \mathcal{V}$ we have that $u(t,\cdot)\in L^2(D)$ for every point in time $t\in I$. Then, we consider the following variational problem: Find $u\in \mathcal{V}$ such that $u(0,\cdot)=u_0$ in $L^2(D)$ and
\begin{align}\label{PDE}
\partial_t u + \mathcal{A} u = \mathcal{F} \quad \text{in } \mathcal{W}^*.
\end{align}
The Banach-Ne\v{c}as-Babu\v{s}ka theorem (e.g. \cite[Theorem 2.6]{Ern2004}) and the assumptions above guarantee the existence of a unique solution of \cref{PDE}.

\textbf{Exemplary model problem: advection-diffusion-reaction problem.} We consider an advection-diffusion-reaction problem as a representative model problem of \cref{PDE}. To this end, we assume that $\partial D = \Sigma_D \cup \Sigma_N$ with $\vert \Sigma_D\vert >0$ and denote by $f:I\times D\rightarrow \R$ source terms, $u_0: D\times\R$ initial conditions, $g_D:I\times\Sigma_D\rightarrow \R$ Dirichlet boundary conditions, $g_N:I\times\Sigma_N\rightarrow \R$ Neumann boundary conditions, and $n$ the outer unit normal, respectively. In its weak form the conductivity coefficient is assumed to satisfy $\kappa \in L^\infty(I\times D)^{n\times n}$ with $\kappa_0(t,x) \vert v \vert^2 \leq v^\top \kappa(t,x) v \leq \kappa_1(t,x) \vert v \vert^2$ for every $v\in\R^{n}$, $0<\kappa_0\leq\kappa_0(t,x)\leq\kappa_1(t,x)\leq\kappa_1<\infty$ for almost every $(t,x)\in I\times D$, where we do not consider the case $\kappa_0\rightarrow 0$ such that we can assume that a weak solution with spatial regularity of $H^1(D)$ exists. Furthermore, the advection field and the reaction coefficient are assumed to satisfy $b \in L^\infty(I\times D)^n$, $\nabla \cdot b \in L^\infty(I\times D)$, and $c\in L^\infty(I\times D)$ with $\vert b(t,x) \cdot v \vert^2 \leq b_1(t,x) \vert v \vert^2$ for every $v\in\R^{n}$, $0\leq b_1(t,x)\leq b_1<\infty$ for almost every $(t,x)\in I\times D$, and $\vert c(t,x)\vert\leq c_1 < \infty$ for almost every $(t,x)\in I\times D$, respectively. To ensure well-posedness, we moreover assume that $c(t,x) -\frac{1}{2}\nabla \cdot b(t,x) \geq 0$ for almost every $(t,x)\in I\times D$. We then seek the solution $u: I\times D \rightarrow\R$ such that
\begin{align}\label{PDE_ex}
\begin{split}
u_t(t,x)-\Div(\kappa(t,x)\nabla u(t,x)) + b(t,x) \cdot \nabla u(t,x) + c(t,x)\, u (t,x)=f(t,x)  \\ \text{ for every }(t,x) \in I\times D, \\
\begin{array}{rlll}
u(t,x)&=&g_D(t,x) \quad &\text{ for every } (t,x) \in I \times\Sigma_D,\\
\kappa(t,x) \nabla u(t,x) \cdot n(x) &=&g_N(t,x) \quad &\text{ for every } (t,x) \in I \times\Sigma_N,\\
u(0,x)&=&u_0(x) \quad &\text{ for every }x \in  D.
\end{array}
\end{split}
\end{align}

\textbf{Discretization.} Here, we consider $f\in L^2(I,L^2(D))$ and $g_N \in L^2(I,L^2(\Sigma_N))$. To simplify notation, we assume that $f$ also accounts for Dirichlet boundary conditions $g_D\in L^2(I,H^{1/2}(\Sigma_D))$. For the numerical approximation of \cref{PDE_ex}, we employ the implicit Euler method. To this end, we assume that the time interval $I$ is partitioned via $N_I$ equidistant time points $0=t_0<t_1<\ldots< t_{N_I-1}=T$ of distance $\Delta_T = T/(N_I-1)$. However, the approximation can be done completely analogously for other time stepping schemes. Moreover, we consider a piecewise linear conforming FE space $X$ of dimension $N_D\in \N$ with basis functions $\phi_i\in X$, $i=1,\ldots,N_D$. Then, the mass and stiffness matrices $\mathbf{M},\,\mathbf{A}_l\in \R^{N_D\times N_D}$ and the right-hand side vectors $\mathbf{F}_l\in \R^{N_D}$ are given as
\begin{align}\label{FE_matrices}
\begin{split}
\mathbf{M}_{ij} &:= (\phi_j,\phi_i)_{L^2(D)}, \quad 1\leq i,j \leq N_D,\\
(\mathbf{A}_l)_{ij} &:= (\kappa(t_l) \nabla \phi_j, \nabla \phi_i)_{L^2(D)} + (b(t_l) \cdot \nabla \phi_j, \phi_i)_{L^2(D)} + (c(t_l) \phi_j, \phi_i)_{L^2(D)},\\
&\mspace{314mu} 1\leq i,j \leq N_D,\;1\leq l \leq N_I-1,\\
(\mathbf{F}_l)_{i} &:= (f(t_l),\phi_i)_{L^2(D)} + (g_N(t_l), \phi_i)_{L^2(\Sigma_N)}, \quad 1 \leq i \leq N_D,\;1\leq l \leq N_I-1.
\end{split}
\end{align}
To simplify the presentation, we assume here that this discretization is stable, meaning that the advection should not be too dominant.
Given a discrete representation $\mathbf{u}_0\in \R^{N_D}$ of the initial values $u_0$, we approximate the solution of \cref{PDE_ex} by computing $\mathbf{u}_{l} \in \R^{N_D}$ at time point $t_l$ for $l=1,\ldots, N_I-1$ via
\begin{align} \label{eq_time_stepping}
(\mathbf{M}+\Delta_T \mathbf{A}_l)\,  \mathbf{u}_l = \Delta_T \mathbf{F}_l + \mathbf{M} \mathbf{u}_{l-1}.
\end{align}

In the following, we assume that the discretization in space and time is chosen sufficiently fine such that the discretization error between the exact solution and the discrete solution of \cref{eq_time_stepping} is negligibly small compared to the multiscale or model order reduction error. To reduce the possibly very high-dimensional discrete problem \cref{eq_time_stepping} (due to, for instance, fine-scale features in the coefficient functions that need to be well resolved) we apply multiscale or model order reduction methods.

\textbf{Reduced approximation.} We assume for now that suitable reduced ansatz functions $\varphi_1,\ldots,\varphi_N \in X$ are given that will be determined below (cf. \cref{sec_random_spaces}). The matrix $\mathbf{U}_\text{red} = [\boldsymbol{\varphi}_1 \ldots \boldsymbol{\varphi}_N]\in \R^{N_D\times N}$ contains the corresponding FE coefficient vectors. We then compute a reduced approximation of \cref{eq_time_stepping} via Galerkin projection of the FE space onto the space spanned by the reduced basis: For $l=1,\ldots,N_I-1$ find $\mathbf{u}_{\text{red},l}\in\R^N$ such that 
\begin{align} \label{eq_reduced_global_approximation}
\mathbf{u}_{\text{red},l} & = (\mathbf{M}_\text{red}+\Delta_T \mathbf{A}_{\text{red},l})^{-1} (\Delta_T \mathbf{F}_{\text{red},l} + \mathbf{M}_\text{red} \mathbf{u}_{\text{red},l-1}),
\end{align}
where $\mathbf{M}_\text{red} = \mathbf{U}_\text{red}^\top \mathbf{M} \mathbf{U}_\text{red}$, $\mathbf{A}_{\text{red},l} = \mathbf{U}_\text{red}^\top \mathbf{A}_{l} \mathbf{U}_\text{red}$, $\mathbf{F}_{\text{red},l} = \mathbf{U}_\text{red}^\top \mathbf{F}_{l}$, and $\mathbf{u}_{\text{red},0} = \mathbf{M}_\text{red}^{-1}  \mathbf{U}_\text{red}^\top \mathbf{M} \mathbf{u}_0$.

\section{Motivation and key new contributions of this paper}
\label{subsection_motivation}

To construct reduced ansatz functions $\varphi_1,\ldots,\varphi_N \in X$ as given above, a well-established strategy is to perform a POD on (the first part of) the global solution trajectory. For this purpose, prior to reducing, the global solution in time has to be computed in a sequential manner (cf. \cref{fig_transfer_operator} (top row)). In contrast, we propose in this paper, as a major new contribution, to generate a reduced basis in an embarrassingly time-parallel manner which enables to split the computational budget and distribute it over the entire time interval (cf. \cref{fig_transfer_operator} (middle row)). The approach is thus well-suited to be used on modern computer architectures allowing for many parallel computations. Moreover, we conjecture that the proposed ideas may in general contribute to reduce computational costs and exploit parallelization in applications where the computation of classical full order solutions is extremely expensive, for instance, by combining the proposed approach with the parareal methodology \cite{LioMad01}.

To motivate the proposed approximation strategy, we consider the toy model problem visualized in \cref{1D_motivation_plot} and observe that the time-dependent source terms $f_1$ and $f_2$ clearly determine the behavior of the corresponding solution in time. Therefore, we suggest that the time-dependent data functions may help to determine time points that are relevant for approximation. To this end, we represent the time-dependent data functions as matrices, where each column corresponds to one time point in the time grid, see \cref{1D_motivation_plot} (top right). To detect and select significant time points, we then employ column subset selection techniques from randomized NLA \cite{DerMah21,DriMah16}. These methods are generally used to construct low-rank matrix decompositions (cf., e.g., \cite{MahDri09}) by approximating (the range of) the matrix from selected columns or rows. The decompositions are thus more interpretable with respect to the original data compared to an SVD and error bounds are available (cf., e.g., \cite{DriMah16,MahDri09}). As one key contribution, we exploit column subset selection techniques in a completely new context for the purpose of time point selection.
	
Subsequently, we generate reduced ansatz functions corresponding to the selected time points in an embarrassingly parallel manner by solving independent local problems in time (cf. \cref{fig_transfer_operator} (middle row)). To motivate a localized construction of the reduced ansatz functions in time, we recall the following well-known property that many time-dependent problems share via the example of the linear heat equation: If $\mathcal{F} \equiv 0$ it is straightforward to show that $\|u( t,\cdot) \|_{L^{2}(D)} \leq e^{-Ct} \| u_0\|_{L^{2}(D)}$ for any $t \in I$, where the constant $C$ depends only on the shape of $D$ and the heat conductivity coefficient. To detect the functions that still persist at a selected time point $t$ and are thus relevant for approximation purposes, we introduce, as a key new contribution, a transfer operator $\mathcal{T}_{s \rightarrow t}$ in time that takes arbitrary initial conditions in $L^2(D)$ at time $s\in I$ with $s<t$, solves the PDE locally in $(s,t)$, and evaluates the solution at target time $t$; see \cref{fig_transfer_operator} (bottom row) for an illustration. In this way, the transfer operator captures the decay behavior of solutions of the PDE in time.

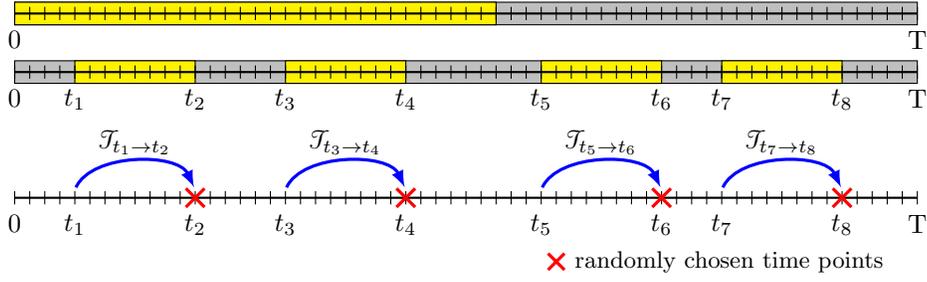
\begin{figure}
	\centering
	\begin{tikzpicture}
	\draw[fill=lightgray] (0,0.15) rectangle (12,-0.15);
	\draw[fill=yellow] (0,0.15) rectangle (6.4,-0.15);
	\draw[thick, -] (0,0) -- (12,0);
	\foreach \x in {0,0.2,...,12} \draw (\x cm, 2.5pt) -- (\x cm, -2.5pt); 
	\draw (0,0) node[below=3pt] {$0$};
	\draw (12,0) node[below=3pt] {T};
	\end{tikzpicture}
	\begin{tikzpicture}
	\draw[fill=lightgray] (0,0.15) rectangle (12,-0.15);
	\draw[fill=yellow] (0.8,0.15) rectangle (2.4,-0.15);
	\draw[fill=yellow] (3.6,0.15) rectangle (5.2,-0.15);
	\draw[fill=yellow] (7,0.15) rectangle (8.6,-0.15);
	\draw[fill=yellow] (9.4,0.15) rectangle (11,-0.15);	
	\draw[thick, -] (0,0) -- (12,0);
	\foreach \x in {0,0.2,...,12} \draw (\x cm, 2.5pt) -- (\x cm, -2.5pt); 
	\draw (0,0) node[below=3pt] {$0$};
	\draw (12,0) node[below=3pt] {T};
	\draw (0.8,0) node[below=3pt] {$t_1$};
	\draw (2.4,0) node[below=3pt] {$t_2$};
	\draw (3.6,0) node[below=3pt] {$t_3$};
	\draw (5.2,0) node[below=3pt] {$t_4$};
	\draw (7.0,0) node[below=3pt] {$t_5$};
	\draw (8.6,0) node[below=3pt] {$t_6$};
	\draw (9.4,0) node[below=3pt] {$t_7$};
	\draw (11,0) node[below=3pt] {$t_8$};
	\end{tikzpicture}
	\begin{tikzpicture}
	\draw[thick, -] (0,0) -- (12,0);
	\foreach \x in {0,0.2,...,12} \draw (\x cm, 2.5pt) -- (\x cm, -2.5pt); 
	\draw (0,0) node[below=3pt] {$0$};
	\draw (12,0) node[below=3pt] {T};
	\draw (0.8,0) node[below=3pt] {$t_1$};
	\draw (2.4,0) node[below=3pt] {$t_2$};
	\draw (3.6,0) node[below=3pt] {$t_3$};
	\draw (5.2,0) node[below=3pt] {$t_4$};
	\draw (7.0,0) node[below=3pt] {$t_5$};
	\draw (8.6,0) node[below=3pt] {$t_6$};
	\draw (9.4,0) node[below=3pt] {$t_7$};
	\draw (11,0) node[below=3pt] {$t_8$};
	\draw [arrow, bend angle = 65, bend left, very thick, blue] (0.75,0) to (2.45,0);
	\draw [arrow, bend angle = 65, bend left, very thick, blue] (3.55,0) to (5.25,0);
	\draw [arrow, bend angle = 65, bend left, very thick, blue] (6.95,0) to (8.65,0);
	\draw [arrow, bend angle = 65, bend left, very thick, blue] (9.35,0) to (11.05,0);
	\draw (1.6,0.45) node[above] {$\mathcal{T}_{t_1\rightarrow t_2} $}; 
	\draw (4.4,0.45) node[above] {$\mathcal{T}_{t_3\rightarrow t_4}$}; 
	\draw (7.8,0.45) node[above] {$\mathcal{T}_{t_5\rightarrow t_6}$};
	\draw (10.2,0.45) node[above] {$\mathcal{T}_{t_7\rightarrow t_8}$};
	\draw [red, very thick] plot [only marks,mark=x, mark options={scale=2.5}] coordinates{(2.4,0)};
	\draw [red, very thick] plot [only marks,mark=x, mark options={scale=2.5}] coordinates{(5.2,0)};
	\draw [red, very thick] plot [only marks,mark=x, mark options={scale=2.5}] coordinates{(8.6,0)};
	\draw [red, very thick] plot [only marks,mark=x, mark options={scale=2.5}] coordinates{(11,0)};
	\draw [red, very thick] plot [only marks,mark=x, mark options={scale=2.2}] coordinates{(7.2,-0.85)};
	\node[label=right:{\small randomly chosen time points}] at (7.2,-0.85) {};
	\end{tikzpicture}
	\caption{Computational budget in time: Sequential for POD (top row) vs. split and distributed for randomized approach proposed in this paper (middle row). Transfer operators corresponding to randomly chosen time points (bottom row).}
	\label{fig_transfer_operator}
\end{figure}

\begin{figure}
	\begin{tikzpicture}
	\begin{axis}[
	domain = 0:1, 
	xmin = 0, xmax = 1, ymin=0, ymax= 17, 
	samples = 100, 
	width=4.5cm, height= 3.4cm,
	xtick style = {white},
	ytick style = {white},
	label style={font=\footnotesize},
	tick label style={font=\scriptsize}
	]
	\addplot+[mark=none,thick, red] {-1500*(x-0.2)*(x-0.4)};
	\addplot+[mark=none,thick, blue] {-1000*(x-0.6)*(x-0.8)};
	\end{axis}
	\draw[->, thick] (0.1,-0.5) -- (2.9,-0.5);
	\node at (1.2,-0.5) [below right] {\footnotesize time};
	\node at (1.2,1.3) [above, red] {\footnotesize $f_1$};
	\node at (2.4,0.75) [above, blue] {\footnotesize $f_2$};
	\end{tikzpicture}
	\begin{tikzpicture}
	\node at (0,1.15) [above] {\large \bf{+}};
	\node at (0,0) {};
	\end{tikzpicture}
	\begin{tikzpicture}
	\draw[-] (0.1,1.5) -- (2.95,1.5);
	\draw[-, ultra thick, red] (0.8125,1.51) -- (1.24,1.51);
	\node at (1.025,1.51) [above, red] {\footnotesize $f_1$};
	\draw[-, ultra thick, blue] (1.81,1.51) -- (2.38,1.51);
	\node at (2.1,1.51) [above, blue] {\footnotesize $f_2$};
	\node at (-0.075,1.5) [below right] {\scriptsize 0\;\; 0.2\;\; 0.4\;\; 0.6\;\; 0.8\;\; 1};
	\draw[->, thick] (0.1,1) -- (2.95,1);
	\node at (1.2,1) [below right] {\footnotesize space};
	\node at (0,0) {};
	\end{tikzpicture}
	\hspace{0.02cm}
	\begin{tikzpicture}
	\node [single arrow,thick, draw,inner sep=0.04cm,minimum height=0.8cm, single arrow head extend=0.1cm,] at (0,1.4) {};
	\node at (0,0) {};
	\end{tikzpicture}
	\hspace{0.02cm}
	\begin{tikzpicture}
	\node (matrix) at (0,0)
	{\includegraphics[width=3cm]{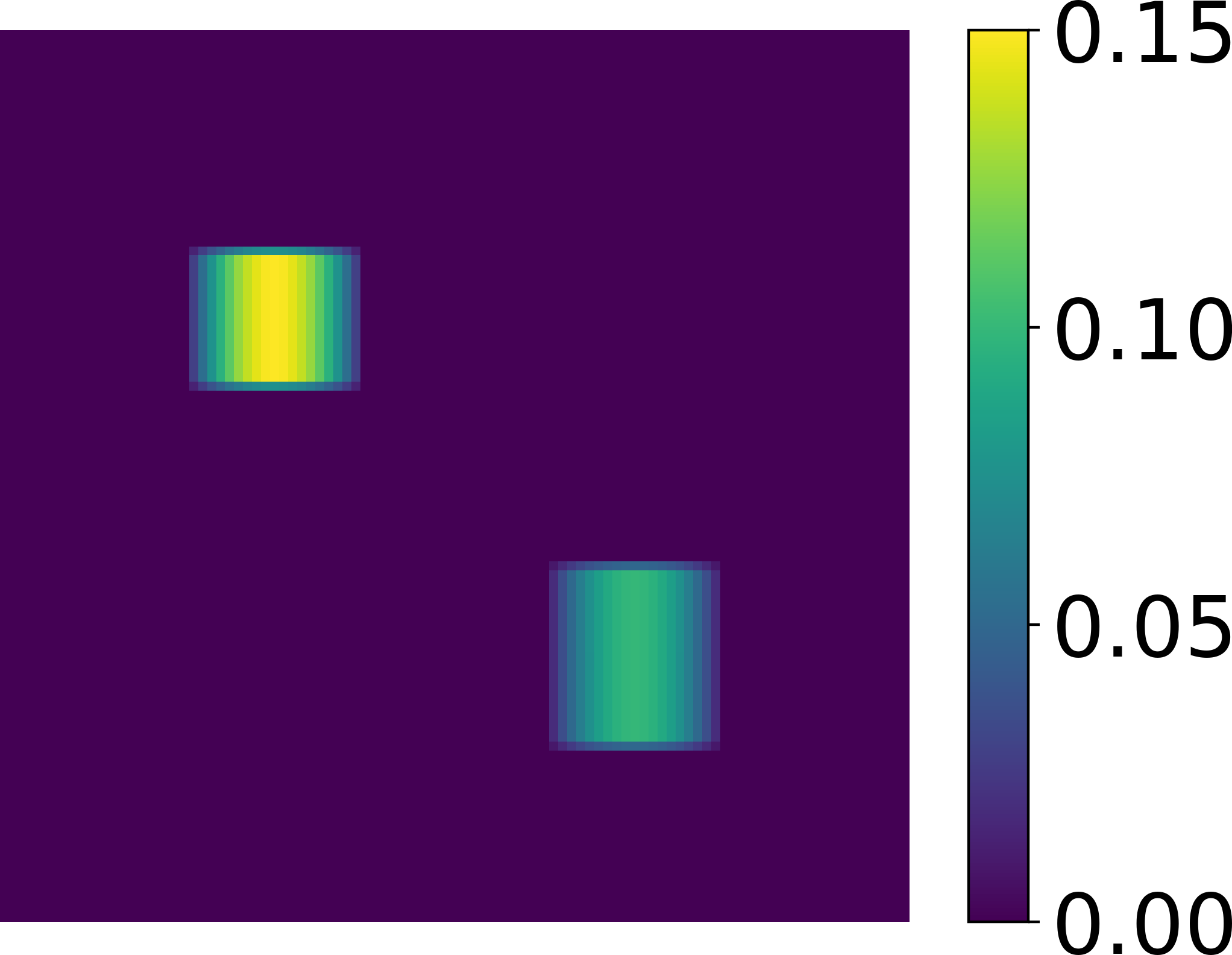}};
	\draw[->, thick] (-1.5,1.25) -- (0.7,1.25);
	\node at (-1.2,1.25) [above right] {\footnotesize time grid};
	\draw[->, thick] (-1.7,1.05) -- (-1.7,-1.05);
	\node at (-1.7,0.95) [above left, rotate=90] {\footnotesize spatial grid};
	\end{tikzpicture}
	\vspace{-0.1cm}\\
	\centering
	\begin{tikzpicture}
	\begin{axis}[
	width=4.8cm,
	height=3cm,
	xmin=0,
	xmax= 1,
	ymin=0,
	ymax=0.048,
	xtick style = {white},
	ytick style = {white},
	ytick = {0,0.02,0.04},
	yticklabel style = {white},
	yticklabels = {\textcolor{black}{0},\textcolor{black}{0.02},\textcolor{black}{0.04}},
	label style={font=\footnotesize},
	tick label style={font=\scriptsize}  
	]
	\addplot[solid, red, thick] table[x index=0, y index=1] {data_figures/1D_solutions_motivation.dat};
	\end{axis}
	\node [red, align = center] at (1.3,0.5) {\scriptsize solution\\ \scriptsize at $t=0.3$};
	\end{tikzpicture}
	\hspace{-0.5cm}
	\begin{tikzpicture}
	\begin{axis}[
	width=4.8cm,
	height=3cm,
	xmin=0,
	xmax= 1,
	ymin=0,
	ymax=0.048,
	xtick style = {white},
	ytick style = {white},
	yticklabel style = {white},
	label style={font=\footnotesize},
	tick label style={font=\scriptsize}  
	]
	\addplot[solid,line width = 2pt, cyan] table[x index=0, y index=2] {data_figures/1D_solutions_motivation.dat};
	\end{axis}
	\node [cyan, align = center] at (1.5,0.5) {\scriptsize solution\\ \scriptsize at $t=0.5$};
	\end{tikzpicture}
	\hspace{-0.5cm}
	\begin{tikzpicture}
	\begin{axis}[
	width=4.8cm,
	height=3cm,
	xmin=0,
	xmax= 1,
	ymin=0,
	ymax=0.048,
	xtick style = {white},
	ytick style = {white},
	yticklabel style = {white},
	label style={font=\footnotesize},
	tick label style={font=\scriptsize}  
	]
	\addplot[solid, blue, thick] table[x index=0, y index=3] {data_figures/1D_solutions_motivation.dat};
	\end{axis}
	\node [blue, align = center] at (2,0.4) {\scriptsize solution\\ \scriptsize at $t=0.7$};
	\end{tikzpicture}
	\caption{\footnotesize Data-dependent sampling based on $f$ helps to determine time points for detecting shapes of the solution. Right-hand side data functions $f_1$ and $f_2$ in time (top left) corresponding to spatially disjoint sources in one spatial dimension (top middle) and associated right-hand side data matrix (top right). Solution of heat equation for $f_1$, $f_2$, homogeneous initial and Dirichlet boundary conditions evaluated at different points in time (bottom).}
	\label{1D_motivation_plot}
\end{figure}

After discretization, say, with the FE method, the transfer operator can be represented by a matrix. According to the well-known Eckart-Young theorem the range of this matrix can be optimally approximated by its leading left singular vectors. While the \emph{discrete} transfer operator is trivially compact thanks to its finite rank, in the \emph{continuous} setting we need to prove compactness of the transfer operator to facilitate its singular value decomposition (SVD) via the Hilbert-Schmidt theorem  \cite[Theorem 8.94]{RR04}. Then, the space spanned by the leading left singular vectors provides an optimal approximation in the sense of Kolmogorov, meaning that it minimizes the approximation error among all linear spaces of the same dimension (see \cite[Theorem 2.2 in Chapter 4]{Pinkus85}). Moreover, in the elliptic setting it has been shown that the optimal local approximation spaces outperform other approaches (based on, for instance, Legendre-type functions or empirical modes) numerically (see \cite{SmePat16}).

To further facilitate an efficient parallel computation, we approximate the optimal spaces via random sampling \cite{BuhSme18,HaMaTr11}, i.e. we solve the PDE locally in time with random initial conditions, which results in a provably nearly optimal local approximation.

\section{Optimal local approximation spaces in time}
\label{sec_opt_spaces}

First, we introduce in \cref{subsection_optimal_spaces}, as a key new contribution, a transfer operator in time for which one can prove compactness and thus obtain local ansatz spaces in time, which are optimal in the sense of Kolmogorov for the approximation of the solution of $\cref{PDE}$ in a point of time.
Subsequently, we show how to compute an approximation of the optimal local spaces and discuss its practical realization via Krylov subspace methods and random sampling in \cref{matrix_representation}.

\subsection{Constructing optimal local approximation spaces via a transfer operator}
\label{subsection_optimal_spaces}

We first observe that for any local subinterval $(s,t)\subseteq I$ of the global time interval the solution $u$ of \cref{PDE} solves the PDE locally in time with (unknown) initial conditions given by $u(s,\cdot)\in L^2(D)$. Therefore, we consider all local solutions $u_\text{loc} \in \mathcal{V}\vert_{(s,t)\times  D}$ with arbitrary initial conditions $u_\text{loc}(s,\cdot) \in L^2(D)$ that satisfy
\begin{align}\label{local_PDE}
\partial_t u_\text{loc} + \mathcal{A}_\text{loc}\, u_\text{loc} = \mathcal{F}_\text{loc} \quad \text{in } (\mathcal{W}\vert_{(s,t)\times  D})^*.
\end{align}
Here, $\mathcal{A}_\text{loc}$ and $\mathcal{F}_\text{loc}$ denote the respective local operator and functional associated with $(s,t)\times D$.
All solutions of \cref{local_PDE} can be split into a function that solves \cref{local_PDE} for $\mathcal{F}_\text{loc}$ and homogeneous initial conditions and a function that solves \cref{local_PDE} for $\mathcal{F}_\text{loc}\equiv 0$ and arbitrary initial conditions. In the following, we will first address the case where $\mathcal{F}_\text{loc}\equiv 0$ and discuss the general case at the end of the subsection.

As we want to approximate the evaluation of the global solution at time point $t$, we consider the space $\HH_{t}$ of all solutions of \cref{local_PDE} evaluated at time $t$:
\begin{equation}\label{space_of_local_solutions}
\begin{split}
\HH_{t}:=\lb w(t,\cdot) \in L^2(D) \mid w \in \mathcal{V}\vert_{(s,t)\times  D} \text{ solves } \cref{local_PDE},\; w(s,\cdot) \in L^2(D),\; \mathcal{F}_\text{loc}\equiv 0 \,\rb.\hspace{-5pt}
\end{split}
\end{equation}
We equip the space $\HH_{t}$ with the $L^2(D)$-inner product and -norm. 

As one major contribution of this paper, we next introduce a transfer operator in time that can be proven to be compact and thus facilitates to derive optimal local ansatz spaces for the approximation of the solution space at time $t$ via its SVD (see \cref{fig_one_transfer_operator} for an illustration).\\

\vspace{-2pt}
\hspace{-15pt}
\begin{minipage}{0.6\textwidth}
		\begin{definition}
			For $s<t$, $s,t\in I$, the\\ transfer operator $\mathcal{T}_{s \rightarrow t}:L^2(D) \rightarrow \mathcal{H}_{t}$ is given by
			\begin{equation}\label{transfer_operator}
			\mathcal{T}_{s \rightarrow t}\, w(s,\cdot)=w(t,\cdot)
			\end{equation}
			 for $w\in \mathcal{V}\vert_{(s,t)\times  D}$ that solves \cref{local_PDE} for  $\mathcal{F}_\text{loc}\equiv 0$.
		\end{definition}
\end{minipage}
\hfill
\begin{minipage}{0.35\textwidth}
	\centering
	\begin{tikzpicture}
	\draw[thick, ->] (-0.2,0) -- (3.6,0);
	\foreach \x in {-0.2,0,...,3.4} \draw (\x cm, 2.5pt) -- (\x cm, -2.5pt); 
	\draw (0.8,0) node[below=3pt] {$s$};
	\draw (2.4,0) node[below=3pt] {$t$};
	\draw [arrow, bend angle = 65, bend left, very thick, blue] (0.75,0) to (2.45,0);
	\draw (1.6,0.5) node[above] {\large $\mathcal{T}_{s\rightarrow t} $}; 
	\end{tikzpicture}
	
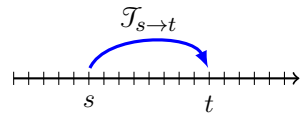
\captionof{figure}{Transfer operator $\mathcal{T}_{s\rightarrow t} $.}
	\label{fig_one_transfer_operator}
\end{minipage}
\vspace{2pt}

\begin{assumption} \label{assumption_compactness}
	We assume that the transfer operator $\mathcal{T}_{s \rightarrow t}$ introduced in \cref{transfer_operator} is compact for arbitrary $0\leq s < t \leq T$.
\end{assumption}

\begin{remark}\label{remark_compactness_ex}
		In \cref{supp_section_ex} we exemplarily prove compactness of the transfer operator for an advection-diffusion-reaction problem as introduced in \cref{sec_problem_setting}.
\end{remark}

Compactness of the transfer operator guarantees the existence of its SVD via the Hilbert-Schmidt theorem \cite[Theorem 8.94]{RR04}. Similar to \cite{BabLip11,SchSme20,SmePat16} it can then be shown that the leading left singular vectors of $\mathcal{T}_{s \rightarrow t}$ span an optimal approximation space in the local solution space $\mathcal{H}_{t}$. Here, we use the concept of optimality in the sense of Kolmogorov \cite{Kol36}: A subspace $\mathcal{H}^{n}_{t} \subset \mathcal{H}_{t}$ of dimension at most $n$ for which holds $
d_{n}(\mathcal{T}_{s \rightarrow t}( L^2(D));\mathcal{H}_{t}) =  \|\mathcal{T}_{s \rightarrow t} - \mathcal{P}_{\mathcal{H}^n_t}\mathcal{T}_{s \rightarrow t} \|$
is called an optimal subspace for $d_{n}(\mathcal{T}_{s \rightarrow t}( L^2(D));\mathcal{H}_{t})$, where the Kolmogorov $n$-width $d_{n}(\mathcal{T}_{s\rightarrow t}( L^2(D));\mathcal{H}_{t})$ is defined as $
d_{n}(\mathcal{T}_{s\rightarrow t}( L^2(D));\mathcal{H}_{t}) :=\inf_{\mathcal{H}_{t}^{n}\subset \mathcal{H}_{t};\;\dim(\mathcal{H}_{t}^{n})=n} \|\mathcal{T}_{s \rightarrow t} - \mathcal{P}_{\mathcal{H}^n_t}\mathcal{T}_{s \rightarrow t} \|$ and $\mathcal{P}_{\mathcal{H}^n_t}$ denotes the orthogonal projection onto ${\mathcal{H}^n_t}$.

\begin{theorem}[Optimal local approximation spaces in time]\label{optimal_spaces}
	Let $\sigma^{(i)}_{s\rightarrow t} \in \R^+$ and $\varphi^{(i)}_{s\rightarrow t} \in \HH_{t}$, $i=1,\ldots,\infty$, denote the singular values and left singular vectors of the transfer operator $\mathcal{T}_{s \rightarrow t}$ defined in \cref{transfer_operator}. Then the optimal approximation space for $d_n(\mathcal{T}_{s \rightarrow t} ( L^2(D));\HH_{t})$ is given by
	\begin{align}\label{eq_optimal_space}
	\HH^n_t:= \spann \lb \varphi^{(1)}_{s\rightarrow t},\ldots,\varphi^{(n)}_{s\rightarrow t}\rb
	\end{align}
	and the Kolmogorov n-width satisfies
	\begin{align*}
	&\;d_n(\mathcal{T}_{s \rightarrow t} ( L^2(D));\HH_{t})\\
	= &\sup_{\psi \in  L^2(D)} \inf_{\zeta \in \mathcal{H}_{t}^{n}} \frac{\|\mathcal{T}_{s \rightarrow t}\psi - \zeta\|_{L^2(D)}}{\|\psi\|_{L^2(D)}} = \|\mathcal{T}_{s \rightarrow t} - \mathcal{P}_{\mathcal{H}^n_t}\mathcal{T}_{s \rightarrow t} \| = \sigma^{(n+1)}_{s\rightarrow t}.
	\end{align*}
\end{theorem}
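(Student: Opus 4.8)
The plan is to reduce the statement to the classical singular-value characterisation of the Kolmogorov $n$-width of the image of a compact operator, in the spirit of \cite[Theorem~2.2 in Chapter~4]{Pinkus85} and of the arguments in \cite{BabLip11,SchSme20,SmePat16}. First I would invoke \cref{assumption_compactness} together with the Hilbert--Schmidt theorem \cite[Theorem~8.94]{RR04} to write the singular value decomposition
\[
\mathcal{T}_{s \rightarrow t}\,\psi=\sum_{i\geq 1}\sigma^{(i)}_{s\rightarrow t}\,(\psi,\chi^{(i)}_{s\rightarrow t})_{L^2(D)}\,\varphi^{(i)}_{s\rightarrow t},\qquad \psi\in L^2(D),
\]
with right singular vectors $\{\chi^{(i)}_{s\rightarrow t}\}_i$ orthonormal in $L^2(D)$, left singular vectors $\{\varphi^{(i)}_{s\rightarrow t}\}_i$ orthonormal in $\HH_t$, and $\sigma^{(1)}_{s\rightarrow t}\geq\sigma^{(2)}_{s\rightarrow t}\geq\cdots>0$; the theorem also supplies density of $\spann\{\varphi^{(i)}_{s\rightarrow t}\}_i$ in $\HH_t$. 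Since every competing subspace is finite-dimensional, hence closed in $L^2(D)$, the orthogonal projection realises the best approximation, so that $\inf_{\zeta\in\mathcal{H}^n_t}\|\mathcal{T}_{s\rightarrow t}\psi-\zeta\|_{L^2(D)}=\|(\mathcal{T}_{s\rightarrow t}-\mathcal{P}_{\HH^n_t}\mathcal{T}_{s\rightarrow t})\psi\|_{L^2(D)}$; taking the supremum over $\psi$ gives the middle equality in the statement, and it then remains only to prove the two-sided estimate $d_n(\mathcal{T}_{s\rightarrow t}(L^2(D));\HH_t)=\sigma^{(n+1)}_{s\rightarrow t}$.

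For the upper bound I would take $\HH^n_t=\spann\{\varphi^{(1)}_{s\rightarrow t},\ldots,\varphi^{(n)}_{s\rightarrow t}\}$, observe from the SVD and orthonormality that $(\mathcal{T}_{s\rightarrow t}-\mathcal{P}_{\HH^n_t}\mathcal{T}_{s\rightarrow t})\psi=\sum_{i\geq n+1}\sigma^{(i)}_{s\rightarrow t}(\psi,\chi^{(i)}_{s\rightarrow t})_{L^2(D)}\varphi^{(i)}_{s\rightarrow t}$, and use Parseval to conclude $\|\mathcal{T}_{s\rightarrow t}-\mathcal{P}_{\HH^n_t}\mathcal{T}_{s\rightarrow t}\|=\sigma^{(n+1)}_{s\rightarrow t}$, with equality attained at $\psi=\chi^{(n+1)}_{s\rightarrow t}$; hence $d_n\leq\sigma^{(n+1)}_{s\rightarrow t}$. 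For the matching lower bound, given an arbitrary subspace $\widetilde{\HH}^n_t\subset\HH_t$ with $\dim\widetilde{\HH}^n_t\leq n$, I would run the standard dimension-counting argument on the $(n+1)$-dimensional space $V:=\spann\{\chi^{(1)}_{s\rightarrow t},\ldots,\chi^{(n+1)}_{s\rightarrow t}\}\subset L^2(D)$: the linear map $\psi\mapsto\mathcal{P}_{\widetilde{\HH}^n_t}\mathcal{T}_{s\rightarrow t}\psi$ takes $V$ into a space of dimension at most $n$, so it has a nontrivial kernel in $V$, i.e. there is a unit vector $\psi^\ast=\sum_{i=1}^{n+1}c_i\chi^{(i)}_{s\rightarrow t}$ with $\mathcal{P}_{\widetilde{\HH}^n_t}\mathcal{T}_{s\rightarrow t}\psi^\ast=0$; then $\|(\mathcal{T}_{s\rightarrow t}-\mathcal{P}_{\widetilde{\HH}^n_t}\mathcal{T}_{s\rightarrow t})\psi^\ast\|_{L^2(D)}=\|\mathcal{T}_{s\rightarrow t}\psi^\ast\|_{L^2(D)}=\bigl(\sum_{i=1}^{n+1}(\sigma^{(i)}_{s\rightarrow t})^2|c_i|^2\bigr)^{1/2}\geq\sigma^{(n+1)}_{s\rightarrow t}$, because $\sigma^{(i)}_{s\rightarrow t}\geq\sigma^{(n+1)}_{s\rightarrow t}$ for $i\leq n+1$. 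Taking the supremum over $\psi$ and the infimum over $\widetilde{\HH}^n_t$ yields $d_n\geq\sigma^{(n+1)}_{s\rightarrow t}$; combining the two estimates proves both the value of the $n$-width and the optimality of $\HH^n_t$.

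I expect the only genuinely delicate point to be the handling of $\HH_t$: as the range of a compact operator with (generically) infinitely many nonzero singular values it need not be closed in $L^2(D)$, so I would be careful to record that it is nonetheless a pre-Hilbert space containing all the $\varphi^{(i)}_{s\rightarrow t}$ and, by the Hilbert--Schmidt theorem, their dense span, which is exactly what is needed for the finite-dimensional orthogonal projections $\mathcal{P}_{\HH^n_t}$, $\mathcal{P}_{\widetilde{\HH}^n_t}$ and the best-approximation property to be meaningful. A minor additional case is when $\mathcal{T}_{s\rightarrow t}$ has finite rank $r$: for $n\geq r$ both sides of the identity vanish and the dimension-counting step is vacuous, so the statement is to be read with the convention $\sigma^{(n+1)}_{s\rightarrow t}=0$. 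Apart from these bookkeeping points the argument is the classical one, and as an alternative one could simply verify the hypotheses of \cite[Theorem~2.2 in Chapter~4]{Pinkus85} and quote it, compactness from \cref{assumption_compactness} being precisely the required ingredient.
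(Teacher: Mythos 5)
Your proposal is correct and matches the paper's approach: the paper's proof consists precisely of invoking the Hilbert--Schmidt theorem \cite[Theorem 8.94]{RR04} for the SVD and then citing \cite[Theorem 2.2 in Chapter 4]{Pinkus85} for the $n$-width characterisation, which is exactly the alternative you note at the end. Your written-out upper bound via $\mathcal{P}_{\HH^n_t}$ and lower bound via dimension counting on $\spann\{\chi^{(1)}_{s\rightarrow t},\ldots,\chi^{(n+1)}_{s\rightarrow t}\}$ is simply the standard proof of the cited Pinkus result, so there is no substantive difference.
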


\begin{proof}
	The assertion directly follows from the Hilbert-Schmidt theorem \cite[Theorem 8.94]{RR04} and \cite[Theorem 2.2 in Chapter 4]{Pinkus85}.
\end{proof}

	\begin{remark}[Discussion of \cref{optimal_spaces}]
		While a compact transfer operator has already been used to construct optimal spatially local approximation spaces for elliptic \cite{BabLip11,MaSch21,SmePat16} and parabolic \cite{SchSme20} problems, the key new contribution in this paper is the introduction of a compact transfer operator in time that enables to generate an optimal ansatz space for the approximation of the solution space at a point of time. Moreover, we provide, to the best of our knowledge for the first time, a priori error analysis for the local approximation error in time (cf. also \cref{probabilistic_a_priori}). We conjecture that it is also possible to derive a global error bound by using ideas as employed in \cite{BonCohDeV17}.
	\end{remark}

To address non-homogeneous data $\mathcal{F}_\text{loc}$, we define $u^{f}_{s\rightarrow t}\in \mathcal{V}\vert_{(s,t)\times  D}$ as the solution of \cref{local_PDE} with homogeneous initial conditions at time $s$. Finally, the optimal local approximation space at time $t$ is given by
\begin{align}
\label{optimal_space_data}
\mathcal{H}_t^{n,\text{data}}:= \spann \lb \varphi^{(1)}_{s\rightarrow t},\ldots,\varphi^{(n)}_{s\rightarrow t}, u^{f}_{s\rightarrow t}(t,\cdot)\rb.
\end{align}

\subsection{Approximation of the optimal local approximation spaces in time}
\label{matrix_representation}
In this subsection we describe how to compute an approximation of the optimal local space $\mathcal{H}_t^{n,\text{data}}$ in \cref{optimal_space_data} for the example of the advection-diffusion-reaction problem (cf. \cref{sec_problem_setting}). In the following, we use the notation introduced in \cref{sec_problem_setting} (see e.g. \cref{FE_matrices}). Assuming that $s=t_i$ and $t=t_j$ for $0\leq i < j \leq N_I-1$ and given a discrete version $\mathbf{u}_{\text{loc},i}\in\R^{N_D}$ of the arbitrary local initial values $u_\text{loc}(s,\cdot)$, we compute a local solution $\mathbf{u}_{\text{loc},l}\in\R^{N_D}$ at time point $t_l$ for $l={i+1},\ldots,j$ via (cf. \cref{eq_time_stepping,local_PDE})
\begin{align} \label{eq_local_time_stepping}
\mathbf{u}_{\text{loc},l} & = (\mathbf{M}+\Delta_T \mathbf{A}_l)^{-1} (\Delta_T \mathbf{F}_l + \mathbf{M} \mathbf{u}_{\text{loc},l-1}).
\end{align}
As the discrete transfer operator $T_{t_i\rightarrow t_j}$ acts on the space of local solutions with $\mathbf{F}_l = 0$ ($i\leq l \leq j$), the matrix version $\mathbf{T}_{t_i\rightarrow t_j} \in \R^{N_D \times N_D}$ of $T_{t_i\rightarrow t_j}$ is given by (cf. \cref{transfer_operator})
\begin{align}\label{matrix_trans_op}
\mathbf{T}_{t_i \rightarrow t_j} \,\boldsymbol{\xi} = [(\mathbf{M}\mspace{-3mu}+\mspace{-3mu}\Delta_T \mathbf{A}_j)^{-1}  \mathbf{M}]\, [(\mathbf{M}\mspace{-3mu}+\mspace{-3mu}\Delta_T \mathbf{A}_{j-1})^{-1}  \mathbf{M}] \ldots [(\mathbf{M}\mspace{-3mu}+\mspace{-3mu}\Delta_T \mathbf{A}_{i+1})^{-1} \mathbf{M}] \,\boldsymbol{\xi}.\mspace{-10mu}
\end{align}
Finally, we compute the $n$ leading left singular vectors $\boldsymbol{\varphi}^{(1)}_{t_i \rightarrow t_j},\ldots,\boldsymbol{\varphi}^{(n)}_{t_i \rightarrow t_j} \in \R^{N_D}$ of $\mathbf{T}_{t_i \rightarrow t_j}$ to approximate the optimal local space $\mathcal{H}^n_{t_j}$ (cf. \cref{eq_optimal_space}) and define
\begin{align}
\label{optimal_discrete_space}
H^n_{t_j} := \spann \lb {\varphi}^{(1)}_{t_i \rightarrow t_j},\ldots,{\varphi}^{(n)}_{t_i \rightarrow t_j} \rb,
\end{align}
where ${\varphi}^{(k)}_{t_i \rightarrow t_j}$ is the FE function corresponding to the coefficient vector $\boldsymbol{\varphi}^{(k)}_{t_i \rightarrow t_j}$ for $1\leq k \leq n$. Consequently, we have that $\Vert T_{t_i\rightarrow t_j} - \proj_{H^n_{t_j}} T_{t_i\rightarrow t_j}\Vert = \sigma^{(n+1)}_{t_i \rightarrow t_j}$ (Eckart-Young theorem e.g. in \cite{GV13}), where $\sigma^{(n+1)}_{t_i \rightarrow t_j}$ is the $n+1$-st singular value of $T_{t_i\rightarrow t_j}$ (listed in non-increasing order of magnitude) and $\proj_{H^n_{t_j}}$ denotes the orthogonal projection onto $H^n_{t_j}$ (cf. \cref{optimal_spaces}). We use the same notation for continuous and discrete singular values and vectors expecting that the respective meaning is clear from the context.

To address non-homogeneous data $\mathbf{F}_l$ ($l={i+1},\ldots,j$), we compute the solution of \cref{eq_local_time_stepping} for homogeneous initial conditions $\mathbf{u}^\mathbf{F}_{\text{loc},i}\equiv 0$, add the resulting solution $\mathbf{u}^\mathbf{F}_{\text{loc},j}$ at time $t_j$ to the FE basis, and define $H^{n,\text{data}}_{t_j}$ as the span of the FE functions associated with the coefficient vectors $\boldsymbol{\varphi}^{(1)}_{t_i \rightarrow t_j},\ldots,\boldsymbol{\varphi}^{(n)}_{t_i \rightarrow t_j}, \mathbf{u}^\mathbf{F}_{\text{loc},j}$.

\begin{remark}[Comparison of computational approaches to approximate the optimal local spaces]
	\label{remark_comparison_comput_appr}
	In practice, the left singular vectors of $\mathbf{T}_{t_i \rightarrow t_j}$ can be computed via the eigenvectors of $\mathbf{T}_{t_i \rightarrow t_j}^\top \mathbf{T}_{t_i \rightarrow t_j}$.
	A direct computation of the optimal local space would therefore require to compute the evaluation of local solutions at time $t_j$ for all $N_D$ basis functions that span the local solution space at time $t_i$, thus evaluate the transfer operator $N_D$ times, and solve a dense generalized eigenproblem of dimension $N_D \times N_D$. As this becomes infeasible for large $N_D$, one would in general use Krylov subspace or randomized methods for the approximation of the optimal local spaces \cite{BuhSme18,HaMaTr11,MaRoTy11}.
	
	In Krylov subspace methods, the application of the transfer operator \cref{matrix_trans_op} would be implicitly passed to the eigenvalue solver. To calculate the $m$ leading eigenvectors of $\mathbf{T}_{t_i \rightarrow t_j}^\top \mathbf{T}_{t_i \rightarrow t_j}$ using, for instance, the implicitly restarted Arnoldi method (IRAM) from \cite{Lehoucq1998}, $\mathcal{O}(m)$ evaluations of $\mathbf{T}_{t_i \rightarrow t_j}$ and $\mathbf{T}_{t_i \rightarrow t_j}^\top$ are required in every iteration. While Krylov subspace methods can lead to more accurate approximations especially for slowly decaying singular values, randomized methods have the main advantage that they are inherently stable and amenable to parallelization \cite{HaMaTr11,Woo14}.
	
	To approximate the space spanned by the $m$ leading left singular vectors of the transfer operator via random sampling as described in \cref{subsec_quasi-optimal_spaces} in more detail, $m+s$ evaluations of the transfer operator are required. As randomized methods can outperform Krylov subspace methods even in the sequential setting (see, e.g., \cite{BuhSme18}), they are thus an appealing choice for the approximation of the optimal local spaces. For a more in-depth comparison of Krylov subspace and randomized methods, we refer, for instance, to \cite[section 6]{HaMaTr11}.
\end{remark}

\begin{remark}[Computational complexity] \label{remark_local_comp_complexity}
	The computational complexity of the local basis construction is clearly dominated by the evaluation of $\mathbf{T}_{t_i \rightarrow t_j}$ or $\mathbf{T}_{t_i \rightarrow t_j}^\top$ and thus the numerical solution of the local PDE, where we employ a sparse direct solver. For a standard FE discretization in two or three dimensions the factorizations of $(\mathbf{M} + \Delta_T \mathbf{A}_{i+1}),\ldots, (\mathbf{M} + \Delta_T \mathbf{A}_{j})\in\R^{N_D\times N_D}$ can be computed in $\mathcal{O}((j-i) N_D^{3/2})$ or $\mathcal{O}((j-i) N_D^{2})$ work \cite{GV13}. After factorizing, the computational complexity for each local solution trajectory of the PDE is $\mathcal{O}((j-i) N_D \log(N_D))$ or $\mathcal{O}((j-i) N_D^{4/3})$ \cite{GV13}. 
\end{remark}

\section{Generating reduced ansatz functions in parallel in time} \label{sec_random_spaces}

In this section we derive a randomized algorithm that provides an approximation to the \emph{discrete} solution of problem \cref{PDE} by exploiting techniques from randomized NLA \cite{BuhSme18,DrMaMu08,FrKaVe04,DerMah21,DriMah16,HaMaTr11} (see also motivation in \cref{subsection_motivation}). For this purpose, we first sketch in \cref{subsec_quasi-optimal_spaces} how a nearly optimal ansatz space for the approximation of the discrete solution in a single point of time can be generated in an efficient and parallel manner via random sampling. Subsequently, we present in \cref{subsec_algorithm} a randomized algorithm that constructs a reduced basis by performing several simulations of the PDE for only few time steps in parallel. The local simulations start at different time points that are randomly drawn from a data-dependent probability distribution whose choice is discussed in \cref{subsec_choice_prob}; random initial conditions are prescribed. The proposed algorithm thus enables to split and distribute the available computational budget over the entire time interval, facilitates an embarrassingly parallel computation, and is therefore well-suited to be used on modern computer architectures.

\subsection{Approximating the range of one transfer operator via random sampling}\label{subsec_quasi-optimal_spaces}

To motivate the random sampling strategy (cf. \cite{HaMaTr11}), suppose that we want to approximate the range of a large matrix of rank $m$. By multiplying the matrix with $m$ random vectors we draw $m$ random samples from the range of the matrix at high probability. As these $m$ samples are likely linearly independent thanks to the randomness, they span the range of the matrix with high probability. In cases where the rank of the matrix is unknown and to compensate for the fact that any of the drawn random vectors might lie in the null space of the matrix, we draw a small number $s$ of additional random vectors to ensure that the resulting $m+s$ samples most likely span the targeted $m$-dimensional subspace.

To construct a suitable approximation of the \emph{discrete} optimal local ansatz space $H^m_{t_j}$ \cref{optimal_discrete_space}, we thus prescribe $n=m+s$ random initial conditions at the local starting time point $t_i<t_j$, where the coefficient vectors of the corresponding FE functions are mutually independent normal random vectors $\mathbf{r}_1,\ldots,\mathbf{r}_n \sim N(\boldsymbol{0},(\mathbf{A}_{i}^\top \mathbf{A}_{i})^{-1})$. Recall that $\mathbf{A}_{i}$ denotes the stiffness matrix corresponding to time point $t_i$ \cref{FE_matrices}. Moreover, the oversampling parameter $s$ is typically not greater than $2$ or $3$ (cf. \cite{HaMaTr11,BuhSme18}).

Then, the matrix version of the transfer operator $\mathbf{T}_{t_i \rightarrow t_j}$ \cref{matrix_trans_op} is applied to the random vectors, meaning we solve the PDE locally on the time interval $(t_i,t_j)$ with initial conditions given by $\mathbf{r}_1,\ldots,\mathbf{r}_n$ and evaluate the solutions at the local end time $t_j$. We highlight that the computation of the $n$ local solutions is embarrassingly parallel. The space $H^n_{t_j,\text{rand}}$ is then spanned by the $n$ resulting local solutions evaluated at time $t_j$.

The following probabilistic a priori error bound shows that the space $H^n_{t_j,\text{rand}}$ yields an approximation that converges at a nearly optimal rate which is only slightly worse than the rate $\sigma^{(m+1)}_{t_i \rightarrow t_j}$ achieved by the optimal space $H^m_{t_j}$ (cf. \cref{matrix_representation}).

\begin{proposition}\label{probabilistic_a_priori} \textnormal{(Probabilistic a priori error bound, \cite[Proposition 3.2]{BuhSme18} based on \cite[Theorem 10.6]{HaMaTr11})}. 
	Let $\lambda_{min}^\mathbf{M}$ and $\lambda_{max}^\mathbf{M}$ denote the smallest and largest eigenvalue of $\mathbf{M}$ (cf. \cref{FE_matrices}). Moreover, we denote by $\sigma_{min}^{\mathbf{A}_i}$ and $\sigma_{max}^{\mathbf{A}_i}$ the smallest and largest singular value of $\mathbf{A}_i$. Then, for $n\geq 4$ it holds that
	\begin{align}\label{random_a_priori_result}
	\begin{split}
	&\mathbb{E}(\Vert T_{t_i\rightarrow t_j} - \proj_{H^n_{t_j,\text{rand}}} T_{t_i\rightarrow t_j}\Vert)\\ &\leq \frac{\sigma_{max}^{\mathbf{A}_i}\,\lambda_{max}^\mathbf{M}}{\sigma_{min}^{\mathbf{A}_i}\,\lambda_{min}^\mathbf{M}}  \min_{\substack{m+s=n\\ m\geq2,s\geq2}} \bigg[ \bigg(1 + \bigg(\frac{m}{s-1}\bigg)^{\mspace{-5mu}1/2}\,\bigg)\,\sigma^{(m+1)}_{t_i\rightarrow t_j} + \frac{e\sqrt{n}}{s} \bigg( \sum_{l>m} (\sigma^{(l)}_{t_i\rightarrow t_j})^2\bigg)^{\mspace{-5mu}1/2} \,\bigg].\hspace*{-0.2cm}
	\end{split}
	\end{align}
\end{proposition}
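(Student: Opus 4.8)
The plan is to obtain \cref{random_a_priori_result} by reducing to the randomized range-finder analysis of \cite[Theorem 10.6]{HaMaTr11}, exactly along the lines of \cite[Proposition 3.2]{BuhSme18}. The nontrivial point is that \cite[Theorem 10.6]{HaMaTr11} is formulated for a \emph{standard} Gaussian test matrix and the Euclidean spectral norm, whereas here both the source and the target of the transfer operator carry the $L^2(D)$-inner product (represented by $\mathbf M$) and the random initial conditions carry the covariance $(\mathbf A_i^\top\mathbf A_i)^{-1}$ rather than $\mathbf M^{-1}$. So the first move is to transport everything into the standard Euclidean setting and keep track of the weights that are thereby displaced.

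Concretely, I would fix a symmetric factor $\mathbf M^{1/2}$ of the mass matrix and pass to the variables $\hat{\mathbf v} := \mathbf M^{1/2}\mathbf v$, in which the $L^2(D)$-norm of an FE function equals the Euclidean norm of $\hat{\mathbf v}$. Under this change of variables the discrete transfer operator becomes $\hat{\mathbf T} := \mathbf M^{1/2}\,\mathbf T_{t_i\rightarrow t_j}\,\mathbf M^{-1/2}$, whose Euclidean singular values are by construction the $\sigma^{(l)}_{t_i\rightarrow t_j}$ (these being defined as the singular values of $T_{t_i\rightarrow t_j}$ between the $L^2(D)$-spaces), and the $L^2(D)$-orthogonal projection onto $H^n_{t_j,\mathrm{rand}}$ becomes the Euclidean orthogonal projection $\hat P$ onto the transformed space. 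Since $\mathbf r_k = \mathbf A_i^{-1}\mathbf g_k$ with $\mathbf g_k\sim N(\mathbf 0,\mathbf I)$ reproduces the prescribed covariance, the transformed random samples are precisely the columns of $\mathbf B\mathbf G$ with $\mathbf B := \hat{\mathbf T}\,\mathbf M^{1/2}\mathbf A_i^{-1}$ and $\mathbf G = [\mathbf g_1\,\cdots\,\mathbf g_n]$ a standard Gaussian matrix, so that the transformed $H^n_{t_j,\mathrm{rand}}$ is $\mathrm{range}(\mathbf B\mathbf G)$ and the quantity we must bound is $\mathbb E\|(\mathbf I-\hat P)\hat{\mathbf T}\|_2$.

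Now \cite[Theorem 10.6]{HaMaTr11} applies to $\mathbf B$ with test matrix $\mathbf G$ (its dimension and oversampling hypotheses $m\ge2$, $s\ge2$, $m+s=n\le N_D$ being satisfiable for at least one splitting whenever $n\ge4$), giving, for each admissible $m+s=n$, a bound on $\mathbb E\|(\mathbf I-\hat P)\mathbf B\|_2$ by the bracketed expression in \cref{random_a_priori_result} with $\sigma^{(l)}_{t_i\rightarrow t_j}$ replaced by $\sigma_l(\mathbf B)$. It then remains to translate this back: writing $\hat{\mathbf T} = \mathbf B\,\mathbf A_i\mathbf M^{-1/2}$ and using submultiplicativity of the spectral norm gives $\|(\mathbf I-\hat P)\hat{\mathbf T}\|_2\le\|(\mathbf I-\hat P)\mathbf B\|_2\,\|\mathbf A_i\mathbf M^{-1/2}\|_2$, while $\sigma_l(\mathbf B)\le\sigma_l(\hat{\mathbf T})\,\|\mathbf M^{1/2}\mathbf A_i^{-1}\|_2 = \sigma^{(l)}_{t_i\rightarrow t_j}\,\|\mathbf M^{1/2}\mathbf A_i^{-1}\|_2$; estimating $\|\mathbf A_i\mathbf M^{-1/2}\|_2$ and $\|\mathbf M^{1/2}\mathbf A_i^{-1}\|_2$ through $\sigma_{max}^{\mathbf A_i},\sigma_{min}^{\mathbf A_i}$ and $\lambda_{max}^{\mathbf M},\lambda_{min}^{\mathbf M}$ produces the stated condition-number prefactor, and taking the minimum over all admissible splittings $m+s=n$ gives \cref{random_a_priori_result}.

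I expect the main difficulty to be careful bookkeeping rather than any deep new idea: keeping straight which of the three inner products (Euclidean on coefficients, the $L^2(D)$-inner product, and the $(\mathbf A_i^\top\mathbf A_i)$-weighting carried by the random draws) is in force at each step, checking that the weighted orthogonal projection really does transform into the Euclidean one under the $\mathbf M^{1/2}$-substitution, and verifying that the hypotheses of \cite[Theorem 10.6]{HaMaTr11} — in particular the dimension constraints and $n\ge4$ — are genuinely met so that the minimum in \cref{random_a_priori_result} ranges over a nonempty index set. One should also confirm that $\mathbf M+\Delta_T\mathbf A_l$ is invertible for $i<l\le j$ under the assumptions of \cref{sec_problem_setting} (its symmetric part is positive definite because $c-\tfrac{1}{2}\nabla\cdot b\ge0$), so that $\mathbf T_{t_i\rightarrow t_j}$, and hence $\mathbf B$, is well defined.
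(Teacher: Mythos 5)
Your argument is correct and takes essentially the same route as the paper, whose proof simply invokes \cite[Proposition 3.2]{BuhSme18} (itself a weighted-inner-product reduction to \cite[Theorem 10.6]{HaMaTr11} of exactly the kind you carry out) together with the Courant minimax principle for the norm-equivalence bookkeeping that you perform via $\sigma_l(XY)\leq\sigma_l(X)\Vert Y\Vert_2$ and submultiplicativity. Your explicit tracking of $\mathbf{M}^{1/2}$ and $\mathbf{A}_i^{-1}$ in fact produces the prefactor with $(\lambda_{max}^{\mathbf{M}}/\lambda_{min}^{\mathbf{M}})^{1/2}$ in place of $\lambda_{max}^{\mathbf{M}}/\lambda_{min}^{\mathbf{M}}$, which is at least as sharp as the stated bound and therefore implies it.
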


\begin{proof}
	By applying the Courant minimax principle the result follows directly from \cite[Proposition $3.2$]{BuhSme18}, which is based on \cite[Theorem 10.6]{HaMaTr11}. 
\end{proof}

As we can hope that the transfer operator has fast decaying singular values, the square root of the sum of squared singular values in the last term of \cref{random_a_priori_result} behaves often roughly as $\sigma^{(m+1)}_{t_i\rightarrow t_j}$. Hence, the error bound in  \cref{random_a_priori_result} decays for increasing $n$ if the singular values decay faster than $n^{-1/2}$. This is a valid assumption as the singular values often decay exponentially as can be seen in the numerical experiments (see, for instance, \cref{figure_stove_transfer_singular_vals}). 
For further details we refer to \cite{BuhSme18} where methods from randomized linear algebra \cite{HaMaTr11} have been used to approximate the optimal local approximation spaces in the elliptic setting.

\subsection{Randomized reduced basis generation algorithm}
\label{subsec_algorithm}

\cref{randomized_basis_generation} summarizes the embarrassingly parallel randomized basis generation. To provide a good approximation of the discrete solution of problem \cref{PDE}, we propose to randomly choose $\nrand\in \N$ time points in $\lb t_0,\ldots, t_{N_I-1}\rb$ according to the probability distribution $p$ that may be based on the time-dependent data functions of the PDE (see \cref{subsec_choice_prob} for details on the choice of $p$). Recall that the time interval $I$ is discretized via $N_I\in\N$ time points $0=t_0\leq ... \leq t_{N_I-1}=T$. To construct suitable ansatz functions in the chosen time points, we then apply the corresponding transfer operators to random initial conditions (cf. \cref{subsec_quasi-optimal_spaces}) as illustrated in \cref{fig_multiple_operators}. 

In detail, for each starting time point $t_i$, we draw in line \ref{draw_random_vector} a Gaussian random vector $\boldsymbol{\mathrm{rand\_{u_0}}}\sim N(\mathbf{0},(\mathbf{A}_i^\top \mathbf{A}_i)^{-1})$, where $\mathbf{A}_i$ is the stiffness matrix introduced in \cref{FE_matrices}. In line \ref{compute_snapshots}, we then compute the local solution of the PDE with initial condition $\boldsymbol{\mathrm{rand\_u_0}}$ for the respective local time interval employing the time stepping scheme \texttt{t\_steps} that is suitably chosen by the user.

Subsequently, we add the resulting local solution trajectories evaluated at the last $\nt-k+1$ $(k\leq \nt)$ time points to the snapshot matrix $\mathbf{S}$ in line \ref{collect_snapshots}. By choosing the parameter $k$ smaller than $\nt$, we sample from the ranges of multiple transfer operators simultaneously (see \cref{fig_multiple_operators} for an illustration), which often results in an improved approximation accuracy as the numerical experiments in \cref{numerical_experiments} show. For some guidance on the choice of $k$ and $\nt$ we refer to \cref{guidance_parameters}. Moreover, we highlight that the computations in lines \ref{loop} to \ref{add_initials} are embarrassingly parallel.

As the local PDEs are solved for $\nt (\ll N_{I})$ time steps, we discard any chosen time point smaller than or equal to $t_{\nt}$ in line \ref{remove_entry} and add the evolution of the initial conditions $\mathbf{u_0}$ for the first $\nt$ time steps to the snapshot matrix in line \ref{collect_snapshots_u0}.

\begin{algorithm2e}[t]
	\label[algo]{randomized_basis_generation}
	\DontPrintSemicolon
	\SetAlgoVlined
	\caption{Reduced basis generation via random sampling and SVD}
	\SetKwFunction{KwFn}{RandomizedReducedBasisGeneration}
	\SetKwProg{Fn}{Function}{:}{}
	\Fn{\KwFn{\textup{$\nrand$,$\nt$,$k$,$p$,\tol}}}{
		\SetKwInOut{KwIn}{Input}
		\SetKwInOut{KwOut}{Output} 
		\KwIn{number of randomly chosen time points $\nrand$, number of time steps for local PDE simulations $\nt$, starting time step for collecting snapshots $k\; (\leq \nt)$, (data-dependent) probability distribution $p$, time stepping scheme \texttt{t\_steps}, tolerance \tol}
		\KwOut{reduced basis $\mathbf{U}_\text{red}$ chosen according to tolerance \tol}
		$\mathrm{rand\_ints} \leftarrow $ draw $\nrand$ integers in $\lb 0,\ldots, N_I-1\rb$ according to $p$ \label{draw_ints} \;
		\If{$\mathrm{rand\_ints[i]} \leq \nt$ \label{check_size}}{ remove entry $\mathrm{rand\_ints[i]}$ from rand\_ints \label{remove_entry}
		}
		$\mathrm{endpoints} \leftarrow \mathrm{timegrid[rand\_ints]}$, \label{end_points} $\mathrm{startpoints} \leftarrow \mathrm{timegrid[rand\_ints-\nt]} $ \label{start_points}\;
		\tcp{initialize snapshot matrix}
		$\mathbf{S} \leftarrow \emptyset$ \label{empty_snapshots}\;
		\For{$i = 1,\ldots,\#\mathrm{startpoints}$\label{loop}}{		
			\tcp{draw random initial condition}
			$\boldsymbol{\mathrm{rand\_u_0}} \leftarrow \mathrm{Gaussian(size = N_D)}$ \label{draw_random_vector}\;
			\tcp{solve locally and store solution at time steps $k$ to $\nt$}
			$\boldsymbol{\mathrm{snapshots}} \leftarrow\hspace{-1pt} \mathrm{\texttt{t\_steps}(\mathrm{startpoints}[i],\hspace{-0.5pt} \mathrm{endpoints}[i],\hspace{-0.5pt} \nt,\hspace{-0.5pt}\boldsymbol{\mathrm{rand\_u_0})}}[:,k\hspace{-1pt}:\hspace{-1pt}\nt]$\label{compute_snapshots}\;
			$\mathbf{S} \leftarrow [\mathbf{S},\boldsymbol{\mathrm{snapshots}}]$\label{collect_snapshots}\;
		}
		\tcp{add representation of $\mathbf{u_0}$ for first $\nt$ time steps}
		$\boldsymbol{\mathrm{snapshots\_u_0}} \leftarrow \mathrm{\texttt{t\_steps}(0,\mathrm{timegrid[\nt]},\nt,\mathbf{u_0})}$ \label{add_initials}\;
		$\mathbf{S} \leftarrow [\mathbf{S},\boldsymbol{\mathrm{snapshots\_u_0}}]$\label{collect_snapshots_u0}\;
		\tcp{compute SVD of collected snapshots and cut using $\tol$}
		$\mathbf{U}_\text{red},\_\,, \_ \leftarrow \mathrm{svd}(\mathbf{S}, \tol)$\label{SVD_cut}\;
	}
\end{algorithm2e}

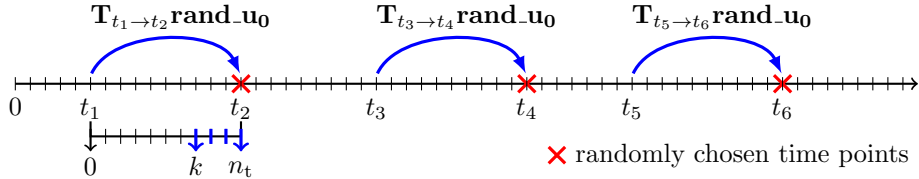
\begin{figure}
	\centering
	\begin{tikzpicture}
	\draw[thick, ->] (0,0) -- (12,0);
	\draw [arrow, bend angle = 65, bend left, very thick, blue] (0.95,0) to (3.05,0);
	\draw [arrow, bend angle = 65, bend left, very thick, blue] (4.75,0) to (6.85,0);
	\draw [arrow, bend angle = 65, bend left, very thick, blue] (8.15,0) to (10.25,0);
	\draw [red, very thick] plot [only marks,mark=x, mark options={scale=2.3}] coordinates{(3,0)};
	\draw [red, very thick] plot [only marks,mark=x, mark options={scale=2.3}] coordinates{(6.8,0)};
	\draw [red, very thick] plot [only marks,mark=x, mark options={scale=2.3}] coordinates{(10.2,0)};
	\draw (0,0) node[below=2pt] {$0$};
	\draw (1,0) node[below=2pt] {$t_1$};
	\draw (3,0) node[below=2pt] {$t_2$};
	\draw (4.8,0) node[below=2pt] {$t_3$};
	\draw (6.8,0) node[below=2pt] {$t_4$};
	\draw (8.2,0) node[below=2pt] {$t_5$};
	\draw (10.2,0) node[below=2pt] {$t_6$};
	\draw (2.2,0.6) node[above] {$\mathbf{T}_{t_1\rightarrow t_2} \boldsymbol{\mathrm{rand\_u_0}}$}; 
	\draw (6,0.6) node[above] {$\mathbf{T}_{t_3\rightarrow t_4} \boldsymbol{\mathrm{rand\_u_0}}$}; 
	\draw (9.4,0.6) node[above] {$\mathbf{T}_{t_5\rightarrow t_6} \boldsymbol{\mathrm{rand\_u_0}}$};
	\foreach \x in {0,0.2,...,11.8} \draw (\x cm, 2.5pt) -- (\x cm, -2.5pt); 
	\draw [thick] (1,-0.5) -- (1,-0.7) -- (3,-0.7) -- (3,-0.5);
	\foreach \x in {1,1.2,...,3.0} \draw (\x cm, -0.6) -- (\x cm, -0.8); 
	\draw[very thick, blue] (2.6, -0.6) -- (2.6, -0.8); 
	\draw[very thick, blue] (2.8, -0.6) -- (2.8, -0.8);
	\draw[thick, ->] (1,-0.6) -- (1,-0.9);
	\draw[very thick, ->,blue] (3,-0.6) -- (3,-0.9);
	\draw[very thick, ->, blue] (2.4,-0.6) -- (2.4,-0.9);
	\draw (1,-1.3) node[above=-1pt] {$0$};
	\draw (2.4,-1.3) node[above=-1pt] {$k$};
	\draw (3,-1.3) node[above=-2pt] {$\nt$};
	\draw [red, very thick] plot [only marks,mark=x, mark options={scale=2.2}] coordinates{(7.2,-0.95)};
	\node[label=right:randomly chosen time points] at (7.2,-0.95) {};
	\end{tikzpicture}
	\caption{Evaluation of multiple transfer operators at randomly chosen points in time for random initial conditions. Sketch of parameters $k$ and $\nt$ in \cref{randomized_basis_generation}.}
	\label{fig_multiple_operators}
\end{figure}

Finally, we compress all collected snapshots via an SVD in line \ref{SVD_cut} to construct the  reduced approximation space.

\begin{remark}[Choice of $\nrand$] \label{remark_adaptive_algorithm}
	In \cref{randomized_basis_generation} the user determines the number of random initial conditions $\nrand$ a priori, for instance, based on the knowledge that the employed computer architecture provides $\nrand$ parallel compute units. While we compress the snapshot matrix via an SVD, which is advantageous in cases where no error estimator is available or is very costly to evaluate, we conjecture that it might also be possible to develop an adaptive randomized algorithm that adaptively augments the reduced basis relying on a probabilistic a posteriori error estimator.
\end{remark}

\begin{remark}[Number of random initial conditions per time point]\label{remark_number_random_initials}
	In lines \ref{draw_random_vector}-\ref{compute_snapshots} of \cref{randomized_basis_generation} we propose to draw one random initial condition and thus compute one local basis function per chosen time point. In light of the probabilistic a priori error bound \cref{random_a_priori_result}, we conjecture that for transfer operators with fast decaying singular values this is sufficient in order to obtain a good approximation accuracy and we also observe this in the numerical experiments at least for the considered test cases.
	
	However, for transfer operators with more slowly decaying singular values, we suggest to draw multiple random initial conditions per chosen time point to enhance the quality of the approximation. In that case, the computation in line \ref{compute_snapshots} has to be split into (multiply) solving for random initial conditions and homogeneous right-hand side and (once) solving for homogeneous initial conditions and right-hand side. 
\end{remark}

\begin{remark}[Computational complexity of \cref{randomized_basis_generation}]
	Assuming that we employ the discretization introduced in \cref{sec_problem_setting,matrix_representation}, the complexity for computing the $\nrand +1$ solution trajectories in lines \ref{compute_snapshots} and \ref{add_initials} of \cref{randomized_basis_generation} is $\mathcal{O}((\nrand +1) \,\nt\, (N_D^{3/2}+N_D\log(N_D)))$ in two or $\mathcal{O}((\nrand +1)\, \nt\, (N_D^{2}+N_D^{4/3}))$ in three spatial dimensions (cf. \cref{remark_local_comp_complexity}). We highlight that the computations in lines \ref{compute_snapshots} and \ref{add_initials} are embarrassingly parallel as the local solution trajectories can be computed completely independently from each other. \cref{randomized_basis_generation} is thus well-suited to be used on modern computer architectures allowing for many parallel computations.
	
	Moreover, the computational complexity for compressing the collected snapshots via an SVD in line \ref{SVD_cut} is $\mathcal{O}(((\nt-k+1)\nrand + \nt) N_D^2)$. Alternatively, we can approximate the SVD of $\mathbf{S}$ via a randomized SVD \cite{HaMaTr11}. For details on the computational complexity of assembling the probability distribution $p$ we refer to \cref{subsec_choice_prob}.
\end{remark}

\subsection{Choice of probability distribution}
\label{subsec_choice_prob}

In this subsection we discuss how to choose the probability distribution $p$ that is used in \cref{randomized_basis_generation} for sampling points in time. As the behavior of the solution in time is influenced by the (time-dependent) data functions of the PDE, we guide the time point selection by the behavior of the data functions in time. To this end, we represent the time-dependent data functions as matrices, where each column corresponds to one time point and employ column subset selection techniques from randomized NLA \cite{DerMah21,DriMah16}. In detail, we focus on the following standard sampling strategies: uniform, squared norm, and leverage score sampling. While these methods are often used to construct low-rank matrix decompositions (cf., e.g., \cite{MahDri09}), we exploit them, as a key new contribution, for the purpose of time point selection.

In the following, we assume that the matrix $\mathbf{B}\in \R^{N_D \times N_I}$ represents the time-dependent data encoded in $\mathcal{A}$ or $\mathcal{F}$ (for instance, a coefficient function or source terms and boundary data that vary in time). In case of a time-dependent coefficient function, one option is to choose the entries of $\mathbf{B}$ equal to the value of the function in the respective space-time nodes. For a time-dependent $\mathcal{F}$, one could set the columns of $\mathbf{B}$ equal to the right-hand side vectors $\mathbf{F}_l$ for $0 \leq l \leq N_I-1$ (cf. \cref{FE_matrices}).

Next, we present the probability distributions (see, e.g., \cite{DerMah21,DriMah16} for an overview).

\textbf{Uniform sampling.} In the case of uniform sampling, we sample a point of time $t_i$  with probability $p_i = 1/N_I$ for $i=0,\ldots, N_I-1$, where $N_I$ denotes the number of time points determined by the partition $0=t_0\leq ... \leq t_{N_I-1}=T$ of the time interval $I$. The computational costs for constructing the probability distribution are zero.

\textbf{Squared norm sampling.} If we employ squared norm sampling \cite{FrKaVe04}, we select a point of time $t_i \in \lb t_0, \ldots ,t_{N_I-1} \rb $ with probability $p_i = \Vert \mathbf{B}[:,i] \Vert_2^2 / \Vert \mathbf{B} \Vert_F^2$, where $\Vert \cdot \Vert_F^2$ denotes the Frobenius norm and $ \mathbf{B}[:,i]$ is the $i$-th column of $\mathbf{B}$. The computation of the probabilities $p_i$ can be carried out in parallel and its complexity scales linearly in the number of non-zero entries of $\mathbf{B}$, e.g. for a dense matrix $\mathbf{B}$ the computational complexity is $\mathcal{O}(N_I N_D)$. 

\textbf{Leverage score sampling.} The leverage score sampling approach \cite{DrMaMu08} captures the statistical leverage of the columns of $\mathbf{B}$ on its best rank-$r$ approximation and preferably chooses columns which have a large influence on the best rank-$r$ fit of $\mathbf{B}$ \cite{MahDri09}. To this end, one computes the leading $r$ right singular vectors $\mathbf{v}_1,\ldots,\mathbf{v}_r$ of $\mathbf{B}$ and selects a column $i$ of $\mathbf{B}$ (a time point $t_i \in \lb t_0, \ldots ,t_{N_I-1} \rb $) with probability $p_i = 1/r \sum_{j=1}^{r} \mathbf{v}_j[i]^2$, where $\mathbf{v}_j[i]$ is the $i$-th entry of $\mathbf{v}_j$. The complexity for computing the SVD of $\mathbf{B}$ is $\mathcal{O}(r N_I N_D)$. Alternatively, we can approximate the SVD of $\mathbf{B}$ via a randomized SVD \cite{HaMaTr11}.

\subsubsection{Discussion and comparison of probability distributions}
\label{subsec_disc_choice_prob}

In the following, we discuss and compare the probability distributions introduced in \cref{subsec_choice_prob} with respect to their capability of detecting relevant time points of (heterogeneous) time-dependent data functions and computational costs. For a more general comparison, we refer to \cite{DriMah16}.

As no data-dependent information is incorporated, the uniform sampling approach might not detect relevant problem-specific features in time unless a large number of time points is drawn and can thus lead to poor results. For instance, a data matrix $\mathbf{B}$ with only one non-zero column would require to draw $\mathcal{O}(N_I)$ columns (time points) in order to detect the non-zero data at the single point in time with high probability. In contrast to uniform sampling, there exist error bounds for both the squared norm and the leverage score sampling approach (cf. \cref{supp:error_bounds}).

As illustrated for two different time-dependent data functions\footnote{To ensure reproducibility, the data functions and discretization parameters for Example 1 are listed in \cref{appendix_data_example_1}.} in \cref{figure_comp_frob_lev}, we can infer that the squared norm sampling approach might not detect parts of the data that has values on smaller scales or smaller temporal scales. In contrast, the leverage scores weight the heterogeneous parts of the data equally with the same expectation. As a result, the leverage score sampling approach more likely detects all dominant features of the heterogeneous data and might lead to a better approximation accuracy than the squared norm approach. Moreover, as can be expected from the definition of leverage scores, we observe from numerical experiments not included in this paper that leverage scores are capable of detecting repetitions in the data functions.

Nevertheless, the costs for computing the rank-$r$ leverage scores are $r$ times the costs of computing the squared norm sampling distribution (for a dense matrix $\mathbf{B}$). To reduce the computational complexity, one could employ a randomized SVD and parallelize computations. However, this potentially still leads to costs that are not negligible compared to the uniform and squared norm sampling approach.

Consequently, uniform (and squared norm) sampling can be advantageous compared to leverage score sampling if many parallel compute units are available and the data is, for instance, spread relatively uniform over the whole time interval. In that case, drawing a large number of time points will likely yield a good approximation accuracy without having to compute (or approximate) the SVD of a large matrix.

\begin{figure}
	\begin{tikzpicture}
	\begin{axis}[
	width=6cm,
	height=3.5cm,
	xmin=0,
	xmax=10,
	ymin=0,
	ymax=4.4,
	legend style={at={(1.01,1)},anchor=north west,font=\footnotesize},
	xtick={0,5,10},
	xticklabels = {0,$t$,10},
	ytick={},
	xtick style = {white},
	ytick style = {white},
	xlabel= (a),
	label style={font=\footnotesize, yshift = 3},
	tick label style={font=\footnotesize}  
	]
	\addplot[solid, black, thick] table[x index=0, y index=1] {data_figures/compare_frob_lev/grid_t+2_long_stoves.dat};
	\addplot[solid, black, thick] table[x index=0, y index=2] {data_figures/compare_frob_lev/grid_t+2_long_stoves.dat};
	\addplot[densely dotted, red, very thick] table[x index=0, y index=1, y expr={\thisrowno{1}*400}] {data_figures/compare_frob_lev/leverage_2_long_stoves.dat};
	\addplot[densely dashed, teal,very thick] table[x index=0, y index=1, y expr={\thisrowno{1}*400}] {data_figures/compare_frob_lev/frobenius_2_long_stoves.dat};
	\end{axis}
	\end{tikzpicture}
	\hspace*{-0.3cm}
	\begin{tikzpicture}
	\begin{axis}[
	width=6cm,
	height=3.5cm,
	xmin=0,
	xmax=10,
	ymin=0,
	ymax=1.12,
	legend style={at={(1.02,1)},anchor=north west,font=\footnotesize},
	xtick={0,5,10},
	xticklabels = {0,$t$,10},
	ytick={},
	xtick style = {white},
	ytick style = {white},
	xlabel= (b),
	label style={font=\footnotesize, yshift = 3},
	tick label style={font=\footnotesize}  
	]
	\addplot[solid, black,thick] table[x index=0, y index=1] {data_figures/compare_frob_lev/grid_t+long+short_stove.dat};
	\addplot[solid, black,thick] table[x index=0, y index=2] {data_figures/compare_frob_lev/grid_t+long+short_stove.dat};
	\addplot[densely dotted, red, very thick] table[x index=0, y index=1, y expr={\thisrowno{1}*15}] {data_figures/compare_frob_lev/leverage_long+short_stove.dat};
	\addplot[densely dashed, teal,very thick] table[x index=0, y index=1, y expr={\thisrowno{1}*100}] {data_figures/compare_frob_lev/frobenius_long+short_stove.dat};
	\legend{data in time, ,leverage scores, squared norms};
	\end{axis}
	\end{tikzpicture}
	\caption{\footnotesize Example 1: Rank-$2$ leverage score (LS) and squared norm (SN) probability distributions for two (spatially disjoint) signals with different values (a) or different temporal scales (b). Values of LS and SN are scaled with a factor of $400$ (a) or $15$ (LS) and $100$ (SN) (b).
	}
	\label{figure_comp_frob_lev}
\end{figure}
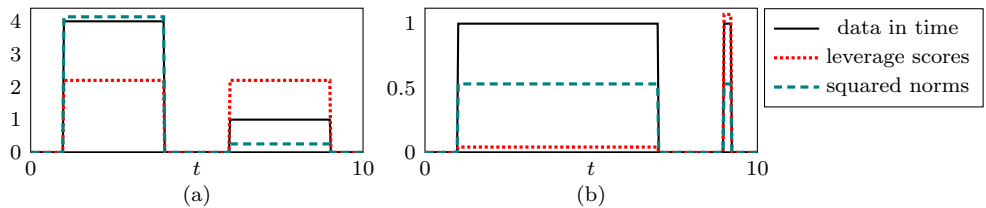

\subsubsection{Sampling from multiple probability distributions}
\label{subsubsec_multiple_probs}
If both $\mathcal{A}$ and $\mathcal{F}$ encode time-dependent data due to, for instance, a coefficient function and source terms that vary in time, one might want to include both data matrices $\mathbf{B}_\mathcal{A}$ and $\mathbf{B}_\mathcal{F}$ in the time point selection process. 

To this end, one option is to attach $\mathbf{B}_\mathcal{A}$ to $\mathbf{B}_\mathcal{F}$ and sample from the probability distribution computed from $[\mathbf{B}_\mathcal{A} \mathbf{B}_\mathcal{F}]\in \R^{N_D\times 2 N_I}$. However, if the data encoded in $\mathcal{F}$, for instance, has values on smaller scales compared to $\mathcal{A}$ (due to, e.g., high conductivity channels), employing the squared norm sampling approach one might sample solely from the part of the probability distribution that is associated with $\mathcal{A}$ and neglect the information encoded in $\mathcal{F}$ (cf. \cref{figure_comp_frob_lev} and the discussion in \cref{subsec_disc_choice_prob}). As leverage score sampling is based on the SVD of $[\mathbf{B}_\mathcal{A} \mathbf{B}_\mathcal{F}]$, the approach more likely detects the dominant modes encoded in both $\mathcal{A}$ and $\mathcal{F}$. Nevertheless, computing the SVD of the large matrix  $[\mathbf{B}_\mathcal{A} \mathbf{B}_\mathcal{F}]\in \R^{N_D\times 2 N_I}$ is more costly than computing the SVDs of $\mathbf{B}_\mathcal{A}\in \R^{N_D\times N_I}$ and $\mathbf{B}_\mathcal{F}\in \R^{N_D\times N_I}$ separately. Therefore, we propose to assemble the probability distributions associated with $\mathbf{B}_\mathcal{A}$ and $\mathbf{B}_\mathcal{F}$ separately and draw from both distributions simultaneously. Moreover, we note that in certain cases it might be necessary to also sample from data functions that are constant in time to achieve a good quality of approximation (e.g. in case of a constant advection field, cf. Experiment 3 in \cref{subsec_num_ex_2}).

\section{Numerical experiments}
\label{numerical_experiments} 

In this section we demonstrate the excellent approximation properties of the reduced basis generated via \cref{randomized_basis_generation}. In \cref{subsec_num_ex_1,subsec_num_ex_2} we first comprehensively test how the results depend on various parameters such as the number of chosen time points $\nrand$, the local oversampling size $\nt$, the number of collected snapshots for the SVD determined via $k$, or the probability distribution for drawing points in time. For this purpose, we consider both the linear heat equation and an advection-diffusion problem for time-dependent source terms. In particular, we demonstrate that the proposed method is able to tackle higher values of advection. Subsequently, we show in \cref{subsec_num_ex_4} that the randomized approach is well capable of approximating a problem with a time-dependent permeability coefficient that is rough with respect to both space and time using real-world data taken from the SPE10 benchmark problem \cite{SPE10}. Moreover, we demonstrate in \cref{subsec_num_ex_2,subsec_num_ex_4} how to sample from multiple probability distributions simultaneously. 

For the experiments, we employ the discretization introduced in \cref{sec_problem_setting}, use \cref{randomized_basis_generation} to generate the randomized reduced basis, and construct the reduced approximation via Galerkin projection as described in \cref{sec_problem_setting}. Hence, in what follows the term \textit{error} always refers to the error between the solution of \cref{eq_time_stepping} and its reduced approximation determined by solving the reduced problem \cref{eq_reduced_global_approximation}. Moreover, we prescribe homogeneous Dirichlet boundary conditions on $I\times \Sigma_D$ in all experiments. The source code to reproduce all results shown in this section is provided in \cite{code}.

\subsection{Stove problem}
\label{subsec_num_ex_1}

In this subsection, we consider the heat equation (\cref{PDE_ex} with $b\equiv c \equiv 0$) and investigate the following numerical experiment, which we refer to as Example 2: We choose $I=(0,10)$, $D=(0,1)^2$, $\Sigma_N=\emptyset$, and discretize the spatial domain $D$ with a regular quadrilateral mesh with mesh size $1/100$ in both directions. For the implicit Euler method, we use an equidistant time step size of $1/30$. Furthermore, we choose the initial condition $u_0(x,y)=\sum_{i=1}^{3} \sin(i \pi x) \sin(i \pi y)$, the coefficient $\kappa\equiv 1$, and the source term $f(t,x,y)=\sum_{i=1}^3 f_i(t)f_i(x,y)$ involving three spatially disjoint heat sources (stoves) that are turned on and off in time as illustrated in \cref{figure_stove_vals} (left and middle). \cref{figure_stove_vals} (right) shows the rank-$3$ leverage score probability distribution computed from the right-hand side matrix $\mathbf{F}$, whose columns are the right hand-side vectors $\mathbf{F}_{l}$ for $l=0,\ldots,300$ \cref{FE_matrices}. Unless stated otherwise, we use the rank-$3$ leverage score probability distribution to draw time points in \cref{randomized_basis_generation} for this example.

\begin{figure}
	\begin{tikzpicture}
	\begin{axis}[
	title={$f_1(x,y)$},
	width=3cm,
	height=3cm,
	xmin=0,
	xmax= 1,
	ymin=0,
	ymax=1,
	xlabel=$x$,
	ylabel=$y$,
	xtick = {0,0.5,1},
	ytick={0,0.5,1},
	tick style ={color=white},
	title style = {font=\footnotesize, yshift = -3},
	label style={font=\footnotesize},
	xlabel style = {yshift = 3},
	tick label style={font=\scriptsize}  
	]
	\draw[fill=lightgray] (0,0) rectangle (1,1);
	\draw[fill=teal] (0.2,0.2) rectangle (0.3,0.3);
	\end{axis}
	\end{tikzpicture}
	\hspace*{-0.5cm}
	\begin{tikzpicture}
	\begin{axis}[
	title={$f_2(x,y)$},
	title style = {font=\footnotesize, yshift = -3},
	width=3cm,
	height=3cm,
	xmin=0,
	xmax= 1,
	ymin=0,
	ymax=1,
	xlabel=$x$,
	ylabel=,
	xtick = {0,0.5,1},
	yticklabels={,,},
	tick style ={color=white},
	label style={font=\footnotesize},
	xlabel style = {yshift = 3},
	tick label style={font=\scriptsize}  
	]
	\draw[fill=lightgray] (0,0) rectangle (1,1);
	\draw[fill=red] (0.45,0.45) rectangle (0.55,0.55);
	\end{axis}
	\end{tikzpicture}
	\hspace*{-0.5cm}
	\begin{tikzpicture}
	\begin{axis}[
	title={$f_2(x,y)$},
	title style = {font=\footnotesize, yshift = -3},
	width=3cm,
	height=3cm,
	xmin=0,
	xmax= 1,
	ymin=0,
	ymax=1,
	xlabel=$x$,
	ylabel=,
	xtick = {0,0.5,1},
	yticklabels={,,},
	tick style ={color=white},
	label style={font=\footnotesize},
	xlabel style = {yshift = 3},
	tick label style={font=\scriptsize}  
	]
	\draw[fill=lightgray] (0,0) rectangle (1,1);
	\draw[fill=blue] (0.65,0.65) rectangle (0.8,0.8);
	\end{axis}
	\end{tikzpicture}
	\hspace*{-0.4cm}
	\begin{tikzpicture}
	\begin{axis}[
	title={$f_i(t)$},
	title style = {font=\footnotesize, yshift = -3},
	width=4cm,
	height=3cm,
	xmin=0,
	xmax= 10,
	ymin=0,
	ymax=31.5,
	legend style={at={(0.92,1)},anchor=north west,font=\footnotesize},
	xlabel=$t$,
	xtick = {0,5,10},
	xtick style = {white},
	ytick={10,20},
	ytick style = {white},
	xlabel style = {yshift = 3},
	label style={font=\footnotesize},
	tick label style={font=\scriptsize}  
	]
	\addplot[solid, teal, thick] table[x index=0, y index=1] {data_figures/stove_grid_t+f_vals.dat};
	\addplot[solid, red, thick] table[x index=0, y index=2] {data_figures/stove_grid_t+f_vals.dat};
	\addplot[solid, blue, thick] table[x index=0, y index=3] {data_figures/stove_grid_t+f_vals.dat};
	\legend{$f_1(t)$, $f_2(t)$, $f_3(t)$}
	\end{axis}
	\end{tikzpicture}
	\begin{tikzpicture}
	\begin{axis}[
	title={Leverage scores},
	title style = {font=\footnotesize, yshift = -2},
	width=4cm,
	height=3cm,
	xmin=0,
	xmax= 10,
	ymin=0,
	ymax=0.0072,
	xlabel=$t$,
	xtick = {0,5,10},
	xtick style = {white},
	ytick={0,0.007},
	ytick style = {white},
	label style={font=\footnotesize},
	xlabel style = {yshift = 3},
	tick label style={font=\scriptsize}  
	]
	\addplot[solid, purple, thick] table[x index=0, y index=1] {data_figures/stove_grid_t+leverage_scores.dat};
	\end{axis}
	\end{tikzpicture}
	\caption{\footnotesize Example 2: Source term $f(t,x,y)=\sum_{i=1}^{3} f_i(t) f_i(x,y)$ and corresponding rank-$3$ leverage score probability distribution. Gray equates to $0$, while green, red, and blue equate to $1$ (left).
	}
	\label{figure_stove_vals}
\end{figure}
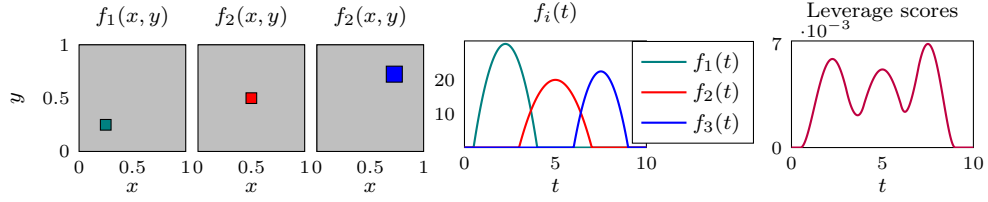

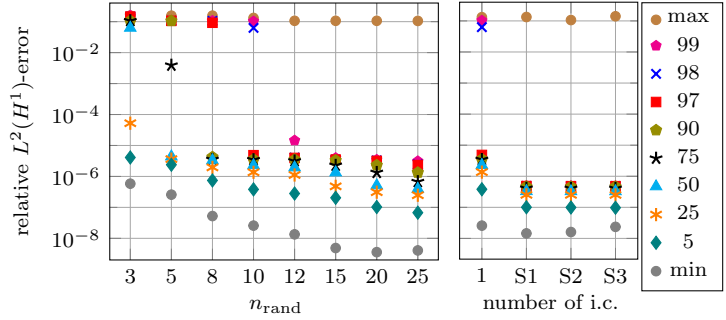
\begin{figure}
	\centering
	\begin{tikzpicture}
	\begin{semilogyaxis}[
	width=5.95cm,
	height=5cm,
	xmin=-0.5,
	xmax=7.5,
	ymin=2e-9,
	ymax=4e-1,
	legend style={at={(1.01,1)},anchor=north west,font=\footnotesize},
	grid=both,
	grid style={line width=.1pt, draw=gray!70},
	major grid style={line width=.2pt,draw=gray!70},
	xtick={0,1,2,3,4,5,6,7},
	ytick={1e-8,1e-6, 1e-4, 1e-2},
	minor ytick={1e-7, 1e-5, 1e-3, 1e-1},
	xticklabels={3,5,8,10,12,15,20,25},
	xlabel= $\textcolor{white}{()}\hspace{-5pt}\nrand$,
	xlabel style = {yshift = 3},
	ylabel= relative $L^2(H^1)$-error,
	label style={font=\footnotesize},
	tick label style={font=\footnotesize}  
	]
	\addplot[only marks, brown, mark=*, mark size=1.8pt] table[x expr=\coordindex, y index=0] {data_figures/stove_nrand_quantiles_rel_L2H1.dat};
	\addplot[only marks, magenta, mark=pentagon*, mark size=2pt] table[x expr=\coordindex, y index=1] {data_figures/stove_nrand_quantiles_rel_L2H1.dat};
	\addplot[only marks,blue, mark=x, mark size=2.5pt,thick] table[x expr=\coordindex, y index=2] {data_figures/stove_nrand_quantiles_rel_L2H1.dat};
	\addplot[only marks, red, mark=square*, mark size=1.8pt] table[x expr=\coordindex, y index=3] {data_figures/stove_nrand_quantiles_rel_L2H1.dat};
	\addplot[only marks, olive, mark=pentagon*, mark size=2.2pt] table[x expr=\coordindex, y index=5] {data_figures/stove_nrand_quantiles_rel_L2H1.dat};
	\addplot[only marks,black, mark=star,mark size=2.5pt, thick] table[x expr=\coordindex, y index=8] {data_figures/stove_nrand_quantiles_rel_L2H1.dat};
	\addplot[only marks,cyan,mark=triangle*, mark size = 2.5pt] table[x expr=\coordindex, y index=9] {data_figures/stove_nrand_quantiles_rel_L2H1.dat};
	\addplot[only marks, orange, mark=asterisk, mark size=2.5pt,thick] table[x expr=\coordindex, y index=10] {data_figures/stove_nrand_quantiles_rel_L2H1.dat};
	\addplot[only marks,teal, mark=diamond*, mark size=2.5pt] table[x expr=\coordindex, y index=11] {data_figures/stove_nrand_quantiles_rel_L2H1.dat};
	\addplot[only marks, gray, mark= oplus*, mark size=1.8pt] table[x expr=\coordindex, y index=12] {data_figures/stove_nrand_quantiles_rel_L2H1.dat};
	\end{semilogyaxis}
	\end{tikzpicture}
	\hspace*{-0.2cm}
	\begin{tikzpicture}
	\begin{semilogyaxis}[
	width=3.95cm,
	height=5cm,
	xmin=-0.5,
	xmax=3.5,
	ymin=2.15e-9,
	ymax=4e-1,
	legend style={at={(1.02,1)},anchor=north west,font=\footnotesize},
	grid=both,
	grid style={line width=.1pt, draw=gray!70},
	major grid style={line width=.2pt,draw=gray!70},
	xtick={0,1,2,3},
	ytick={1e-8,1e-6, 1e-4, 1e-2},
	minor ytick={1e-7, 1e-5, 1e-3, 1e-1},
	yticklabels={,,},
	xticklabels={1,S1,S2,S3},
	xlabel= $\textcolor{white}{()}\hspace{-5pt}$number of i.c.,
	label style={font=\footnotesize},
	xlabel style = {yshift = 3},
	tick label style={font=\footnotesize}  
	]
	\addplot[only marks, brown, mark=*, mark size=1.8pt] table[x expr=\coordindex, y index=0] {data_figures/stove_split_ninitials_quantiles_rel_L2H1.dat};
	\addplot[only marks, magenta, mark=pentagon*, mark size=2pt] table[x expr=\coordindex, y index=1] {data_figures/stove_split_ninitials_quantiles_rel_L2H1.dat};
	\addplot[only marks, blue, mark=x, mark size=2.5pt,thick] table[x expr=\coordindex, y index=2] {data_figures/stove_split_ninitials_quantiles_rel_L2H1.dat};
	\addplot[only marks, red, mark=square*, mark size=1.8pt] table[x expr=\coordindex, y index=3] {data_figures/stove_split_ninitials_quantiles_rel_L2H1.dat};
	\addplot[only marks, olive, mark=pentagon*, mark size=2.2pt] table[x expr=\coordindex, y index=5] {data_figures/stove_split_ninitials_quantiles_rel_L2H1.dat};
	\addplot[only marks,black, mark=star,mark size=2.5pt, thick] table[x expr=\coordindex, y index=8] {data_figures/stove_split_ninitials_quantiles_rel_L2H1.dat};
	\addplot[only marks,cyan,mark=triangle*, mark size=2.5pt] table[x expr=\coordindex, y index=9] {data_figures/stove_split_ninitials_quantiles_rel_L2H1.dat};
	\addplot[only marks, orange, mark=asterisk, mark size=2.5pt,thick] table[x expr=\coordindex, y index=10] {data_figures/stove_split_ninitials_quantiles_rel_L2H1.dat};
	\addplot[only marks,teal, mark=diamond*, mark size=2.5pt] table[x expr=\coordindex, y index=11] {data_figures/stove_split_ninitials_quantiles_rel_L2H1.dat};
	\addplot[only marks, gray, mark= oplus*, mark size=1.8pt] table[x expr=\coordindex, y index=12] {data_figures/stove_split_ninitials_quantiles_rel_L2H1.dat};
	\legend{max,99,98,97,90,75,50,25,5,min};
	\end{semilogyaxis}
	\end{tikzpicture}
	\caption{\footnotesize Example 2: Quantiles of relative $L^2(I,H^1(D))$-error for $\nt = 15$, $k = 13$, $\tol = 10^{-8}$, and $100.000$ realizations for varying numbers of $\nrand$ (left) or $\nrand = 10$ and $1$, $2$, or $3$ random initial conditions (i.c.) per time point (right). Here, S indicates that local computations are performed separately for right-hand side and initial conditions (cf. \cref{remark_number_random_initials}).
	}
	\label{figure_stove_nrand+split_ninitials}
\end{figure}

\begin{figure}
	\centering
	\begin{tikzpicture}
	\begin{semilogyaxis}[
	width=6cm,
	height=5cm,
	xmin=0,
	xmax=15,
	ymin=1e-15,
	ymax=1e-3,
	legend style={at={(1.02,1)},anchor=north west,font=\footnotesize},
	grid=both,
	grid style={line width=.1pt, draw=gray!50},
	major grid style={line width=.2pt,draw=gray!50},
	xtick={0,2,4,6,8,10,12,14},
	minor xtick = {1,3,5,7,9},
	ytick={1e-14,1e-12,1e-10,1e-8,1e-6, 1e-4},
	minor ytick={1e-13,1e-11,1e-9, 1e-7, 1e-5, 1e-3},
	xlabel= $n$,
	ylabel = $\Vert T - \proj_{H^n_{\text{rand}}} T\Vert$,
	label style={font=\footnotesize},
	xlabel style = {yshift = 3},
	tick label style={font=\footnotesize}  
	]
	\addplot+[ thick, mark=x, mark repeat = 1, mark options={solid}, densely dotted, teal] table[x expr=\coordindex,y index=6]{data_figures/stove_transfer_singular_vals_nt=15.dat};
	\addplot+[ thick, mark=diamond, mark repeat = 1, mark options={solid}, densely dashed, blue] table[x expr=\coordindex,y index=5]{data_figures/stove_transfer_singular_vals_nt=15.dat};
	\addplot+[ thick, mark=star, mark repeat = 1, mark options={solid}, mark size = 2.4pt, densely dashdotted, orange] table[x expr=\coordindex,y index=4]{data_figures/stove_transfer_singular_vals_nt=15.dat};
	\addplot+[ thick, mark=asterisk, mark repeat = 1,mark size = 2.4pt, black, solid, mark options={solid}] table[x expr=\coordindex,y index=3]{data_figures/stove_transfer_singular_vals_nt=15.dat};
	\addplot+[ thick, mark=o, mark repeat = 1, densely dotted, brown, mark options={solid}] table[x expr=\coordindex,y index=2]{data_figures/stove_transfer_singular_vals_nt=15.dat};
	\addplot+[ thick, mark=triangle, mark repeat = 1, densely dashed, cyan, mark options={solid}] table[x expr=\coordindex,y index=1]{data_figures/stove_transfer_singular_vals_nt=15.dat};
	\addplot+[ thick, red, solid, mark = none] table[x expr=\coordindex,y index=0]{data_figures/stove_transfer_singular_vals_nt=15.dat};
	\legend{max, 95, 75, 50, 25, min, $\sigma^{(n+1)}_{t_i\rightarrow t_i+0.5}$}
	\end{semilogyaxis}
	\end{tikzpicture}
	\caption{\footnotesize Example 2: Singular value decay of transfer operator $T_{t_i \rightarrow t_i+ 15/30}$ ($0\leq i \leq 285$) and quantiles of projection error $\Vert T_{t_i\rightarrow t_i+0.5} - \proj_{H^n_{t_i+0.5,\text{rand}}} T_{t_i\rightarrow t_i+0.5}\Vert$ over basis size $n$ for $2.000$ realizations.
	}
	\label{figure_stove_transfer_singular_vals}
\end{figure}
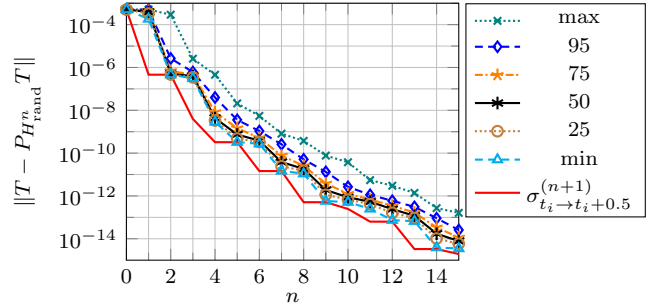

First, we investigate the influence of the number of drawn time points $\nrand$ on the approximation accuracy of the reduced basis. As the rank of the right-hand side is three, we choose $\nrand\geq 3$. In \cref{figure_stove_nrand+split_ninitials} (left) we observe that for $\nrand=3$ in $50\%$ of cases the error is of the order of $10^{-1}$ and only in $5\%$ of cases the error is below $10^{-5}$, while for $\nrand = 8$ or $\nrand = 10$ in $90\%$ or $97\%$ of cases the error is below $10^{-5}$. We thus infer that only a small amount of oversampling is necessary to detect all three stoves with high probability. If we choose $\nrand>10$ we see in \cref{figure_stove_nrand+split_ninitials} (left) that the approximation accuracy still slightly improves compared to smaller $\nrand$. Nevertheless, we recall that also the computational costs increase with increasing $\nrand$. For the following tests in this subsection we therefore choose $\nrand=10$ as a good trade-off between approximation accuracy and computational costs.

Next, we test how the approximation accuracy depends on the number of random initial conditions per drawn time point. We observe that for S$1$-S$3$ $99\%$ of the realizations yield an approximation with a relative $L^2(I,H^1(D))$-error below $10^{-6}$ and the approximation accuracy does not improve if we choose more than one random initial condition. This can be traced back to the very fast, exponential decay of the singular values of the transfer operator shown in \cref{figure_stove_transfer_singular_vals} (cf. \cref{probabilistic_a_priori}). If we draw only one random initial condition per chosen time point and do not separate the local computation for right-hand side and random initial condition, we observe in \cref{figure_stove_nrand+split_ninitials} (right) that $97\%$ of the realizations yield an approximation with a relative $L^2(I,H^1(D))$-error below $10^{-5}$. As the approximation accuracy is thus very good and the costs for computing local solutions are only half as much as in the separated approach (S$1$), we choose to not separate local computations for right-hand side and random initial condition in all other tests in this subsection.

We observe in \cref{figure_stove_transfer_singular_vals} that while the randomized spaces often provide an approximation that converges nearly with the optimal rate $\sigma^{(n+1)}_{t_i\rightarrow t_i+0.5}$, in some cases two or three basis vectors more are needed to guarantee the same approximation accuracy. This is in line with the predictions by theory (cf. \cref{probabilistic_a_priori}).

\begin{figure}
	\begin{minipage}{0.64\textwidth}
		\begin{tikzpicture}
		\begin{semilogyaxis}[
		width=4.5cm,
		height=5cm,
		xmin=-0.5,
		xmax=4.5,
		ymin=4e-9,
		ymax=4e-1,
		legend style={at={(1.01,1)},anchor=north west,font=\footnotesize},
		grid=both,
		grid style={line width=.1pt, draw=gray!70},
		major grid style={line width=.2pt,draw=gray!70},
		xtick={0,1,2,3,4},
		ytick={1e-8,1e-6, 1e-4, 1e-2},
		minor ytick={1e-7, 1e-5, 1e-3, 1e-1},
		xticklabels={11,12,13,14,15},
		xlabel style = {yshift = 3},
		xlabel= $\textcolor{white}{()}\hspace{-5pt} k$,
		ylabel= relative $L^2(H^1)$-error,
		label style={font=\footnotesize},
		tick label style={font=\footnotesize}  
		]
		\addplot[only marks, brown, mark=*, mark size=1.8pt] table[x expr=\coordindex, y index=0] {data_figures/stove_k_quantiles_rel_L2H1.dat};
		\addplot[only marks, magenta, mark=pentagon*, mark size=2pt] table[x expr=\coordindex, y index=1] {data_figures/stove_k_quantiles_rel_L2H1.dat};
		\addplot[only marks,blue, mark=x, mark size=2.5pt,thick] table[x expr=\coordindex, y index=2] {data_figures/stove_k_quantiles_rel_L2H1.dat};
		\addplot[only marks, red, mark=square*, mark size=1.8pt] table[x expr=\coordindex, y index=3] {data_figures/stove_k_quantiles_rel_L2H1.dat};
		\addplot[only marks, olive, mark=pentagon*, mark size=2.2pt] table[x expr=\coordindex, y index=5] {data_figures/stove_k_quantiles_rel_L2H1.dat};
		\addplot[only marks,black, mark=star,mark size=2.5pt, thick] table[x expr=\coordindex, y index=8] {data_figures/stove_k_quantiles_rel_L2H1.dat};
		\addplot[only marks,cyan,mark=triangle*, mark size = 2.5pt] table[x expr=\coordindex, y index=9] {data_figures/stove_k_quantiles_rel_L2H1.dat};
		\addplot[only marks, orange, mark=asterisk, mark size=2.5pt,thick] table[x expr=\coordindex, y index=10] {data_figures/stove_k_quantiles_rel_L2H1.dat};
		\addplot[only marks,teal, mark=diamond*, mark size=2.5pt] table[x expr=\coordindex, y index=11] {data_figures/stove_k_quantiles_rel_L2H1.dat};
		\addplot[only marks, gray, mark= oplus*, mark size=1.8pt] table[x expr=\coordindex, y index=12] {data_figures/stove_k_quantiles_rel_L2H1.dat};
		\end{semilogyaxis}
		\end{tikzpicture}
		\hspace*{-0.2cm}
		\begin{tikzpicture}
		\begin{semilogyaxis}[
		width=4cm,
		height=5cm,
		xmin=-0.5,
		xmax=3.5,
		ymin=4e-9,
		ymax=4e-1,
		legend style={at={(1.02,1)},anchor=north west,font=\footnotesize},
		grid=both,
		grid style={line width=.1pt, draw=gray!70},
		major grid style={line width=.2pt,draw=gray!70},
		xtick={0,1,2,3},
		ytick={1e-8,1e-6, 1e-4, 1e-2},
		yticklabels={,,},
		minor ytick={1e-7, 1e-5, 1e-3, 1e-1},
		xticklabels={10,15,20,25},
		xlabel style = {yshift = 3},
		xlabel= $\textcolor{white}{()}\hspace{-5pt}\nt$,
		label style={font=\footnotesize},
		tick label style={font=\footnotesize}  
		]
		\addplot[only marks, brown, mark=*, mark size=1.8pt] table[x expr=\coordindex, y index=0] {data_figures/stove_nt_quantiles_rel_L2H1.dat};
		\addplot[only marks, magenta, mark=pentagon*, mark size=2pt] table[x expr=\coordindex, y index=1] {data_figures/stove_nt_quantiles_rel_L2H1.dat};
		\addplot[only marks,blue, mark=x, mark size=2.5pt,thick] table[x expr=\coordindex, y index=2] {data_figures/stove_nt_quantiles_rel_L2H1.dat};
		\addplot[only marks, red, mark=square*, mark size=1.8pt] table[x expr=\coordindex, y index=3] {data_figures/stove_nt_quantiles_rel_L2H1.dat};
		\addplot[only marks, olive, mark=pentagon*, mark size=2.2pt] table[x expr=\coordindex, y index=5] {data_figures/stove_nt_quantiles_rel_L2H1.dat};
		\addplot[only marks,black, mark=star,mark size=2.5pt, thick] table[x expr=\coordindex, y index=8] {data_figures/stove_nt_quantiles_rel_L2H1.dat};
		\addplot[only marks,cyan,mark=triangle*, mark size = 2.5pt] table[x expr=\coordindex, y index=9] {data_figures/stove_nt_quantiles_rel_L2H1.dat};
		\addplot[only marks, orange, mark=asterisk, mark size=2.5pt,thick] table[x expr=\coordindex, y index=10] {data_figures/stove_nt_quantiles_rel_L2H1.dat};
		\addplot[only marks,teal, mark=diamond*, mark size=2.5pt] table[x expr=\coordindex, y index=11] {data_figures/stove_nt_quantiles_rel_L2H1.dat};
		\addplot[only marks, gray, mark= oplus*, mark size=1.8pt] table[x expr=\coordindex, y index=12] {data_figures/stove_nt_quantiles_rel_L2H1.dat};
		\legend{max,99,98,97,90,75,50,25,5,min};
		\end{semilogyaxis}
		\end{tikzpicture}
	\end{minipage}
	\begin{minipage}{0.35\textwidth}
		\begin{tikzpicture}
		\begin{axis}[
		width=4cm,
		height=5cm,
		xmin=-0.5,
		xmax=3.5,
		ymin=8.3,
		ymax=24.7,
		legend style={at={(1.02,1)},anchor=north west,font=\footnotesize},
		grid=both,
		grid style={line width=.1pt, draw=gray!70},
		major grid style={line width=.2pt,draw=gray!70},
		xtick={0,1,2,3},
		ytick={10,15,20},
		minor ytick={9,11,12,13,14,16,17,18,19,21,22,23,24},
		xticklabels={10,15,20,25},
		xlabel style = {yshift = 3},
		xlabel= $\textcolor{white}{()}\hspace{-5pt}\nt$,
		ylabel= reduced dimension,
		label style={font=\footnotesize},
		tick label style={font=\footnotesize}  
		]
		\addplot[only marks, brown, mark=*, mark size=1.8pt] table[x expr=\coordindex, y index=0] {data_figures/stove_nt_quantiles_red_sizes.dat};
		\addplot[only marks, magenta, mark=pentagon*, mark size=2pt] table[x expr=\coordindex, y index=1] {data_figures/stove_nt_quantiles_red_sizes.dat};
		\addplot[only marks,olive,mark=square*, mark size=1.8pt] table[x expr=\coordindex, y index=4] {data_figures/stove_nt_quantiles_red_sizes.dat};
		\addplot[only marks,black, mark=star,mark size=2.5pt, thick] table[x expr=\coordindex, y index=8] {data_figures/stove_nt_quantiles_red_sizes.dat};
		\addplot[only marks,cyan,mark=triangle*, mark size=2.5pt] table[x expr=\coordindex, y index=9] {data_figures/stove_nt_quantiles_red_sizes.dat};
		\addplot[only marks, orange, mark=asterisk, mark size=2.5pt,thick] table[x expr=\coordindex, y index=10] {data_figures/stove_nt_quantiles_red_sizes.dat};
		\addplot[only marks,teal, mark=diamond*, mark size=2.5pt] table[x expr=\coordindex, y index=11] {data_figures/stove_nt_quantiles_red_sizes.dat};
		\addplot[only marks, gray, mark= oplus*, mark size=1.8pt] table[x expr=\coordindex, y index=12] {data_figures/stove_nt_quantiles_red_sizes.dat};
		\legend{max,99,95,75,50,25,5,min};
		\end{axis}
		\end{tikzpicture}
	\end{minipage}
	\caption{\footnotesize Example 2: Quantiles of relative $L^2(I,H^1(D))$-error for $\nt=15$, varying numbers of $k$, $\nrand = 10$, $\tol = 10^{-8}$, and $100.000$ realizations (left). Quantiles of relative $L^2(I,H^1(D))$-error (middle) and reduced dimension (right) for varying numbers of $\nt$, $k = \nt-2$, $\nrand=10$, $\tol = 10^{-8}$, and $100.000$ realizations.
	}
	\label{figure_stove_k+nt+red_sizes}
\end{figure}
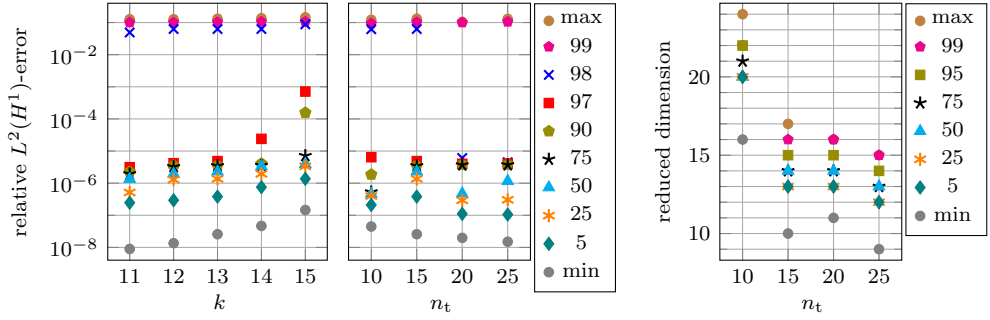

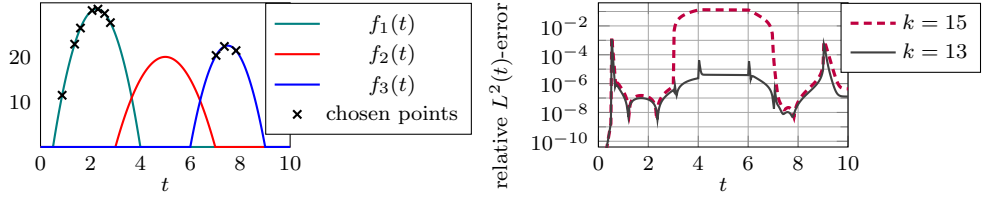
\begin{figure}
	\begin{tikzpicture}
	\begin{axis}[
	width=4.9cm,
	height=3.5cm,
	xmin=0,
	xmax= 10,
	ymin=0,
	ymax=32,
	legend style={at={(0.9,1)},anchor=north west,font=\footnotesize},
	xlabel=$t$,
	xtick = {0,2,4,6,8,10},
	xtick style = {white},
	ytick={10,20},
	ytick style = {white},
	xlabel= $t$,
	label style={font=\footnotesize},
	xlabel style = {yshift = 4},
	tick label style={font=\footnotesize}  
	]
	\addplot[solid, teal, thick] table[x index=0, y index=1] {data_figures/stove_grid_t+f_vals.dat};
	\addplot[solid, red, thick] table[x index=0, y index=2] {data_figures/stove_grid_t+f_vals.dat};
	\addplot[solid, blue, thick] table[x index=0, y index=3] {data_figures/stove_grid_t+f_vals.dat};
	\addplot[only marks,black, mark=x,mark size=2.2pt, thick] table[x index = 0, y index=1] {data_figures/example_k/chosen_points+f_vals.dat};
	\legend{$f_1(t)$, $f_2(t)$, $f_3(t)$, chosen points}
	\end{axis}
	\end{tikzpicture}
	\begin{tikzpicture}
	\begin{semilogyaxis}[
	width=4.9cm,
	height=3.5cm,
	xmin=0,
	xmax=10,
	ymin=4e-11,
	ymax=4e-1,
	legend style={at={(0.97,1)},anchor=north west,font=\footnotesize},
	grid=both,
	grid style={line width=.1pt, draw=gray!70},
	major grid style={line width=.2pt,draw=gray!70},
	xtick={0,2,4,6,8,10},
	ytick={1e-10,1e-8,1e-6, 1e-4, 1e-2},
	minor ytick={1e-11,1e-9,1e-7, 1e-5, 1e-3, 1e-1},
	xlabel= $t$,
	ylabel= relative $L^2(t)$-error,
	xlabel style = {yshift = 4},
	ylabel style = {xshift = -8},
	label style={font=\footnotesize},
	tick label style={font=\footnotesize}  
	]
	\addplot[densely dashed,purple,  very thick] table[x index = 0, y index = 2]{data_figures/example_k/grid_t+L2_errors_in_time.dat};
	\addplot[solid ,darkgray, thick] table[x index = 0, y index = 1]{data_figures/example_k/grid_t+L2_errors_in_time.dat};
	\legend{$k=15$, $k=13$}
	\end{semilogyaxis}
	\end{tikzpicture}
	\caption{\footnotesize Example 2: $\nrand=10$ randomly chosen points (left) and relative $L^2(t)$-error of corresponding reduced approximation for $\nt=15$, $\tol = 10^{-8}$ and $k=15$ vs. $k=13$ (right).
	}
	\label{figure_stove_comp_k}
\end{figure}

Moreover, we investigate how the number of collected snapshots determined via the parameter $k$ influences the approximation accuracy of the reduced basis. In \cref{figure_stove_k+nt+red_sizes} (left) we observe that the approximation quality significantly improves if we collect not only solution snapshots at local end time points ($k=15$), but at the (locally) last two or three time points ($k=14$ or $k=13$) as, for instance, $75\%$ of realizations have a relative $L^2(I,H^1(D))$-error below $10^{-5}$ for $k=15$, while for $k=14$ or $k=13$ already $90\%$ or $97\%$ of realizations have a relative $L^2(I,H^1(D))$-error below $10^{-5}$. \cref{figure_stove_comp_k} illustrates an explanation for this phenomenon. In the particular realization only time points in the first and last stove are drawn from the probability distribution. However, by choosing $k=13$ we include a solution snapshot that also detects the second stove and can thus significantly decrease the error in the time interval $(3,7)$ for this particular realization compared to $k=15$ as shown in \cref{figure_stove_comp_k} (right). For $k=12$ or $k=11$ we observe in \cref{figure_stove_k+nt+red_sizes} (left) only very slight improvements in the approximation quality compared to $k=13$. We therefore choose $k=\nt-2$ in all other tests in this subsection as a trade-off between approximation quality and size of the snapshot matrix and thus computational costs for its SVD.

Subsequently, we test how the approximation accuracy depends on the local oversampling size $\nt$. For all tested oversampling sizes, and in particular already for a small oversampling size of $\nt=10$, we observe in \cref{figure_stove_k+nt+red_sizes} (middle) that $97\%$ of realizations have a relative $L^2(I,H^1(D))$-error below $10^{-5}$ ($98\%$ for $\nt=20$ and $\nt=25$). Nevertheless, we see in \cref{figure_stove_k+nt+red_sizes} (right) that for $\nt=10$ the reduced basis is significantly larger compared to $\nt=15\,(20,25)$ as, for instance, in $95\%$ of cases the reduced dimension is larger than or equal to $20$ for $\nt=10$, while for $\nt=15, 20, 25$ it is smaller than or equal to $15$ in $95\%$ of cases. This can be explained by the fact that for smaller $\nt$ the randomness of the initial conditions has a larger influence compared to larger $\nt$ due to the exponential decay behavior of (local) solutions in time. In \cref{figure_stove_k+nt+red_sizes} (middle) we observe that for $\nt=20$ or $\nt=25$ the quality of approximation slightly improves compared to $\nt=15$, but we recall that also the computational costs increase with increasing $\nt$. For all other tests in this subsection we therefore choose $\nt=15$ as a good trade-off between approximation quality, computational costs, and size of the reduced basis.

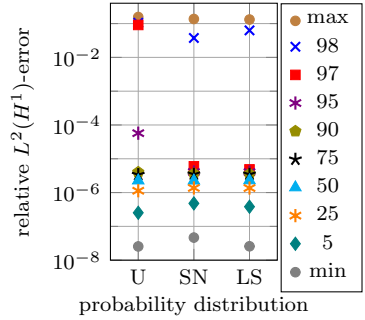
\begin{figure}
	\centering
	\begin{tikzpicture}
	\begin{semilogyaxis}[
	width=3.8cm,
	height=5cm,
	xmin=-0.5,
	xmax=2.5,
	ymin=1e-8,
	ymax=4e-1,
	legend style={at={(1.02,1)},anchor=north west,font=\footnotesize},
	grid=both,
	grid style={line width=.1pt, draw=gray!70},
	major grid style={line width=.2pt,draw=gray!70},
	xtick={0,1,2,3},
	ytick={1e-8,1e-6, 1e-4, 1e-2},
	minor ytick={1e-7, 1e-5, 1e-3, 1e-1},
	xticklabels={U, SN, LS},
	xlabel= probability distribution,
	xlabel style = {yshift = 2},
	ylabel= relative $L^2(H^1)$-error,
	label style={font=\footnotesize},
	tick label style={font=\footnotesize}  
	]
	\addplot[only marks, brown, mark=*, mark size=1.8pt] table[x expr=\coordindex, y index=0] {data_figures/stove_unif+frob+lev_quantiles_rel_L2H1.dat};
	\addplot[only marks, blue, mark=x, mark size=2.5pt,thick] table[x expr=\coordindex, y index=2] {data_figures/stove_unif+frob+lev_quantiles_rel_L2H1.dat};
	\addplot[only marks, red, mark=square*, mark size=1.8pt] table[x expr=\coordindex, y index=3] {data_figures/stove_unif+frob+lev_quantiles_rel_L2H1.dat};
	\addplot[only marks,violet,mark=asterisk, mark size=2.5pt,thick] table[x expr=\coordindex, y index=4] {data_figures/stove_unif+frob+lev_quantiles_rel_L2H1.dat};
	\addplot[only marks, olive, mark=pentagon*, mark size=2.2pt] table[x expr=\coordindex, y index=5] {data_figures/stove_unif+frob+lev_quantiles_rel_L2H1.dat};
	\addplot[only marks,black, mark=star,mark size=2.5pt, thick] table[x expr=\coordindex, y index=8] {data_figures/stove_unif+frob+lev_quantiles_rel_L2H1.dat};
	\addplot[only marks,cyan,mark=triangle*, mark size=2.5pt] table[x expr=\coordindex, y index=9] {data_figures/stove_unif+frob+lev_quantiles_rel_L2H1.dat};
	\addplot[only marks, orange, mark=asterisk, mark size=2.5pt,thick] table[x expr=\coordindex, y index=10] {data_figures/stove_unif+frob+lev_quantiles_rel_L2H1.dat};
	\addplot[only marks,teal, mark=diamond*, mark size=2.5pt] table[x expr=\coordindex, y index=11] {data_figures/stove_unif+frob+lev_quantiles_rel_L2H1.dat};
	\addplot[only marks, gray, mark= oplus*, mark size=1.8pt] table[x expr=\coordindex, y index=12] {data_figures/stove_unif+frob+lev_quantiles_rel_L2H1.dat};
	\legend{max,98,97,95,90,75,50,25,5,min};
	\end{semilogyaxis}
	\end{tikzpicture}
	\caption{\footnotesize Example 2: Quantiles of relative $L^2(I,H^1(D))$-error for $\nrand = 10$, $\nt = 15$, $k = 13$, $\tol = 10^{-8}$, $100.000$ realizations, and uniform (U), squared norm (SN), or leverage score (LS) probability distribution.
	}
	\label{figure_stove_comp_probs}
\end{figure}

\begin{figure}
	\begin{tikzpicture}
	\begin{semilogyaxis}[
	width=6.8cm,
	height=4.5cm,
	xmin=0,
	xmax=10,
	ymin=1e-10,
	ymax=1e-0,
	legend style={at={(0.1,1.15)},anchor=north west,font=\footnotesize},
	grid=both,
	grid style={line width=.1pt, draw=gray!70},
	major grid style={line width=.2pt,draw=gray!70},
	xtick={0,2,4,6,8,10},
	ytick={1e-10,1e-8,1e-6, 1e-4, 1e-2,1e-0},
	minor ytick={1e-11,1e-9,1e-7, 1e-5, 1e-3, 1e-1},
	xlabel= $t$,
	ylabel= relative $L^2(t)$-error,
	label style={font=\footnotesize},
	xlabel style = {yshift=4},
	tick label style={font=\footnotesize}  
	]
	\addplot[densely dashdotted,purple,  very thick] table[x index = 0, y index = 1]{data_figures/stove_compare_POD_random.dat};
	\addplot[densely dotted ,blue, very thick] table[x index = 0, y index = 3]{data_figures/stove_compare_POD_random_new.dat};
	\addplot[solid ,black, thick] table[x index = 0, y index = 2]{data_figures/stove_compare_POD_random_new.dat};
	\addplot[densely dashed,teal, very thick] table[x index = 0, y index = 1]{data_figures/stove_compare_POD_random_new.dat};
	\legend{POD, LS 95, LS 50, LS 5}
	\end{semilogyaxis}
	\end{tikzpicture}
	\begin{tikzpicture}
	\begin{semilogyaxis}[
	width=6.8cm,
	height=4.5cm,
	xmin=0,
	xmax=10,
	ymin=1e-10,
	ymax=1e-0,
	legend style={at={(0.1,1.15)},anchor=north west,font=\footnotesize},
	grid=both,
	grid style={line width=.1pt, draw=gray!70},
	major grid style={line width=.2pt,draw=gray!70},
	xtick={0,2,4,6,8,10},
	ytick={1e-10,1e-8,1e-6, 1e-4, 1e-2},
	minor ytick={1e-11,1e-9,1e-7, 1e-5, 1e-3, 1e-1},
	yticklabels={,,},
	xlabel= $t$,
	label style={font=\footnotesize},
	xlabel style = {yshift=4},
	tick label style={font=\footnotesize}  
	]
	\addplot[densely dashdotted,purple,  very thick] table[x index = 0, y index = 1]{data_figures/stove_compare_POD_random.dat};
	\addplot[densely dotted ,blue, very thick] table[x index = 0, y index = 6]{data_figures/stove_compare_POD_random_new.dat};
	\addplot[solid ,black, thick] table[x index = 0, y index = 5]{data_figures/stove_compare_POD_random_new.dat};
	\addplot[densely dashed ,teal, very thick] table[x index = 0, y index = 4]{data_figures/stove_compare_POD_random_new.dat};
	\legend{POD, U 95, U 50, U 5}
	\end{semilogyaxis}
	\end{tikzpicture}
	\caption{\footnotesize Example 2: $5$, $50$, and $95\%$ quantiles of relative $L^2(t)$-error for $\nrand=10$, $\nt=15$, $k=13$, $\tol = 10^{-8}$, $100.000$ realizations, and leverage score (LS) or uniform (U) probability distribution vs. relative $L^2(t)$-error for POD on solution trajectory of first $165$ of $300$ time steps with tolerance $10^{-8}$ (POD).
	}
	\label{figure_stove_comp_POD}
\end{figure}
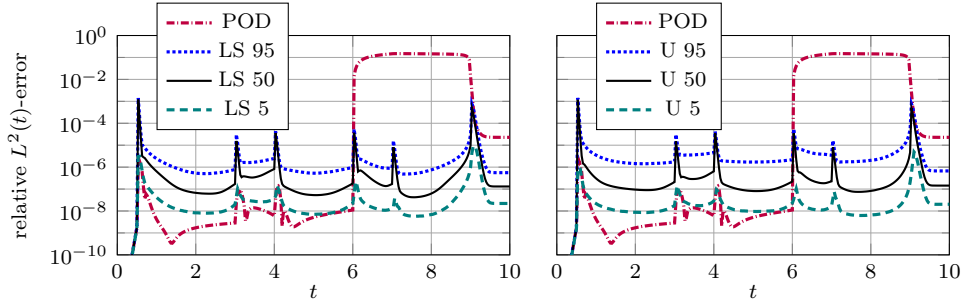

Next, we investigate how the choice of the probability distribution influences the approximation accuracy for the considered test case. To this end, we employ either the uniform, squared norm, or leverage score probability distribution as introduced in \cref{subsec_choice_prob} for drawing time points in \cref{randomized_basis_generation}. Both squared norms and leverage scores are computed from the right-hand side matrix $\mathbf{F}$. In \cref{figure_stove_comp_probs} we observe that the squared norm and the leverage score probability distribution yield a comparably good approximation quality and in both cases $97\%$ of realizations have a relative $L^2(I,H^1(D))$-error below $10^{-5}$. The uniform sampling approach also achieves a good approximation quality as, for instance, $95\%$ ($90\%$) of realizations have a relative $L^2(I,H^1(D))$-error below $10^{-4}$ ($10^{-5}$). Therefore, the results shown in \cref{figure_stove_comp_probs} indicate that for data that is spread over almost the whole time interval, aside from the leverage scores, the uniform or squared norm sampling approach can also lead to very good approximation results. Especially if the computer architecture allows for many parallel computations, one can thus save expenses for computing an SVD of a potentially large data matrix and instead draw a large number of time points from the uniform (or squared norm) probability distribution as discussed in \cref{subsec_disc_choice_prob}.

As the POD is a well-established tool for compressing and reducing time trajectories, we compare in \cref{figure_stove_comp_POD} quantiles of the relative $L^2(t)$-approximation error for both the uniform and the leverage score sampling approach with the relative $L^2(t)$-error of the approximation via POD on the solution trajectory of the first $165$ of $300$ time steps. In this way, we compare the standard POD approach with the randomized approach based on the same computational budget of $(\nrand +1)\cdot \nt = (10+1)\cdot 15 = 165$ time steps. While we also have to compute the SVD of the solution trajectory for the POD and the SVD of the data matrix for the leverage score sampling approach, we here focus on equaling the budget based on the time stepping, which likely dominates the computational costs in complex applications. We observe in \cref{figure_stove_comp_POD} that the POD approach, in contrast to the randomized approach in at least $95\%$ of cases, is not able to detect the third stove and thus yields a much larger relative approximation error in the time interval $(6,9)$ compared to both the uniform and the leverage score sampling approach. Moreover, in the randomized approach the local PDE simulations can be performed in parallel, while for the POD the first $165$ time instances of the solution trajectory have to be computed sequentially.

\subsection{Advection-diffusion problem}
\label{subsec_num_ex_2}

\begin{figure}
	\centering
	\begin{tikzpicture}
	\begin{semilogyaxis}[
	width=5cm,
	height=5cm,
	xmin=0,
	xmax=25,
	ymin=3e-15,
	ymax=1e-3,
	legend style={at={(1.05,1)},anchor=north west,font=\footnotesize},
	grid=both,
	grid style={line width=.1pt, draw=gray!50},
	major grid style={line width=.2pt,draw=gray!50},
	xtick={0,5,10,15,20,25},
	minor xtick = {},
	ytick={1e-20,1e-16,1e-14,1e-12,1e-10, 1e-8, 1e-6, 1e-4},
	minor ytick={1e-21,1e-19,1e-18,1e-17,1e-15,1e-14,1e-13,1e-11,1e-10,1e-9, 1e-7,1e-6, 1e-5, 1e-3},
	xlabel= $n$,
	ylabel = $\sigma^{(n+1)}_{t_i \rightarrow t_i+ 0.1}$,
	label style={font=\footnotesize},
	xlabel style = {yshift = 3},
	tick label style={font=\footnotesize}  
	]
	\addplot+[thick, solid, red, mark = none] table[x index = 0,y index=1]{data_figures/advection_transfer_singular_vals_nt=10.dat};
	\addplot+[ thick, mark=o, mark repeat = 2, mark size = 1.6pt, mark options={solid}, densely dashed, black] table[x index = 0,y index=2]{data_figures/advection_transfer_singular_vals_nt=10.dat};
	\addplot+[ thick, mark=asterisk, mark repeat = 2,mark size = 2.4pt, blue, densely dotted, mark options={solid}] table[x index = 0,y index=3]{data_figures/advection_transfer_singular_vals_nt=10.dat};
	\addplot+[ thick, mark=diamond, mark repeat = 2, densely dashdotted, cyan, mark options={solid}] table[x index = 0,y index=4]{data_figures/advection_transfer_singular_vals_nt=10.dat};
	\addplot+[very thick, teal, densely dotted, mark = none] table[x index = 0,y index=5]{data_figures/advection_transfer_singular_vals_nt=10.dat};
	\legend{$b_1=0$,$b_1=10$,$b_1=25$,$b_1=50$,$b_1=100$}
	\end{semilogyaxis}
	\end{tikzpicture}
	\caption{\footnotesize Example 3a: Singular value decay of transfer operator $T_{t_i \rightarrow t_i+ 0.1}$ ($0\leq i \leq 491$) for different values of advection in $x$-direction and a constant diffusion of $1$.
	}
	\label{figure_advection_transfer_singular_vals}
\end{figure}
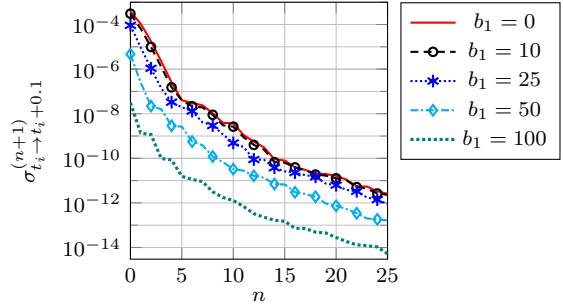

\begin{figure}
	\begin{minipage}{13cm}
		\centering
		{\footnotesize initial conditions at time point $t_i$}\\
		\vspace{1pt}
		\includegraphics[width = 6cm]{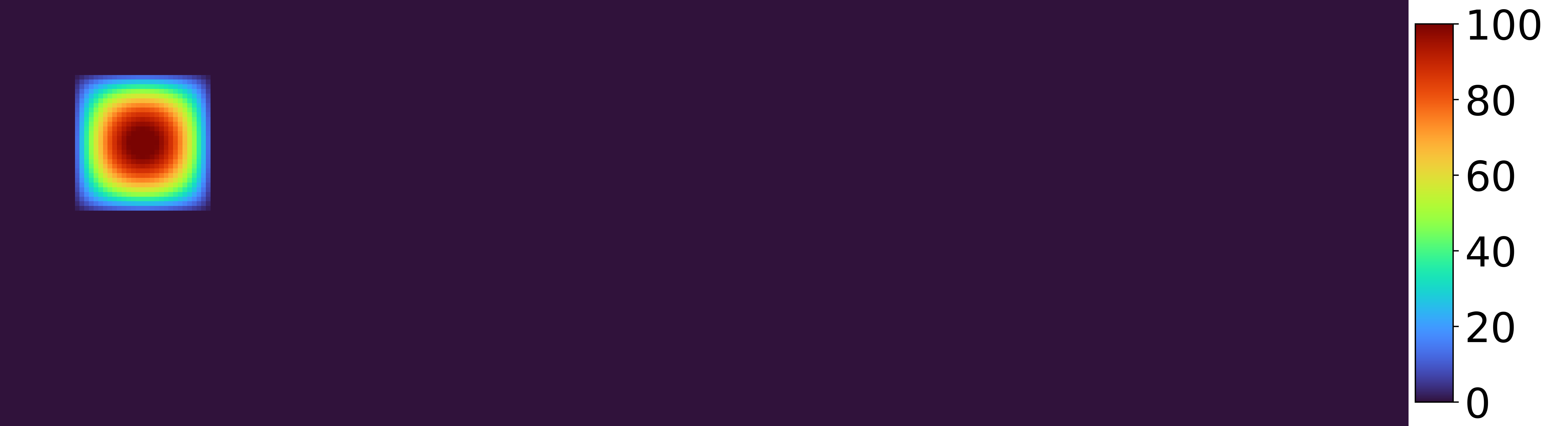}
		\hspace{2pt}
		\includegraphics[width = 6cm]{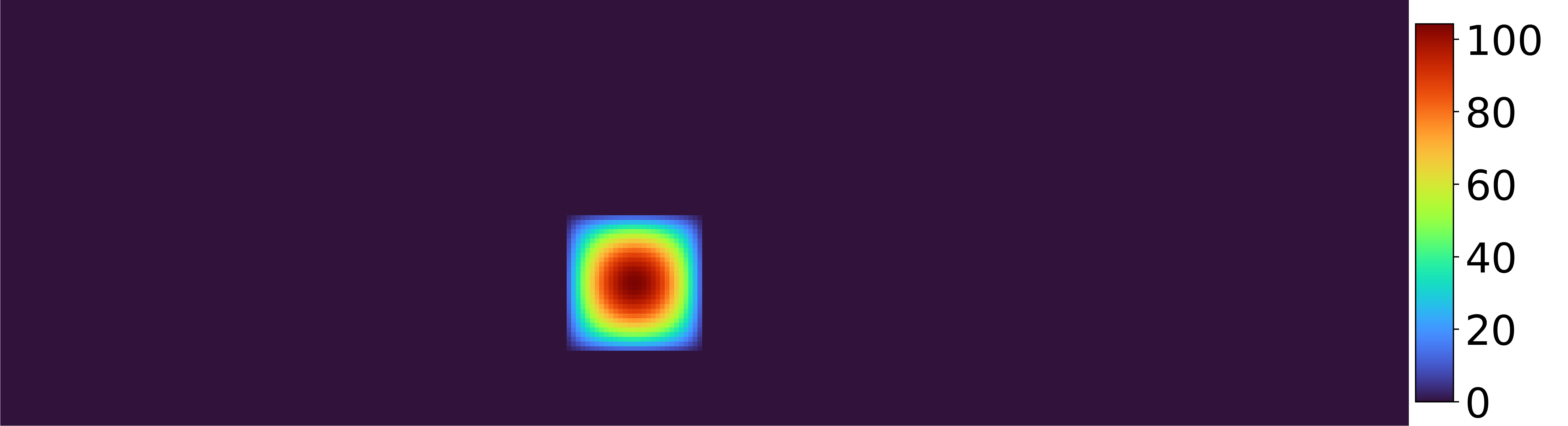}\\
		\vspace{3pt}
	\end{minipage}
	\begin{minipage}{13cm}
		\centering
		{\footnotesize solution at time point $t_i+0.1$ for $a=0$}\\
		\vspace{1pt}
		\includegraphics[width = 6cm]{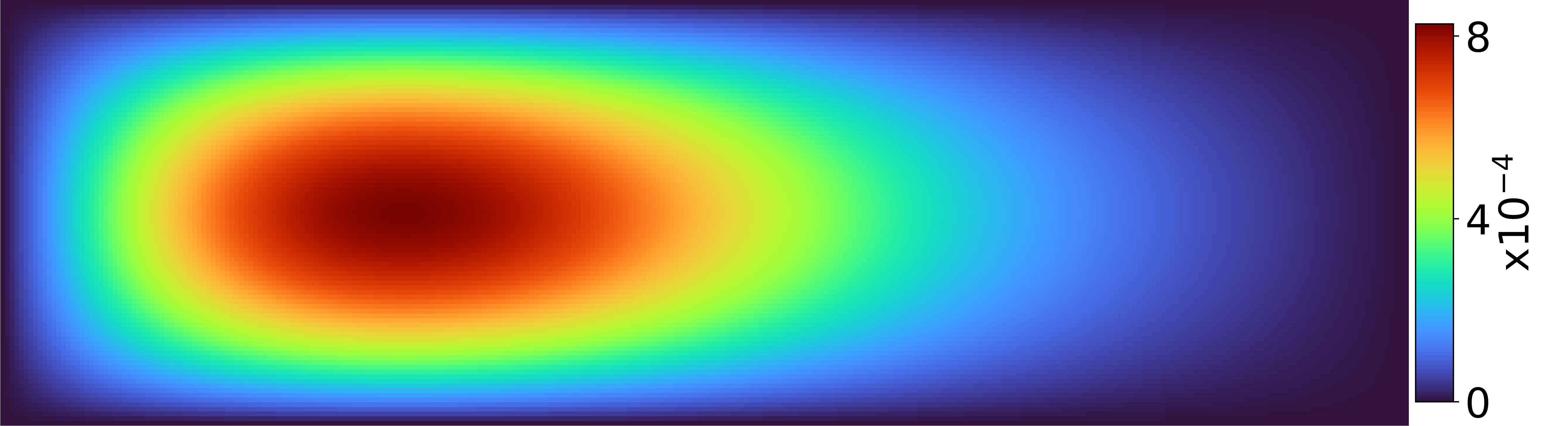}
		\hspace{2pt}
		\includegraphics[width = 6cm]{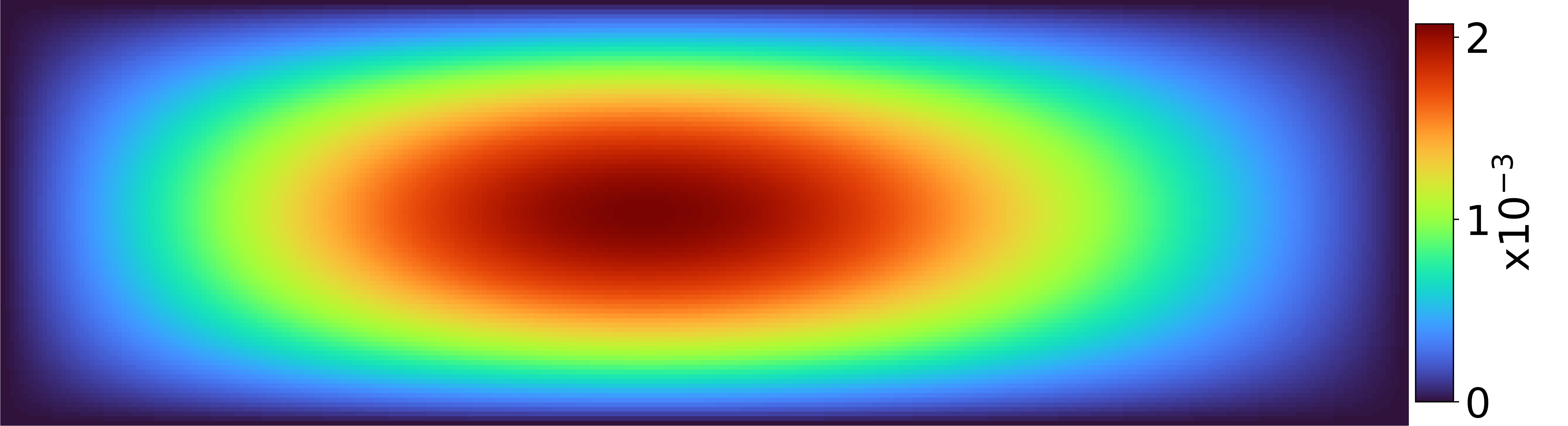}\\
		\vspace{3pt}
	\end{minipage}
	\begin{minipage}{13cm}
		\centering
		{\footnotesize solution at time point $t_i+0.1$ for $a=100$}\\
		\vspace{1pt}
		\includegraphics[width = 6cm]{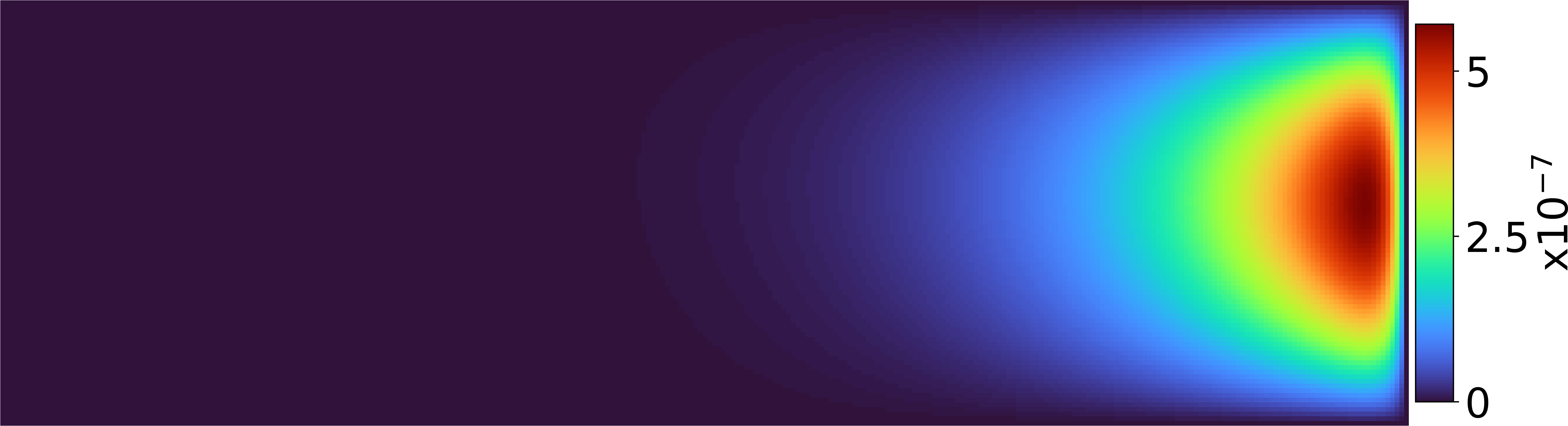}
		\hspace{2pt}
		\includegraphics[width = 6cm]{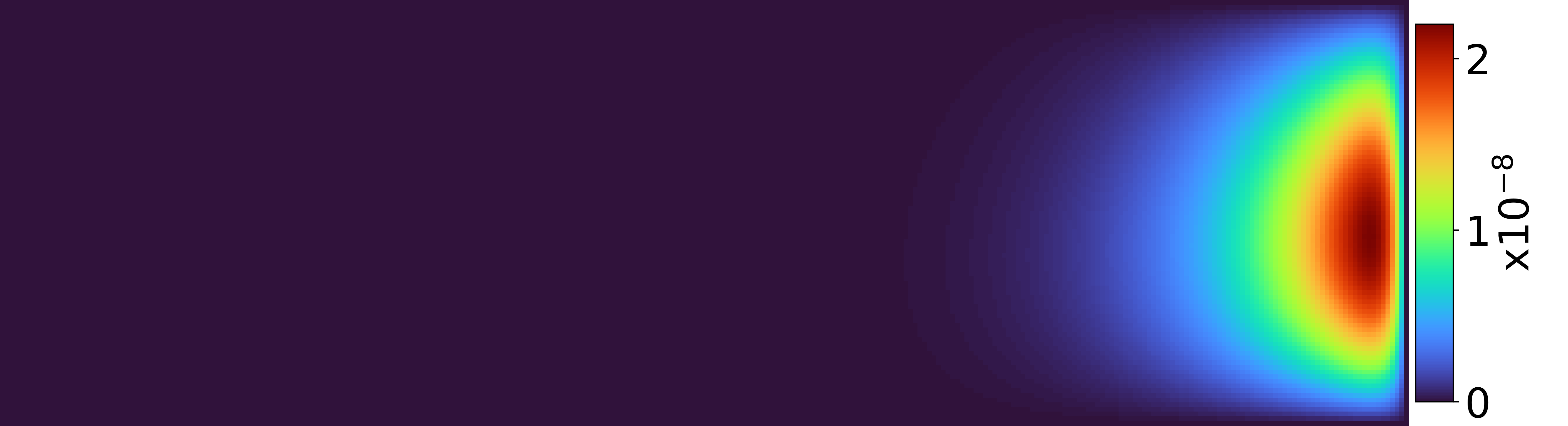}
	\end{minipage}
	\vspace{2pt}
	\caption{\footnotesize Example 3a: Local solutions for different initial conditions and advection $a=0$ vs. $a=100$.}
	\label{figure_advection_example_solutions}
\end{figure}

Here, we consider an advection-diffusion problem (\cref{PDE_ex} with $ c \equiv 0$) and first investigate the following numerical experiment, which we refer to as Example 3a: We choose $I=(0,5)$, $D=(0,1)\times (0,0.3)$, $\Sigma_N=\emptyset$, and discretize the spatial domain $D$ with a regular quadrilateral mesh with mesh size $1/300$ in both directions. For the implicit Euler method, we use an equidistant time step size of $1/100$. Furthermore, we consider the advection field $b=(b_1,\, 0)^\top $ for varying constants $b_1 \in \lb 0,10,25,50,100\rb$ and choose the conductivity coefficient $\kappa \equiv 1$. First, we observe in \cref{figure_advection_transfer_singular_vals} a very fast, exponential decay of the singular values of the transfer operators mapping arbitrary initial conditions at time point $t_i$ to the local solution at time point $t_i + 0.1$. Moreover, we see that the singular values are smaller for higher values of advection. This can be traced back to the fact that for a higher value of advection (i.e. $b_1=100$) initial conditions at time $t_i$ move quickly in $x$-direction and the corresponding local solutions at time $t_i+0.1$ have support only in the right half of the spatial domain close to the right boundary, see \cref{figure_advection_example_solutions} (bottom). In contrast, the plots in \cref{figure_advection_example_solutions} (middle) show that in the absence of advection (i.e. $b_1=0$) the local solutions corresponding to different initial conditions differ more from each other compared to the case $b_1=100$ and spread over the whole spatial domain. We can thus infer for this test case that the range of the transfer operator (i.e. the local solution space at a point of time) is extremely low-dimensional for higher values of advection and therefore well amenable to approximation.

\begin{figure}
	\begin{minipage}{13cm}
		\centering
		{\footnotesize \hspace{-0.5cm} solution at $t=0.5$ \hspace{4cm} solution at $t=1.8$}\\
		\vspace{2pt}
		\includegraphics[width = 6cm]{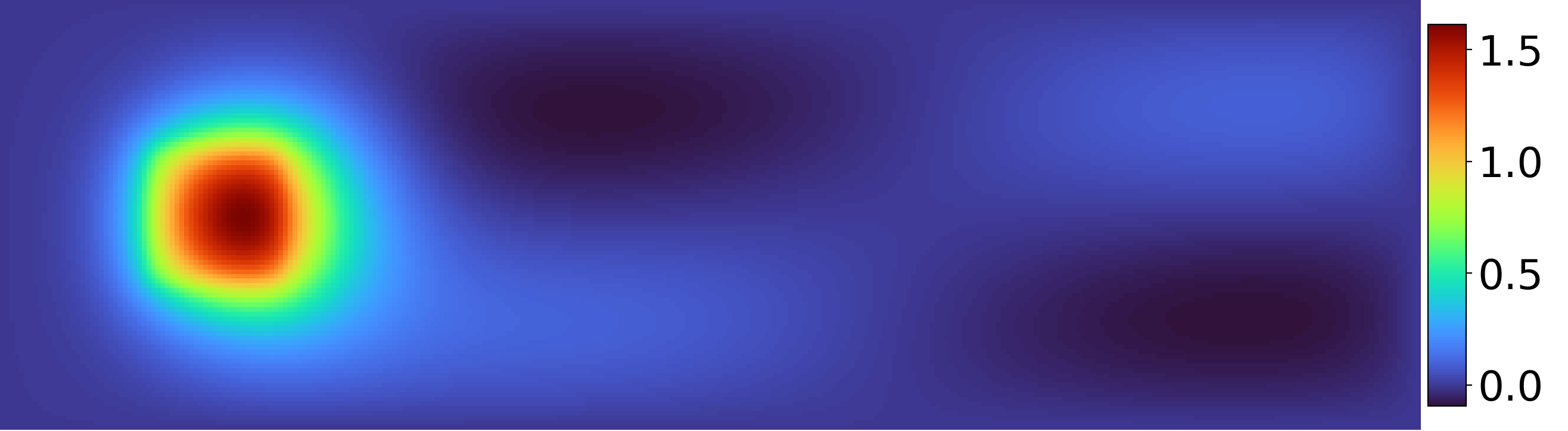}
		\hspace{2pt}
		\includegraphics[width = 6cm]{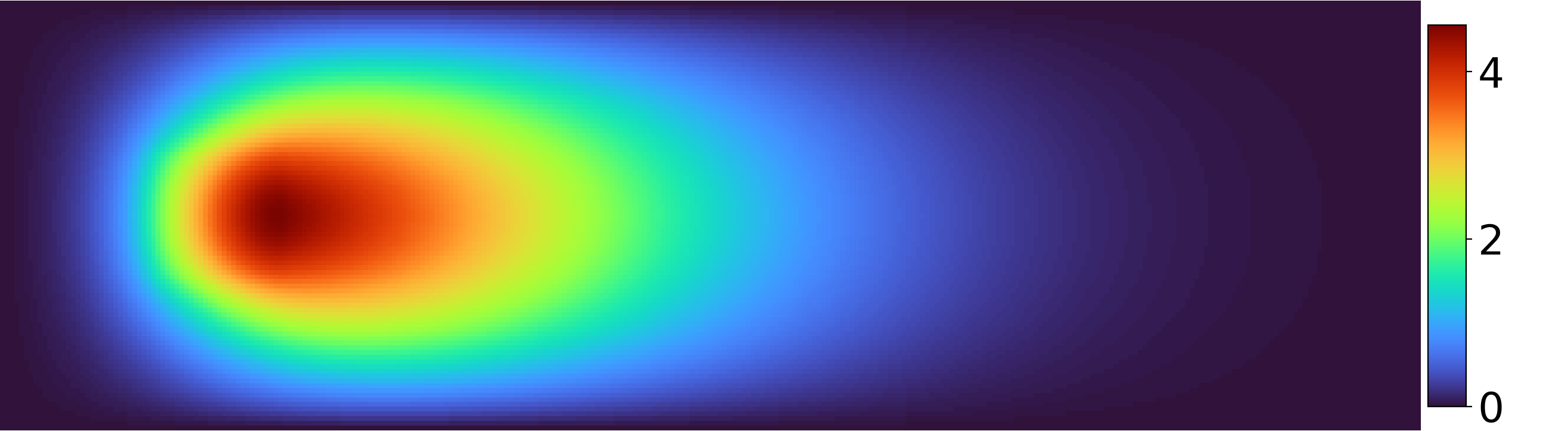}\\
		\vspace{3pt}
	\end{minipage}
	\begin{minipage}{13cm}
		\centering
		{\footnotesize \hspace{-0.6cm} solution at $t=3$ \hspace{4.2cm} solution at $t=5$}\\
		\vspace{2pt}
		\includegraphics[width = 6cm]{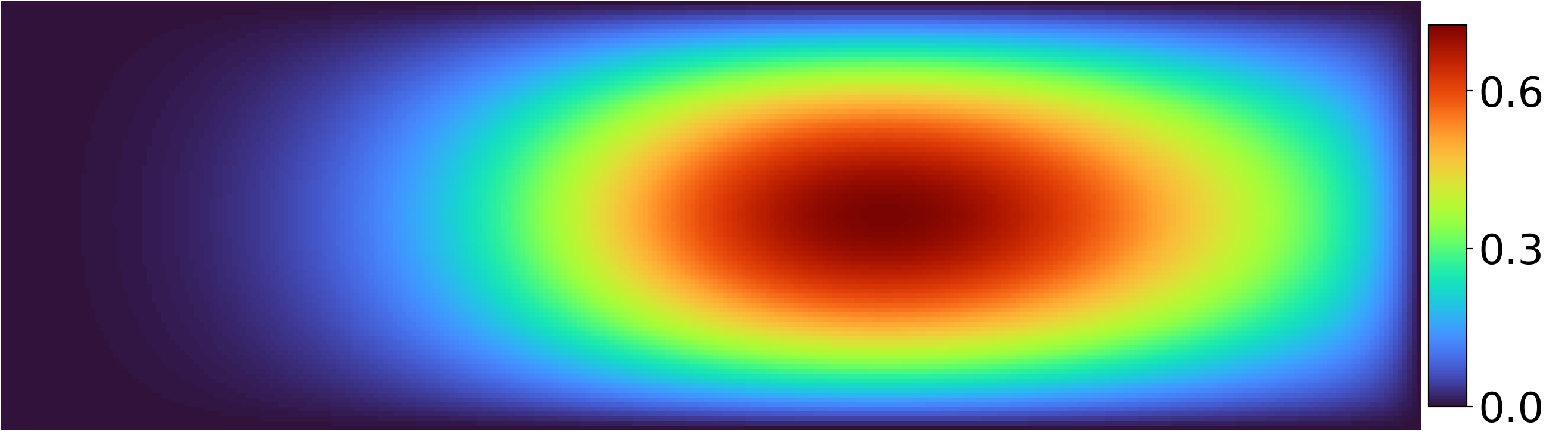}
		\hspace{2pt}
		\includegraphics[width = 6cm]{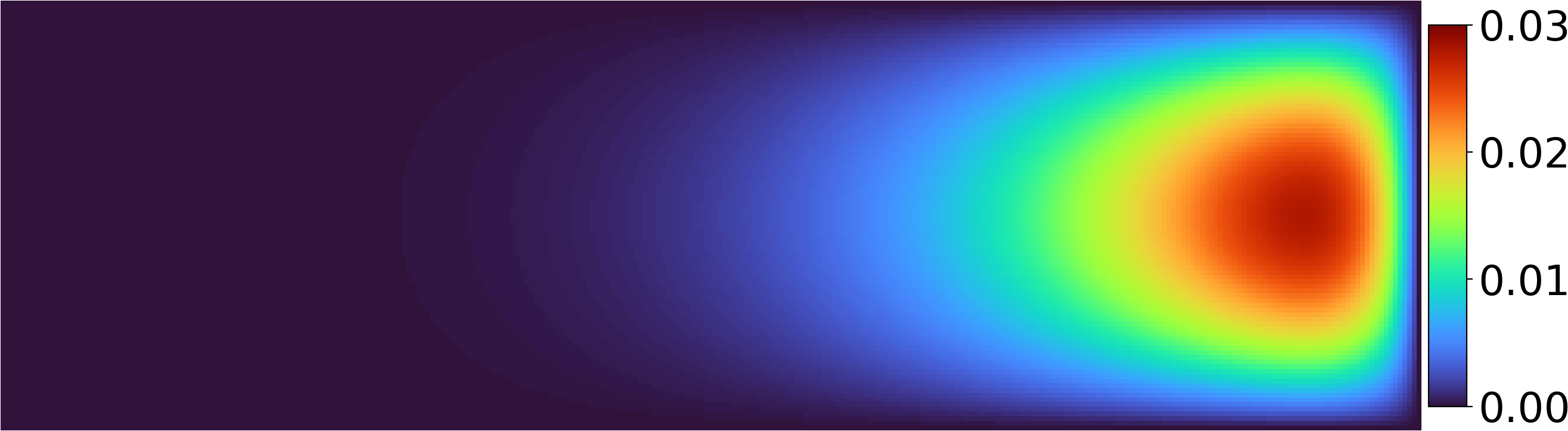}\\
		\vspace{3pt}
	\end{minipage}
	\vspace{2pt}
	\caption{\footnotesize Example 3b: Solution evaluated at different points in time. The shades in the middle and right part of the spatial domain at time point $t=0.5$ are the vanishing initial conditions.}
	\label{figure_advection_solutions}
\end{figure}

\begin{figure}
	\centering
	\begin{tikzpicture}
	\begin{axis}[
	title={$f(x,y)$},
	width=6cm,
	height=3cm,
	xmin=0,
	xmax=1,
	ymin=0,
	ymax=0.3,
	xlabel=$x$,
	ylabel=$y$,
	xtick = {0,1},
	ytick={0,0.3},
	tick style ={color=white},
	title style = {font=\footnotesize, yshift = -3},
	label style={font=\footnotesize},
	xlabel style = {yshift = 8},
	ylabel style = {yshift = -8},
	tick label style={font=\scriptsize}  
	]
	\draw[fill=lightgray] (0,0) rectangle (1,0.3);
	\draw[fill=teal] (0.1,0.1) rectangle (0.2,0.2);
	\end{axis}
	\end{tikzpicture}
	\begin{tikzpicture}
	\begin{axis}[
	title={$f(t)$},
	title style = {font=\footnotesize, yshift = -3},
	width=4cm,
	height=3cm,
	xmin=0,
	xmax= 5,
	ymin=0,
	ymax=42,
	xlabel=$t$,
	xtick = {0,5},
	xtick style = {white},
	ytick={20,40},
	ytick style = {white},
	xlabel style = {yshift = 8},
	label style={font=\footnotesize},
	tick label style={font=\scriptsize}  
	]
	\addplot[solid, teal, thick] table[x index=0, y index=1] {data_figures/advection_grid_t+_rhs_t+leverage_scores.dat};
	\end{axis}
	\end{tikzpicture}
	\begin{tikzpicture}
	\begin{axis}[
	title={Leverage scores},
	title style = {font=\footnotesize, yshift = -2},
	width=4cm,
	height=3cm,
	xmin=0,
	xmax= 5,
	ymin=0,
	ymax=0.012,
	xlabel=$t$,
	xtick = {0,5},
	xtick style = {white},
	ytick={0,0.01},
	ytick style = {white},
	label style={font=\footnotesize},
	xlabel style = {yshift = 8},
	tick label style={font=\scriptsize}  
	]
	\addplot[solid, purple, thick] table[x index=0, y index=2] {data_figures/advection_grid_t+_rhs_t+leverage_scores.dat};
	\end{axis}
	\end{tikzpicture}
	\caption{\footnotesize Example 3b: Source term $f(t,x,y)= f(t) f(x,y)$ and corresponding rank-$1$ leverage score probability distribution (identical to squared norms). Gray equates to $0$, green equates to $1$ (left).
	}
	\label{figure_advection_rhs}
\end{figure}
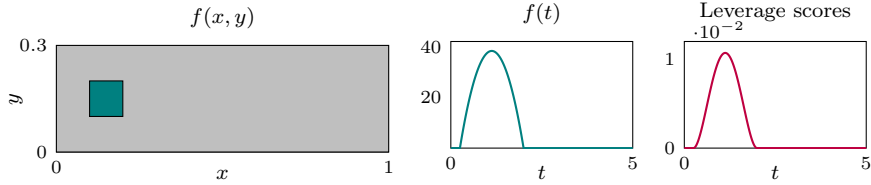

Next, we consider an advection-diffusion problem with a solution that is constantly moving in time as shown in \cref{figure_advection_solutions}, which we refer to as Example 3b. We choose $I$, $D$, $\Sigma_N$, and the discretization as in Example 3a. Moreover, we choose the advection field $b=(0.3,\, 0)^\top $, the conductivity coefficient $\kappa = 0.01$, the initial condition $u_0(x,y)=\sum_{i=1}^{3} \sin(i \pi x) \sin(i \pi y)$, and the source term $f(t,x,y)= f(t) f(x,y)$ as depicted in \cref{figure_advection_rhs} (left). For drawing time points in \cref{randomized_basis_generation}, we use the rank-$1$ leverage score probability distribution (see \cref{figure_advection_rhs} (right)) computed from the right-hand side matrix $\mathbf{F}$, whose columns are the right hand-side vectors $\mathbf{F}_{l}$ for $l=0,\ldots,500$ \cref{FE_matrices}. We emphasize that the latter distribution is identical to the squared norm probability distribution for this test case. As the solution is constantly moving in time (cf. \cref{figure_advection_solutions}), we additionally sample uniformly from the global time grid to capture the global advection in time. We denote the number of chosen time points by $\nrand^\text{rhs}$ and $\nrand^\text{advec}$, respectively.

First, we observe in \cref{figure_advection_global_singular_vals+nt} (left) that the singular values of the transfer operator decay exponentially, but more slowly compared to Example 2, 3a, and 4. This is mainly due to the lower diffusion compared to Example 2, 3a, and 4, which results in a slower propagation of (local) solutions in time. However, we see that the singular values decrease faster with an increasing local oversampling size and also observe in \cref{figure_advection_global_singular_vals+nt} (right) that the approximation accuracy increases with increasing $\nt$: While for $\nt=15$ $95\%$ of realizations yield an approximation with a relative $L^2(I,H^1(D))$-error above $10^{-2}$, for $\nt=30$ or $\nt=45$ already $99\%$ of realizations have a relative error below $10^{-3}$ or $2\cdot 10^{-4}$. For all other tests in this subsection we therefore choose $\nt = 30$ as a good trade-off between approximation quality and computational costs.

\begin{figure}
	\centering
	\begin{tikzpicture}
	\begin{semilogyaxis}[
	width=4.5cm,
	height=5cm,
	xmin=0,
	xmax=35,
	ymin=3e-4,
	ymax=1e-0,
	legend style={at={(1.02,1)},anchor=north west,font=\footnotesize},
	grid=both,
	grid style={line width=.1pt, draw=gray!40},
	major grid style={line width=.2pt,draw=gray!70},
	xtick={0,5,10,15,20,25,30,35},
	minor xtick = {},
	ytick={1e-4,1e-3,1e-2,1e-1,1e-0},
	xlabel= $n$,
	label style={font=\footnotesize},
	xlabel style = {yshift = 3},
	tick label style={font=\footnotesize}  
	]
	\addplot+[thick, solid, purple, mark = none] table[x index = 0,y index=1]{data_figures/advection_global_transfer_singular_vals_nt=15-30-45.dat};
	\addplot+[ thick, mark=asterisk, mark repeat = 2,mark size = 2.4pt, blue, densely dotted, mark options={solid}] table[x index = 0,y index=2]{data_figures/advection_global_transfer_singular_vals_nt=15-30-45.dat};
	\addplot+[ thick, mark=diamond, mark repeat = 2, densely dashdotted, cyan, mark options={solid}] table[x index = 0,y index=3]{data_figures/advection_global_transfer_singular_vals_nt=15-30-45.dat};
	\legend{$\sigma^{(n+1)}_{t_i \rightarrow t_i+ 0.15}$,$\sigma^{(n+1)}_{t_i \rightarrow t_i+ 0.3}$,$\sigma^{(n+1)}_{t_i \rightarrow t_i+ 0.45}$}
	\end{semilogyaxis}
	\end{tikzpicture}
	\hspace{0.1cm}
	\begin{tikzpicture}
	\begin{semilogyaxis}[
	width=3.8cm,
	height=5cm,
	xmin=-0.5,
	xmax=2.5,
	ymin=1e-5,
	ymax=1e-1,
	legend style={at={(1.02,1)},anchor=north west,font=\footnotesize},
	grid=both,
	grid style={line width=.1pt, draw=gray!40},
	major grid style={line width=.2pt,draw=gray!70},
	xtick={0,1,2,3},
	ytick={1e-5,1e-4, 1e-3, 1e-2,1e-1,1e-0},
	xticklabels={15, 30, 45},
	xlabel= $\nt$,
	xlabel style = {yshift = 3},
	ylabel= relative $L^2(H^1)$-error,
	label style={font=\footnotesize},
	tick label style={font=\footnotesize}  
	]
	\addplot[only marks, brown, mark=*, mark size=1.8pt] table[x expr=\coordindex, y index=0] {data_figures/advection_quantiles_nt=15-30-45_rel_L2H1.dat};
	\addplot[only marks, blue, mark=x, mark size=2.5pt,thick] table[x expr=\coordindex, y index=1] {data_figures/advection_quantiles_nt=15-30-45_rel_L2H1.dat};
	\addplot[only marks,red, mark=pentagon*, mark size=2pt] table[x expr=\coordindex, y index=2] {data_figures/advection_quantiles_nt=15-30-45_rel_L2H1.dat};
	\addplot[only marks, black,mark=star, mark size=2.5pt,thick] table[x expr=\coordindex, y index=3] {data_figures/advection_quantiles_nt=15-30-45_rel_L2H1.dat};
	\addplot[only marks,cyan,mark=triangle*, mark size=2.5pt] table[x expr=\coordindex, y index=4] {data_figures/advection_quantiles_nt=15-30-45_rel_L2H1.dat};
	\addplot[only marks, orange, mark=asterisk, mark size=2.5pt,thick] table[x expr=\coordindex, y index=5] {data_figures/advection_quantiles_nt=15-30-45_rel_L2H1.dat};
	\addplot[only marks,teal, mark=diamond*, mark size=2.5pt] table[x expr=\coordindex, y index=6] {data_figures/advection_quantiles_nt=15-30-45_rel_L2H1.dat};
	\addplot[only marks, gray, mark= oplus*, mark size=1.8pt] table[x expr=\coordindex, y index=7] {data_figures/advection_quantiles_nt=15-30-45_rel_L2H1.dat};
	\legend{max,99,95,75,50,25,5,min};
	\end{semilogyaxis}
	\end{tikzpicture}
	\caption{\footnotesize Example 3b: Singular value decay of transfer operators $T_{t_i \rightarrow t_i+ 0.15}$, $0\leq i \leq 486$, $T_{t_i \rightarrow t_i+ 0.3}$, $0\leq i \leq 471$, and $T_{t_i \rightarrow t_i+ 0.45}$, $0\leq i \leq 466$, (left). Quantiles of relative $L^2(I,H^1(D))$-error for varying numbers of $\nt$, $k = \nt-6$, $\nrand^\text{rhs}=10$, $\nrand^\text{advec}=20$, $\tol = 10^{-8}$, and $10.000$ realizations (right). Singular values decay slower due to lower diffusion compared to Example 2, 3a, and 4.
	}
	\label{figure_advection_global_singular_vals+nt}
\end{figure}
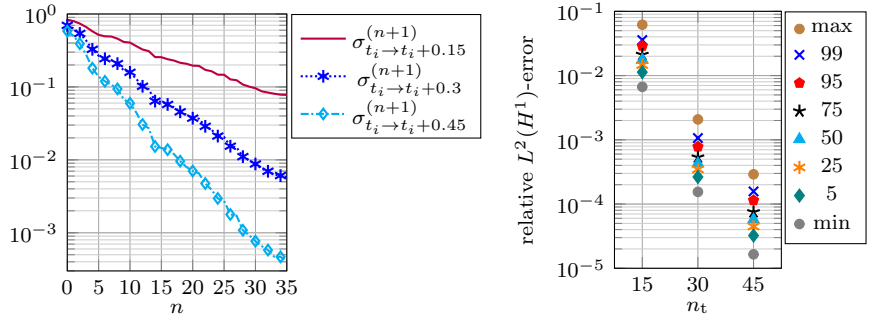

Next, we investigate how the approximation accuracy is influenced by the number of time points selected from the uniform sampling distribution and how many samples are required in order to sufficiently capture the advection in the global problem. In \cref{figure_advection_global_nrand_coeff+k} (left) we see that for $\nrand^\text{advec}= 0$ or $\nrand^\text{advec}=5$ in $95\%$ of cases the relative $L^2(I,H^1(D))$-error is above $2\cdot 10^{-2}$ or $7\cdot 10^{-3}$, while for $\nrand^\text{advec}= 20$ the relative error is already below $10^{-3}$ in $99\%$ of cases. If we choose $\nt>20$ we see in \cref{figure_advection_global_nrand_coeff+k} (left) that the approximation accuracy still slightly improves compared to smaller $\nrand^\text{advec}$. We therefore choose $\nrand^\text{advec}=20$ in all other tests in this subsection.
	
Finally, we test how the approximation accuracy depends on the number of collected snapshots determined via $k$. In \cref{figure_advection_global_nrand_coeff+k} (right) we observe that the approximation quality significantly improves if we collect not only solution snapshots at local end time points ($k = 30$), but at the (locally) last three, five, or seven time points ($k = 28,26,24$) as, for instance, for $k=30$ all realizations have a relative $L^2(I,H^1(D))$-error above $3\cdot 10^{-2}$, while for $k=28$ or $k=24$ already $99\%$ of realizations have a relative error below $4\cdot 10^{-3}$ or $10^{-3}$. This can be explained by the fact that by choosing a smaller $k$ the propagation of the solution can be captured more accurately. For $k = 22$ or $k = 20$ we observe in \cref{figure_advection_global_nrand_coeff+k} (right) only very slight improvements in the approximation quality compared to $k = 24$. We therefore choose $k = \nt-6$ in all other tests in this subsection.

\begin{figure}
	\begin{tikzpicture}
	\begin{semilogyaxis}[
	width=6.5cm,
	height=5cm,
	xmin=-0.5,
	xmax=8.5,
	ymin=3e-5,
	ymax=1e-1,
	legend style={at={(1.02,1)},anchor=north west,font=\footnotesize},
	grid=both,
	grid style={line width=.1pt, draw=gray!40},
	major grid style={line width=.2pt,draw=gray!70},
	xtick={0,1,2,3,4,5,6,7,8},
	ytick={1e-5,1e-4, 1e-3, 1e-2,1e-1,1e-0},
	xticklabels={0,5,10,15,20,25,30,35,40},
	xlabel= $\nrand^\text{advec}$,
	xlabel style = {yshift = 4},
	ylabel= relative $L^2(H^1)$-error,
	label style={font=\footnotesize},
	tick label style={font=\footnotesize}  
	]
	\addplot[only marks, brown, mark=*, mark size=1.8pt] table[x expr=\coordindex, y index=0] {data_figures/advection_quantiles_nrand_coeff_rel_L2H1.dat};
	\addplot[only marks, blue, mark=x, mark size=2.5pt,thick] table[x expr=\coordindex, y index=1] {data_figures/advection_quantiles_nrand_coeff_rel_L2H1.dat};
	\addplot[only marks,red, mark=pentagon*, mark size=2pt] table[x expr=\coordindex, y index=2] {data_figures/advection_quantiles_nrand_coeff_rel_L2H1.dat};
	\addplot[only marks, black,mark=star, mark size=2.5pt,thick] table[x expr=\coordindex, y index=3] {data_figures/advection_quantiles_nrand_coeff_rel_L2H1.dat};
	\addplot[only marks,cyan,mark=triangle*, mark size=2.5pt] table[x expr=\coordindex, y index=4] {data_figures/advection_quantiles_nrand_coeff_rel_L2H1.dat};
	\addplot[only marks, orange, mark=asterisk, mark size=2.5pt,thick] table[x expr=\coordindex, y index=5] {data_figures/advection_quantiles_nrand_coeff_rel_L2H1.dat};
	\addplot[only marks,teal, mark=diamond*, mark size=2.5pt] table[x expr=\coordindex, y index=6] {data_figures/advection_quantiles_nrand_coeff_rel_L2H1.dat};
	\addplot[only marks, gray, mark= oplus*, mark size=1.8pt] table[x expr=\coordindex, y index=7] {data_figures/advection_quantiles_nrand_coeff_rel_L2H1.dat};
	\end{semilogyaxis}
	\end{tikzpicture}
	\begin{tikzpicture}
	\begin{semilogyaxis}[
	width=5cm,
	height=5cm,
	xmin=-0.5,
	xmax=5.5,
	ymin=1e-4,
	ymax=2e-1,
	legend style={at={(1.05,1)},anchor=north west,font=\footnotesize},
	grid=both,
	grid style={line width=.1pt, draw=gray!40},
	major grid style={line width=.2pt,draw=gray!70},
	xtick={0,1,2,3,4,5},
	ytick={1e-5,1e-4, 1e-3, 1e-2,1e-1,1e-0},
	xticklabels={20,22,24,26,28,30},
	xlabel= $k$,
	label style={font=\footnotesize},
	tick label style={font=\footnotesize}  
	]
	\addplot[only marks, brown, mark=*, mark size=1.8pt] table[x expr=\coordindex, y index=0] {data_figures/advection_quantiles_k_rel_L2H1.dat};
	\addplot[only marks, blue, mark=x, mark size=2.5pt,thick] table[x expr=\coordindex, y index=1] {data_figures/advection_quantiles_k_rel_L2H1.dat};
	\addplot[only marks,red, mark=pentagon*, mark size=2pt] table[x expr=\coordindex, y index=2] {data_figures/advection_quantiles_k_rel_L2H1.dat};
	\addplot[only marks, black,mark=star, mark size=2.5pt,thick] table[x expr=\coordindex, y index=3] {data_figures/advection_quantiles_k_rel_L2H1.dat};
	\addplot[only marks,cyan,mark=triangle*, mark size=2.5pt] table[x expr=\coordindex, y index=4] {data_figures/advection_quantiles_k_rel_L2H1.dat};
	\addplot[only marks, orange, mark=asterisk, mark size=2.5pt,thick] table[x expr=\coordindex, y index=5] {data_figures/advection_quantiles_k_rel_L2H1.dat};
	\addplot[only marks,teal, mark=diamond*, mark size=2.5pt] table[x expr=\coordindex, y index=6] {data_figures/advection_quantiles_k_rel_L2H1.dat};
	\addplot[only marks, gray, mark= oplus*, mark size=1.8pt] table[x expr=\coordindex, y index=7] {data_figures/advection_quantiles_k_rel_L2H1.dat};
	\legend{max,99,95,75,50,25,5,min};
	\end{semilogyaxis}
	\end{tikzpicture}
	\caption{\footnotesize Example 3b: Quantiles of relative $L^2(I,H^1(D))$-error for varying numbers of $\nrand^\text{advec}$, $\nt=30$, $k=24$, $\nrand^\text{rhs}=10$, $\tol = 10^{-8}$, and $10.000$ realizations (left). Quantiles of relative $L^2(I,H^1(D))$-error for $\nt=30$, varying numbers of $k$, $\nrand^\text{rhs}=10$, $\nrand^\text{advec}=20$, $\tol = 10^{-8}$, and $10.000$ realizations (right).
	}
	\label{figure_advection_global_nrand_coeff+k}
\end{figure}
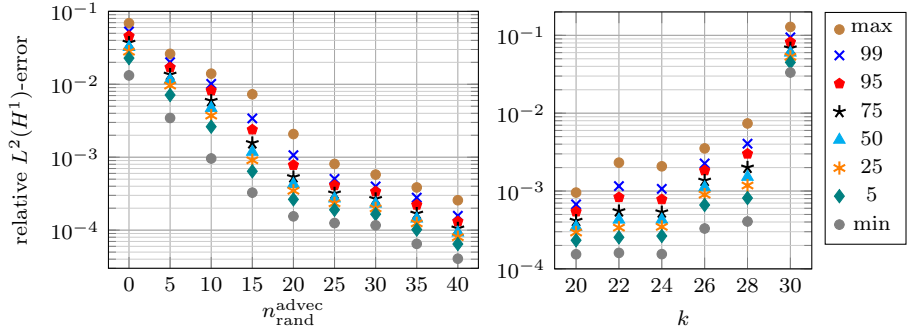

The computational costs of the randomized approach exceed the costs of the POD for this particular test case. However, we highlight that the computation of the local solution trajectories in \cref{randomized_basis_generation} is embarrassingly parallel while for the POD the global solution trajectory has to be computed in a sequential manner. Depending on the employed computer architecture the randomized approach can thus still lead to a significant speed-up compared to the standard POD approach.

\subsection{Problem with a time-dependent permeability coefficient}
\label{subsec_num_ex_4}

In this subsection we consider a numerical experiment including the real-world permeability coefficient $\kappa_0$ taken from the SPE10 benchmark problem \cite{SPE10}, see \cref{figure_coeff_solutions_SPE10}. We refer to this experiment as Example $4$. In \cref{figure_coeff_solutions_SPE10} we observe that the solution trajectory of the problem is quite complex due to different configurations and combinations of permeability and inflow into the domain depicted in \cref{figure_rhs_channels_SPE10}.

In detail, we consider the heat equation (\cref{PDE_ex} with $b\equiv c \equiv 0$), choose $I=(0,10)$ and $D=(0,2.2)\times(0,0.6)$ with Dirichlet and Neumann boundary as shown in \cref{figure_rhs_channels_SPE10} (left) and discretize the spatial domain $D$ with a regular quadrilateral mesh with mesh size $1/100$ in both directions. For the implicit Euler method, we use an equidistant time step size of $1/50$. We impose Neumann boundary conditions $g_N(t,x,y)=g_N(t)$ for $(x,y)\in (0.4,1.8)\times \lb 0.6\rb$ modeling a time-dependent inflow as depicted in \cref{figure_rhs_channels_SPE10} (middle) and $g_N=0$ elsewhere in $I\times\Sigma_N$. The permeability coefficient $\kappa(t,x,y) = \kappa_0(x,y) + \kappa_1(t) \cdot \kappa_1(x,y) + \kappa_2(t) \cdot \kappa_2(x,y)$ is given by a sum of the permeability field $\kappa_0$ from \cite{SPE10} shown in \cref{figure_coeff_solutions_SPE10} (top, left) and high conductivity channels $\kappa_1$ and $\kappa_2$ that are turned on and off in time as depicted in \cref{figure_rhs_channels_SPE10} (middle). Moreover, the initial conditions are given by $u_0(x,y)= 1$ for $(x,y)\in (0.5,0.7)\times (0.3,0.4)$ and $u_0(x,y)= 0$ else. As both the permeability $\kappa$ and the inflow $g_N$ vary in time, we sample from two probability distributions simultaneously to draw time points in \cref{randomized_basis_generation} (cf. the discussion in \cref{subsubsec_multiple_probs}). For this purpose, we employ the rank-$3$ leverage score probability distribution computed from the matrix, whose columns contain the values of $\kappa(t_l,\cdot,\cdot)$ for all spatial elements at time point $t_l$ for $l=0,\ldots,500$. In addition, we use the rank-$1$ leverage score probability distribution computed from the right hand-side matrix $\mathbf{F}$, whose columns are the right hand-side vectors $\mathbf{F}_{l}$ for $l=0,\ldots,500$ \cref{FE_matrices}. Both distributions are depicted in \cref{figure_rhs_channels_SPE10} (right).

\begin{figure}
	\begin{minipage}{13cm}
		{\footnotesize \hspace{1.5cm} $\kappa_0(x,y)$\hspace{2.4cm} solution at $t=0.1$\hspace{2.1cm} solution at $t=2$}
		\vspace{0.03cm}
		\\
		\includegraphics[width=4.25cm, height= 1.5cm]{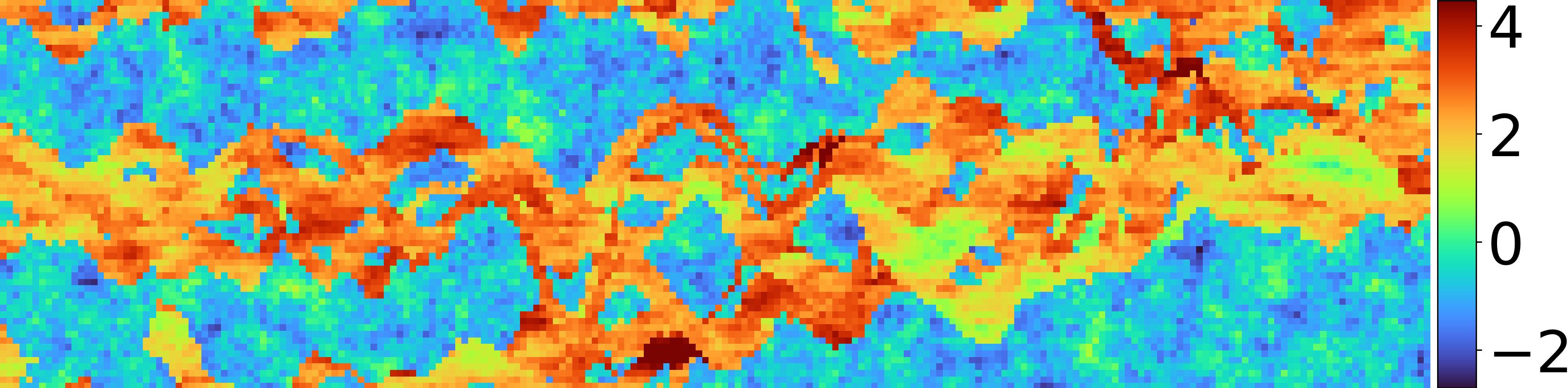}
		\includegraphics[width=4.25cm, height= 1.5cm]{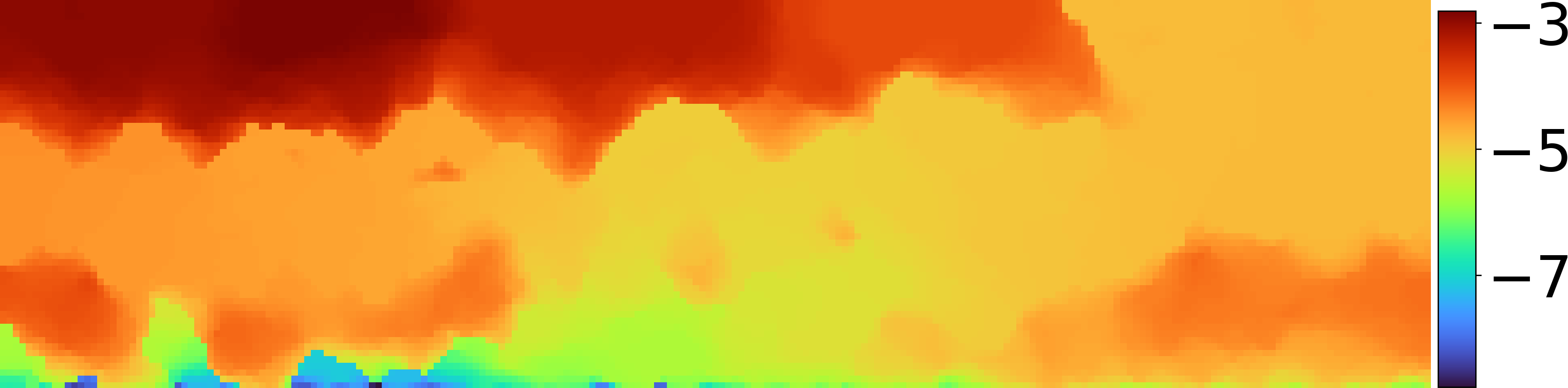}
		\includegraphics[width=4.25cm, height= 1.5cm]{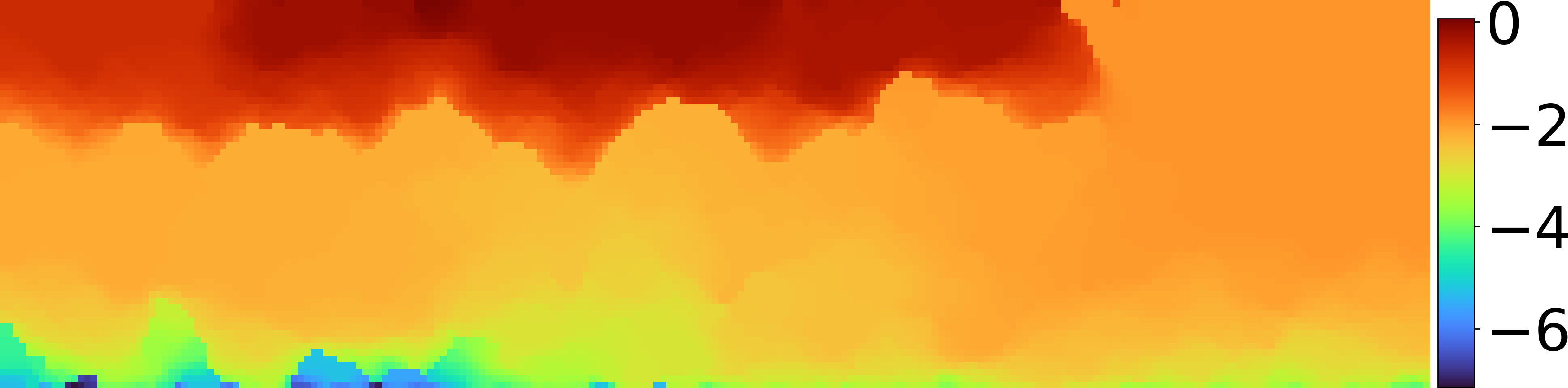}		
	\end{minipage}
	\vspace{0.05cm}\\
	\begin{minipage}{13cm}
		{\footnotesize \hspace{0.75cm} solution at $t=4$\hspace{2cm} solution at $t=7.6$\hspace{2.05cm} solution at $t=9.5$}
		\vspace{0.1cm}
		\\
		\includegraphics[width=4.25cm, height= 1.5cm]{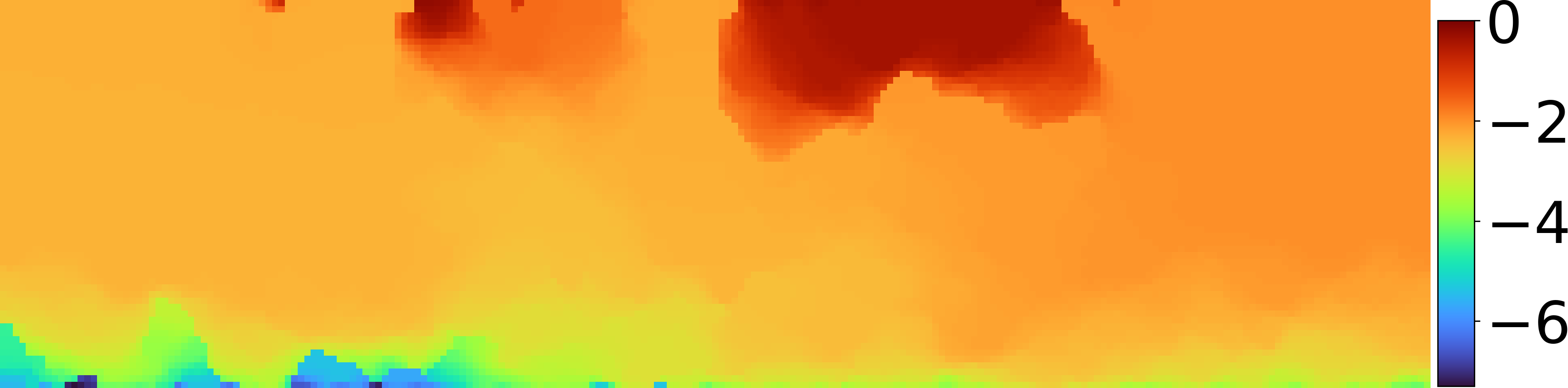}
		\includegraphics[width=4.25cm, height= 1.5cm]{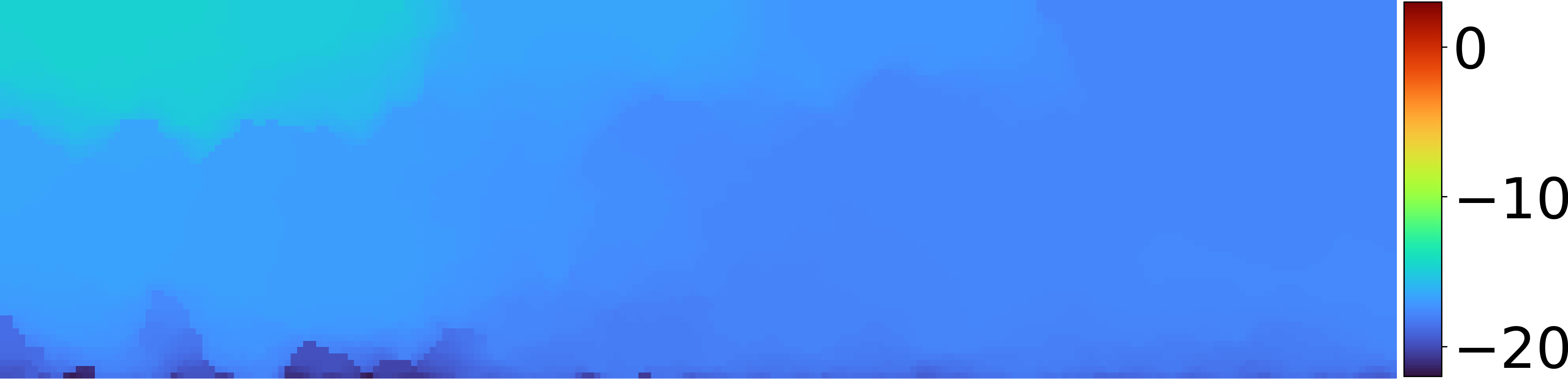}
		\includegraphics[width=4.25cm, height= 1.5cm]{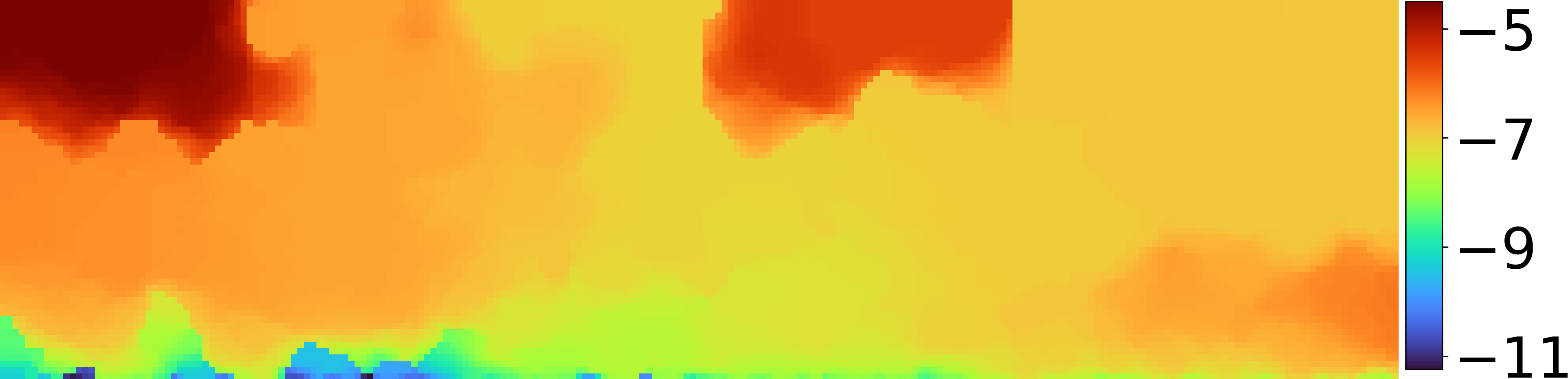}		
	\end{minipage}
	\vspace{2pt}
	\caption{\footnotesize Example 4: Permeability field $\kappa_0$ from \cite{SPE10} and solution evaluated at different points in time, plotted in logarithmic values to the base of $10$.}
	\label{figure_coeff_solutions_SPE10}
\end{figure}

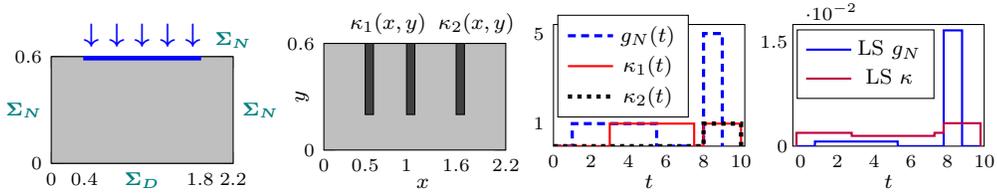
\begin{figure}
	\hspace{-0.25cm}
	\begin{tikzpicture}
	\begin{axis}[
	name = MyAxis,
	width=4cm,
	height=3cm,
	xmin=0,
	xmax= 2.2,
	ymin=0,
	ymax=0.6,
	xtick = {0,0.4,1.1,1.8,2.2},
	xticklabels = {0,0.4,{\textcolor{teal}{$\boldsymbol{\Sigma_D}$}},1.8,2.2},
	ytick={0,0.3,0.6},
	yticklabels={0,{\textcolor{teal}{$\boldsymbol{\Sigma_N}$}},0.6},
	x label style={yshift=4,xshift=3},
	y label style={yshift=-12},
	title style = {font=\footnotesize, yshift = -4},
	label style={font=\footnotesize},
	tick label style={font=\scriptsize}  
	]
	\draw[fill=lightgray] (0,0) rectangle (2.2,0.6);
	\draw[blue, fill=blue, thick] (0.4,0.585) rectangle (1.8,0.61);
	\end{axis}
	\node[above , align=center] at (MyAxis.north) { \textcolor{blue}{$\boldsymbol{\downarrow \,\, \downarrow \,\, \downarrow \,\, \downarrow \,\, \downarrow}$}};
	\node[above, align=center] at (MyAxis.north east) {\scriptsize \textcolor{teal}{$\boldsymbol{\Sigma_N}$}};
	\node[right, align=center] at (MyAxis.east) { \scriptsize  \textcolor{teal}{$\boldsymbol{\Sigma_N}$}};
	\end{tikzpicture}
	\hspace{-0.35cm}
	\begin{tikzpicture}
	\begin{axis}[
	title = {$\quad\;\;\kappa_1(x,y)\;\; \kappa_2(x,y)$},
	width=4cm,
	height=3cm,
	xmin=0,
	xmax= 2.2,
	ymin=0,
	ymax=0.6,
	xlabel = $x$,
	ylabel = $y$,
	xtick = {0,0.5,1,1.6,2.2},
	ytick={0,0.6},
	x label style={yshift=4,xshift=3},
	xtick style = {color=white},
	y label style={yshift=-15},
	title style = {font=\footnotesize, yshift = -6},
	label style={font=\footnotesize},
	tick label style={font=\scriptsize} 
	]
	\draw[fill=lightgray] (0,0) rectangle (2.2,0.6);
	\draw[fill=darkgray] (0.5,0.2) rectangle (0.6,0.6);
	\draw[fill=darkgray] (1,0.2) rectangle (1.1,0.6);
	\draw[fill=darkgray] (1.6,0.2) rectangle (1.7,0.6);
	\end{axis}
	\end{tikzpicture}
	\hspace{-0.275cm}
	\begin{tikzpicture}
	\begin{axis}[
	width=4.125cm,
	height=3.2cm,
	xmin=0,
	xmax= 10.2,
	ymin=0,
	ymax=5.4,
	xtick = {},
	ytick={1,5},
	xlabel = $t$,
	x label style={yshift=4},
	xtick style = {white},
	ytick style = {white},
	title style = {font=\footnotesize, yshift = -4},
	legend style={at={(0.02,1.08)},anchor=north west,font=\footnotesize},
	label style={font=\footnotesize},
	tick label style={font=\scriptsize}  
	]
	\addplot[const plot, no marks,  densely dashed, very thick, blue] coordinates {(0,0) (1,1) (5.5,0) (8,5) (9,0)} node[above,pos=.57]{};
	\addplot[const plot, no marks, thick, red] coordinates {(0,0) (3,1) (7.5,0) (8,1) (10,0)} node[above,pos=.57]{};
	\addplot[const plot, no marks, dotted, ultra thick, black] coordinates {(0,0) (8,0) (8,1) (10,1) (10,0)} node[above,pos=.57]{};
	\legend{$g_N(t)$,$\kappa_1(t)$,$\kappa_2(t)$},
	\end{axis}
	\end{tikzpicture}	
	\hspace{-0.35cm}
	\begin{tikzpicture}
	\begin{axis}[
	name=plot1,
	width = 4.125cm,
	height=3.2cm,
	xmin=-0.2,
	xmax= 10.2,
	ymin=0,
	ymax=0.0175,
	ytick={0,0.015},
	xlabel = $t$,
	x label style={yshift=4},
	xtick style = {white},
	ytick style = {white},
	title style = {font=\footnotesize, yshift = -4},
	legend style={at={(0.02,0.96)},anchor=north west,font=\footnotesize},
	label style={font=\footnotesize},
	tick label style={font=\scriptsize,xshift=1.5}  
	]
	\addplot[const plot, no marks, thick, blue] coordinates {(0,0) (1,0.00066622) (5.5,0) (8,0.01665556) (9,0) (10,0)} node[above,pos=.57]{};
	\addplot[const plot, no marks, thick, purple] coordinates {(0,0) (0,0.00191571) (3,0.00147493) (7.5,0.00191571) (8,0.00330033) (10,0)} node[above,pos=.57]{};
	\legend{LS $g_N$, LS $\kappa$}
	\end{axis}
	\end{tikzpicture}
	\caption{\footnotesize Example 4: Dirichlet and Neumann boundary $\Sigma_D$, $\Sigma_N$ (left), high conductivity chan-\\nels $ \kappa_1(t)\cdot \kappa_1(x,y) + \kappa_2(t)\cdot \kappa_2(x,y)$, and Neumann boundary data $g_N(t,x,y) = g_N(t)$ for $(x,y)\in (0.4,1.8)\times \lb 0.6 \rb$, $g_N(t,x,y) = 0$ else (middle). Dark gray equates to $10^3$, light gray to $0$. Rank-$1$ ´ (LS) associated with $g_N$ and rank-$3$ LS corresponding to $\kappa = \kappa_0 + \kappa_1 + \kappa_2$ (right).}
	\label{figure_rhs_channels_SPE10}
\end{figure}

First, we test how the approximation accuracy depends on the local oversampling size $\nt$. For all tested sizes of $\nt$, and especially for a small oversampling size of $\nt=10$, we observe in \cref{figure_SPE10_quantiles_POD} (left) that in $88\%$ of cases the relative $L^2(I,H^1(D))$-error is below $2\cdot 10^{-2}$ and the algorithm succeeds in detecting all different configurations of the time-dependent data functions and thus all different shapes of the solution. Nevertheless, as already observed for Example 2, we see in \cref{figure_SPE10_quantiles_POD} (middle) that for $\nt=10$ the reduced basis is significantly larger compared to $\nt=15\,(20,25)$ as, for instance, in $95\%$ of cases the reduced dimension is larger than or equal to $59$ for $\nt=10$, while for $\nt=15\, (20, 25)$ it is smaller than or equal to $50\, (41, 36)$ in $95\%$ of cases. The results thus confirm the findings from \cref{subsec_num_ex_1} for this test case and in the following we choose $\nt=15$.

Next, we compare the relative $L^2(t)$-approximation error for one realization of \cref{randomized_basis_generation} with the relative $L^2(t)$-error of the approximation via POD on the solution trajectory of the first $315$ of $500$ time steps. As already motivated in \cref{subsec_num_ex_1}, we focus on equaling the computational budget based on the time stepping ($(\nrand +1)\cdot \nt = (20+1)\cdot 15 = 315$). In \cref{figure_SPE10_quantiles_POD} (right) we observe that the POD yields a much larger error in the time interval $(8,10)$ compared to the randomized approach and is not able to detect the high conductivity channel $\kappa_2$. The randomized approach thus outperforms the POD for this test case even in the sequential setting.

As we observe that the singular values of the transfer operators do not decay as fast as in case of Example 2, see \cref{figure_SPE10_svals_quantiles} (left) (and cf. \cref{figure_stove_transfer_singular_vals} (right) and the discussion in \cref{subsec_num_ex_1}), we investigate how the approximation quality depends on the number of random initial conditions per chosen time point. In \cref{figure_SPE10_svals_quantiles} (right) we see that the accuracy of the approximation improves if we choose two (S2) instead of one (1,$\,$S1) random initial condition per drawn time point. For three or more random initial conditions (S3-S5), the approximation accuracy is at a comparable level with S2. Moreover, we can alternatively improve the approximation quality for this test case by increasing the number of drawn time points $\nrand$. If we employ the same computational budget as for S2, but drawn only one random initial condition for $30$ instead of $20$ drawn time points (S1$*$), we observe that in $97\%$ of cases the relative $L^2(I,H^1(D))$-error is below $2\cdot 10^{-2}$. Using the same computational budget we thus achieve that the algorithm succeeds to detect all different configurations of the time-dependent data functions in $97\%$ instead of $88\%$ of cases.

\begin{figure}
	\hspace{-0.25cm}
	\begin{tikzpicture}
	\begin{semilogyaxis}[
	width=3.5cm,
	height=5cm,
	xmin=-0.5,
	xmax=3.5,
	ymin=1.3e-7,
	ymax=3e-1,
	legend style={at={(1.01,1)},anchor=north west,font=\footnotesize},
	grid=both,
	grid style={line width=.1pt, draw=gray!70},
	major grid style={line width=.2pt,draw=gray!70},
	xtick={0,1,2,3},
	ytick={1e-8,1e-6, 1e-4, 1e-2},
	minor ytick={1e-7, 1e-5, 1e-3, 1e-1},
	xticklabels={10,15,20,25},
	xlabel style = {yshift = 3},
	ylabel style = {yshift = -4},
	xlabel= $\nt$,
	ylabel= relative $L^2(H^1)$-error,
	label style={font=\footnotesize},
	tick label style={font=\footnotesize}  
	]
	\addplot[only marks, brown, mark=*, mark size=1.8pt] table[x expr=\coordindex, y index=0] {data_figures/SPE10_quantiles_rel_L2H1.dat};
	\addplot[only marks, blue, mark=x, mark size=2.5pt,thick] table[x expr=\coordindex, y index=1] {data_figures/SPE10_quantiles_rel_L2H1.dat};
	\addplot[only marks, red, mark=square*, mark size=1.7pt] table[x expr=\coordindex, y index=2] {data_figures/SPE10_quantiles_rel_L2H1.dat};
	\addplot[only marks,violet,mark=asterisk, mark size=2.5pt,thick] table[x expr=\coordindex, y index=3] {data_figures/SPE10_quantiles_rel_L2H1.dat};
	\addplot[only marks, olive, mark=pentagon*, mark size=2.2pt] table[x expr=\coordindex, y index=4] {data_figures/SPE10_quantiles_rel_L2H1.dat};
	\addplot[only marks,black, mark=star,mark size=2.5pt, thick] table[x expr=\coordindex, y index=5] {data_figures/SPE10_quantiles_rel_L2H1.dat};
	\addplot[only marks,cyan,mark=triangle*, mark size=2.5pt] table[x expr=\coordindex, y index=6] {data_figures/SPE10_quantiles_rel_L2H1.dat};
	\addplot[only marks, orange, mark=asterisk, mark size=2.5pt,thick] table[x expr=\coordindex, y index=7] {data_figures/SPE10_quantiles_rel_L2H1.dat};
	\addplot[only marks,teal, mark=diamond*, mark size=2.5pt] table[x expr=\coordindex, y index=8] {data_figures/SPE10_quantiles_rel_L2H1.dat};
	\addplot[only marks, gray, mark= oplus*, mark size=1.8pt] table[x expr=\coordindex, y index=9] {data_figures/SPE10_quantiles_rel_L2H1.dat};
	\legend{max\hspace*{-4pt},97,88,75,60,50,40,25,5,min\hspace*{-4pt}};
	\end{semilogyaxis}
	\end{tikzpicture}
	\hspace{-0.125cm}
	\begin{tikzpicture}
	\begin{axis}[
	width=3.5cm,
	height=5cm,
	xmin=-0.5,
	xmax=3.5,
	ymin=19,
	ymax=67,
	legend style={at={(1.02,1)},anchor=north west,font=\footnotesize},
	grid=both,
	grid style={line width=.1pt, draw=gray!70},
	major grid style={line width=.2pt,draw=gray!70},
	xtick={0,1,2,3},
	ytick={20,30,40,50,60},
	minor ytick={15,25,35,45,55,65},
	xticklabels = {10,15,20,25},
	xlabel style = {yshift = 3},
	ylabel style = {yshift = -2},
	xlabel= $\nt$,
	ylabel= reduced dimension,
	label style={font=\footnotesize},
	tick label style={font=\footnotesize}  
	]
	\addplot[only marks, brown, mark=*, mark size=1.8pt] table[x expr=\coordindex, y index=0] {data_figures/SPE10_quantiles_red_sizes.dat};
	\addplot[only marks,magenta,mark=square*, mark size=1.7pt] table[x expr=\coordindex, y index=1] {data_figures/SPE10_quantiles_red_sizes.dat};
	\addplot[only marks,black, mark=star,mark size=2.5pt, thick] table[x expr=\coordindex, y index=2] {data_figures/SPE10_quantiles_red_sizes.dat};
	\addplot[only marks,cyan,mark=triangle*, mark size=2.5pt] table[x expr=\coordindex, y index=3] {data_figures/SPE10_quantiles_red_sizes.dat};
	\addplot[only marks, orange, mark=asterisk, mark size=2.5pt,thick] table[x expr=\coordindex, y index=4] {data_figures/SPE10_quantiles_red_sizes.dat};
	\addplot[only marks,teal, mark=diamond*, mark size=2.5pt] table[x expr=\coordindex, y index=5] {data_figures/SPE10_quantiles_red_sizes.dat};
	\addplot[only marks, gray, mark= oplus*, mark size=1.8pt] table[x expr=\coordindex, y index=6] {data_figures/SPE10_quantiles_red_sizes.dat};
	\legend{max\hspace*{-4pt},95,75,50,25,5,min\hspace*{-4pt}};
	\end{axis}
	\end{tikzpicture}
	\hspace{-0.125cm}
	\begin{tikzpicture}
	\begin{semilogyaxis}[
	width=5cm,
	height=5cm,
	xmin=0,
	xmax=10,
	ymin=1e-12,
	ymax=2.5e0,
	legend style={at={(0.02,0.5)},anchor=north west,font=\footnotesize},
	grid=both,
	grid style={line width=.1pt, draw=gray!70},
	major grid style={line width=.2pt,draw=gray!70},
	xtick={0,2,4,6,8,10},
	ytick={1e-16,1e-14,1e-12,1e-10,1e-8,1e-6, 1e-4, 1e-2,1e-0},
	minor ytick={1e-11,1e-9,1e-7, 1e-5, 1e-3, 1e-1},
	xlabel= $t$,
	ylabel= relative $L^2(t)$-error,
	ylabel style = {yshift = -4},
	label style={font=\footnotesize},
	xlabel style = {yshift=4},
	tick label style={font=\footnotesize}  
	]
	\addplot[densely dashdotted,purple,  very thick] table[x index = 0, y index = 1]{data_figures/SPE10_compare_POD_random.dat};
	\addplot[solid ,darkgray, thick] table[x index = 0, y index = 2]{data_figures/SPE10_compare_POD_random.dat};
	\legend{POD, random}
	\end{semilogyaxis}
	\end{tikzpicture}
	\caption{\footnotesize Example 4: Quantiles of relative $L^2(I,H^1(D))$-error (left) and reduced dimension (middle) for varying numbers of $\nt$, $k = \nt-2$, $\nrand=20\,(10+10)$, $\tol = 10^{-8}$, and $25.000$ realizations. Relative $L^2(t)$-error for one realization of \cref{randomized_basis_generation} for $\nt=15$ (random) vs. POD on solution trajectory of first $315$ of $500$ time steps with tolerance $10^{-8}$ (POD) (right).
	}
	\label{figure_SPE10_quantiles_POD}
\end{figure}
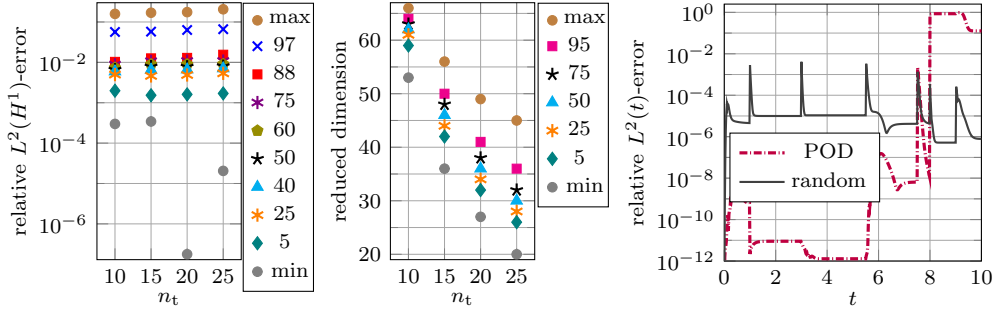

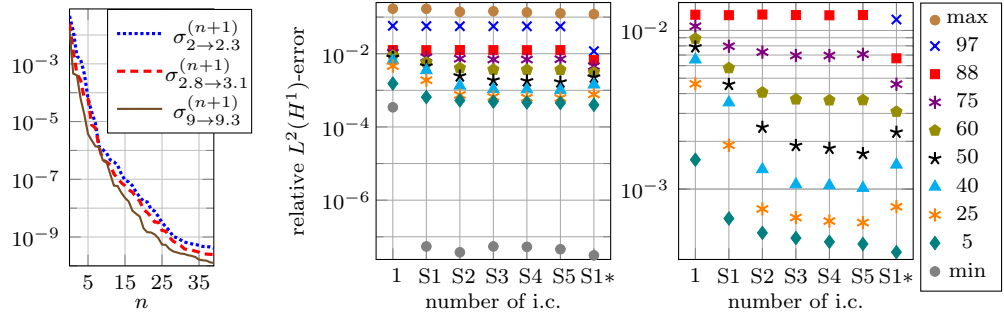
\begin{figure}
	\hspace*{-0.2cm}
	\begin{tikzpicture}
	\begin{semilogyaxis}[
	width=3.5cm,
	height=5cm,
	xmin=0,
	xmax=39,
	ymin=1e-10,
	ymax=8e-2,
	legend style={at={(0.26,1)},anchor=north west,font=\footnotesize},
	grid=both,
	grid style={line width=.1pt, draw=gray!50},
	major grid style={line width=.2pt,draw=gray!50},
	xtick={5,15,25,35},
	ytick={1e-9,1e-7, 1e-5,1e-3},
	minor ytick={1e-10,1e-8, 1e-6, 1e-4,1e-2,1e-1},
	xlabel= $n$,
	label style={font=\footnotesize},
	xlabel style = {yshift = 3},
	tick label style={font=\footnotesize}  
	]
	\addplot+[very thick,densely dotted, mark=none] table[x expr=\coordindex,y index=0]{data_figures/SPE10_transfer_singular_vals_nt=15.dat};
	\addplot+[very thick,densely dashed, mark=none] table[x expr=\coordindex,y index=1]{data_figures/SPE10_transfer_singular_vals_nt=15.dat};
	\addplot+[thick, mark=none] table[x expr=\coordindex,y index=2]{data_figures/SPE10_transfer_singular_vals_nt=15.dat};
	\legend{\hspace*{-0.05cm}$\sigma^{(n+1)}_{2\rightarrow 2.3}$\hspace*{-0.15cm}, \hspace*{-0.05cm}$\sigma^{(n+1)}_{2.8\rightarrow 3.1}$\hspace*{-0.15cm},\hspace*{-0.05cm}$\sigma^{(n+1)}_{9\rightarrow 9.3}$\hspace*{-0.15cm}}
	\end{semilogyaxis}
	\end{tikzpicture}
	\hspace*{0.1cm}
	\begin{tikzpicture}
	\begin{semilogyaxis}[
	width=4.7cm,
	height=5cm,
	xmin=-0.5,
	xmax=6.5,
	ymin=2.4e-8,
	ymax=2.5e-1,
	legend style={at={(1.02,1)},anchor=north west,font=\footnotesize},
	grid=both,
	grid style={line width=.1pt, draw=gray!70},
	major grid style={line width=.2pt,draw=gray!70},
	xlabel= number of i.c.,
	label style={font=\footnotesize},
	xlabel style = {yshift = 3},
	xtick={0,1,2,3,4,5,6},
	ytick={1e-8,1e-6, 1e-4, 1e-2},
	minor ytick={1e-7, 1e-5, 1e-3, 1e-1},
	xticklabels={1,S1,S2,S3,S4,S5,$\text{S1}*$},
	ylabel= relative $L^2(H^1)$-error,
	ylabel style={yshift=-2},
	tick label style={font=\footnotesize,xshift=1.5}  
	]
	\addplot[only marks, brown, mark=*, mark size=1.8pt] table[x expr=\coordindex, y index=0] {data_figures/SPE10_quantiles_rel_L2H1_S.dat};
	\addplot[only marks, blue, mark=x, mark size=2.5pt,thick] table[x expr=\coordindex, y index=1] {data_figures/SPE10_quantiles_rel_L2H1_S.dat};
	\addplot[only marks, red, mark=square*, mark size=1.7pt] table[x expr=\coordindex, y index=2] {data_figures/SPE10_quantiles_rel_L2H1_S.dat};
	\addplot[only marks,violet,mark=asterisk, mark size=2.5pt,thick] table[x expr=\coordindex, y index=3] {data_figures/SPE10_quantiles_rel_L2H1_S.dat};
	\addplot[only marks, olive, mark=pentagon*, mark size=2.2pt] table[x expr=\coordindex, y index=4] {data_figures/SPE10_quantiles_rel_L2H1_S.dat};
	\addplot[only marks,black, mark=star,mark size=2.5pt, thick] table[x expr=\coordindex, y index=5] {data_figures/SPE10_quantiles_rel_L2H1_S.dat};
	\addplot[only marks,cyan,mark=triangle*, mark size=2.5pt] table[x expr=\coordindex, y index=6] {data_figures/SPE10_quantiles_rel_L2H1_S.dat};
	\addplot[only marks, orange, mark=asterisk, mark size=2.5pt,thick] table[x expr=\coordindex, y index=7] {data_figures/SPE10_quantiles_rel_L2H1_S.dat};
	\addplot[only marks,teal, mark=diamond*, mark size=2.5pt] table[x expr=\coordindex, y index=8] {data_figures/SPE10_quantiles_rel_L2H1_S.dat};
	\addplot[only marks, gray, mark= oplus*, mark size=1.8pt] table[x expr=\coordindex, y index=9] {data_figures/SPE10_quantiles_rel_L2H1_S.dat};
	\end{semilogyaxis}
	\end{tikzpicture}
	\hspace{-0.4cm}
	\begin{tikzpicture}
	\begin{semilogyaxis}[
	width=4.7cm,
	height=5cm,
	xmin=-0.5,
	xmax=6.5,
	ymin=3.6e-4,
	ymax=1.5e-2,
	legend style={at={(1.03,1)},anchor=north west,font=\footnotesize},
	grid=both,
	grid style={line width=.1pt, draw=gray!70},
	major grid style={line width=.2pt,draw=gray!70},
	xtick={0,1,2,3,4,5,6},
	ytick={1e-3,1e-2,1e-1},
	xlabel= number of i.c.,
	label style={font=\footnotesize},
	xlabel style = {yshift = 3},
	xticklabels={1,S1,S2,S3,S4,S5,$\text{S1}*$},
	ylabel style={ yshift=-2},
	tick label style={font=\footnotesize}  
	]
	\addplot[only marks, brown, mark=*, mark size=1.8pt] table[x expr=\coordindex, y index=0] {data_figures/SPE10_quantiles_rel_L2H1_S.dat};
	\addplot[only marks, blue, mark=x, mark size=2.5pt,thick] table[x expr=\coordindex, y index=1] {data_figures/SPE10_quantiles_rel_L2H1_S.dat};
	\addplot[only marks, red, mark=square*, mark size=1.7pt] table[x expr=\coordindex, y index=2] {data_figures/SPE10_quantiles_rel_L2H1_S.dat};
	\addplot[only marks,violet,mark=asterisk, mark size=2.5pt,thick] table[x expr=\coordindex, y index=3] {data_figures/SPE10_quantiles_rel_L2H1_S.dat};
	\addplot[only marks, olive, mark=pentagon*, mark size=2.2pt] table[x expr=\coordindex, y index=4] {data_figures/SPE10_quantiles_rel_L2H1_S.dat};
	\addplot[only marks,black, mark=star,mark size=2.5pt, thick] table[x expr=\coordindex, y index=5] {data_figures/SPE10_quantiles_rel_L2H1_S.dat};
	\addplot[only marks,cyan,mark=triangle*, mark size=2.5pt] table[x expr=\coordindex, y index=6] {data_figures/SPE10_quantiles_rel_L2H1_S.dat};
	\addplot[only marks, orange, mark=asterisk, mark size=2.5pt,thick] table[x expr=\coordindex, y index=7] {data_figures/SPE10_quantiles_rel_L2H1_S.dat};
	\addplot[only marks,teal, mark=diamond*, mark size=2.5pt] table[x expr=\coordindex, y index=8] {data_figures/SPE10_quantiles_rel_L2H1_S.dat};
	\addplot[only marks, gray, mark= oplus*, mark size=1.8pt] table[x expr=\coordindex, y index=9] {data_figures/SPE10_quantiles_rel_L2H1_S.dat};
	\legend{max,97,88,75,60,50,40,25,5,min};
	\end{semilogyaxis}
	\end{tikzpicture}
	\caption{\footnotesize Example 4: Singular values of transfer operators (left). Quantiles of relative $L^2(I,H^1(D))$-error for $\nt=15$, $k=13$, $\nrand=20\,(10+10)$, $\tol=10^{-8}$, $25.000$ realizations, and $1-5$ random initial conditions (i.c.) per time point (right). S indicates that local computations are performed separately for right-hand side and initial conditions, $*$ indicates that $\nrand=30\,(15+15)$.
	}
	\label{figure_SPE10_svals_quantiles}
\end{figure}

\subsection{Choice of parameters in \cref{randomized_basis_generation}}
\label{guidance_parameters}
Based on the experiments above we give the following guidance on how to choose the parameters $\nt$, $k$, and $\nrand$: An appropriate choice for the local oversampling size $\nt$ seems to be $10-15$, while for $\nt=10$ the resulting reduced basis is larger compared to $\nt=15$ and the user can balance computational costs depending on the respective application. In case of coefficients implying a slow spreading or propagation of the solution, a larger $\nt$, for instance, $\nt=30$, might be more favorable. In addition, we propose $k=\nt-2$ as a reasonable choice for diffusion problems, while for advection-dominated problems a smaller $k$, such as $k=\nt-6$, is more favorable. The choice of drawn time points $\nrand$ can be based on the number of available parallel compute units. Moreover, in certain cases it is necessary to also sample from data functions that are constant in time (e.g. a constant advection field) to achieve a good approximation accuracy.

\section{Conclusions}
\label{conclusions}

To tackle time-dependent problems with heterogeneous time-dependent coefficients, we have proposed a randomized algorithm that constructs a reduced approximation space in time by solving several local problems in time in parallel. Based on techniques from randomized NLA \cite{DerMah21,DriMah16,HaMaTr11} points in time are drawn from a data-driven probability distribution and the PDE is solved locally in time using these points as end points with random initial conditions. The approach allows for local error control \cite{BuhSme18} and the computation of the local basis functions is embarrassingly parallel.

The numerical experiments demonstrate that the proposed algorithm can outperform the POD even in the sequential setting for complex problems with heterogeneous time-dependent data functions and is also well capable of tackling higher values of advection. Moreover, we have observed that leverage scores are capable of detecting multiscale features in the data functions.

\appendix

\section{Compactness of the transfer operator in time for the advection-diffusion-reaction problem} \label{supp_section_ex}
	In this section we prove a Caccioppoli inequality and compactness of the transfer operator in time (cf. \cref{transfer_operator}) for the advection-diffusion-reaction problem \cref{PDE_ex} introduced in \cref{sec_problem_setting}. The Caccioppoli inequality in \cref{prop:Caccioppoli} is closely linked to the exponential decay behavior of solutions of the PDE in time (cf. the discussion in \cref{subsection_motivation}) and allows to bound the $L^2(D)$-norm of solutions evaluated at a point of time in terms of their $L^2(I,L^2(D))$-norm. The second key ingredient that we use to prove compactness of the transfer operator in time in \cref{compact_transfer_op} is the compactness theorem of Aubin-Lions \cite[Corollary 5]{Sim86}, which states that the embedding $\left\lbrace v \in L^2(I,H^1_{0}( D)) \mid v_t \in L^2(I,H^{-1}( D))\right\rbrace \hookrightarrow L^{2}(I,L^2(D)) $ is compact. The combination of a Caccioppoli-type inequality with a suitable compactness theorem is usually used to show compactness of the transfer operator; see also \cite{BabLip11,MaSch21,SchSme20,SmePat16,TadPat18}. 
	
	We may consider the following weak formulation (for a proof of well-posedness see, e.g., \cite{Wloka82,SchSte09}): Find the solution $u \in  W^{1,2,2}(I,H^1_{0}( D),H^{-1}( D))\mspace{-2mu}:=\mspace{-2mu} \lbrace v \in L^2(I,H^1_{0}( D))\mspace{-2mu}\mid $ $v_t \in L^2(I,H^{-1}( D)) \rbrace$ such that $u(0)=u_0$ in $L^2( D)$ and
	\begin{align*}
	\begin{split}
	&\int_{I} \la u_t(t),\psi(t)\ra_{H^1_0( D)} dt + \int_{I} (\kappa(t) \nabla u (t), \nabla \psi(t))_{L^2( D)}dt + \int_{I} (b(t) \cdot \nabla u (t), \psi(t))_{L^2( D)}dt\\
	&  + \int_{I} (c(t) u(t),\psi(t))_{L^2(D)} dt\; = \int_{I} \la f(t), \psi(t)\ra_{H^1_0( D)} dt \qquad \forall\,\psi \in L^2(I,H^1_0(D)).
	\end{split}
	\end{align*}
	
	The data functions are introduced in \cref{sec_problem_setting}. In particular, the source terms and initial conditions are given by $f \in L^{2}(I,H^{-1}( D))$ and $u_0 \in L^{2}( D)$. To simplify notations, we assume homogeneous Dirichlet boundary conditions on $I\times\partial D$. However, the theory analogously applies to non-homogeneous Dirichlet boundary conditions.
	
	We highlight that the embedding $W^{1,2,2}(I,H^1_{0}( D),H^{-1}( D)) \hookrightarrow C^{0}(\bar{I},L^{2}(D))$ is not compact unless additional regularity is assumed. In fact, the space 
	\begin{align*}
	W^{1,\infty,r}(I,H^1_{0}( D),H^{-1}( D)):= \left\lbrace v \in L^\infty(I,H^1_{0}( D)) \mid v_t \in L^r(I,H^{-1}( D)) \right\rbrace
	\end{align*}
	embeds compactly in $ C^{0}(\bar{I},L^{2}(D))$ for $r>1$ \cite[Corollary 5]{Sim86}. In contrast, we prove compactness in \cref{supp_section_ex} requiring significantly less regularity. First, the compactness theorem of Aubin-Lions \cite[Corollary 5]{Sim86} states that the embedding $W^{1,2,2}(I,H^1_{0}( D),$ $H^{-1}( D)) \hookrightarrow L^{2}(I,L^2(D))$ is compact. Next, the Caccioppoli inequality that we prove in \cref{prop:Caccioppoli} is the key ingredient that facilitates the restriction to a point of time as the inequality bounds the $L^2(D)$-norm of (local) solutions evaluated at a point of time in terms of their $L^2(I,L^2(D))$-norm. Consequently, we show in \cref{compact_transfer_op} that the space of (local) solutions contained in the generalized Sobolev space $W^{1,2,2}(I,H^1_{0}( D),H^{-1}( D))$ embeds compactly in $C^{0}(\bar{I},L^{2}(D))$.

	In the following, we denote by $t^* \in I$ the local end time point to simplify notations in the proofs. We then consider the local time interval $(s,t^*) \subseteq I$ and seek local solutions 
	$u_\text{loc} \in W^{1,2,2}((s,t^*),H^1_{0}( D),H^{-1}( D))$ with initial conditions $u_\text{loc}(s,\cdot)\in L^2(D)$ such that for all $\psi \in L^2((s,t^*),H^1_0(D))$
	\begin{align}\label{eq:weak_local_advec}
	&\int_{s}^{t^*} \la (u_\text{loc})_t(t),\psi(t)\ra_{H^1_0( D)} dt + \int_{s}^{t^*} (\kappa(t) \nabla u_\text{loc} (t), \nabla \psi(t))_{L^2( D)}dt \\
	+ \mspace{-5mu}\int_{s}^{t^*}\mspace{-12mu} (b(t) \cdot& \nabla u_\text{loc} (t), \psi(t))_{L^2( D)}dt  +\mspace{-7mu} \int_{s}^{t^*} \mspace{-12mu}(c(t) u_\text{loc}(t),\psi(t))_{L^2(D)} dt =\mspace{-7mu} \int_{s}^{t^*} \mspace{-12mu}\la f(t), \psi(t)\ra_{H^1_0( D)}dt.\nonumber
	\end{align}

	\begin{proposition}[Caccioppoli inequality in time]\label{prop:Caccioppoli}
		Let $w$ satisfy \cref{eq:weak_local_advec} with $f \equiv 0$ and arbitrary initial conditions $w(s,\cdot)\in L^2(D)$. Then, we have that
		\begin{equation}\label{eq:Caccioppoli_advec}
		\|w(t^*,\cdot)\|_{L^{2}(D)}^{2}  \leq \frac{2}{(t^*-s)} \|w\|_{L^{2}((s,t^*),L^{2}(D))}^{2}.
		\end{equation}
	\end{proposition}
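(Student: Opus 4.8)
The plan is to run a weighted energy estimate in the spirit of the spatial Caccioppoli inequalities in \cite{BabLip11,SchSme20,SmePat16}, but with a cutoff in time instead of in space. Introduce the weight $\eta(t):=(t^*-s)^{-1}(t-s)$, so that $\eta\in C^1([s,t^*])$, $\eta(s)=0$, $\eta(t^*)=1$, $0\le\eta\le 1$, and $\eta'(t)=(t^*-s)^{-1}$. Since $w\in L^2((s,t^*),H^1_0(D))$ and $\eta$ is a bounded function of $t$ alone, $\eta^2 w\in L^2((s,t^*),H^1_0(D))$ is an admissible test function in \cref{eq:weak_local_advec}; the idea is to plug it in (with $f\equiv 0$) and treat the four resulting terms separately.

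For the time-derivative term I would first recall the standard fact (see, e.g., \cite{Wloka82,SchSte09}) that for $w\in W^{1,2,2}((s,t^*),H^1_0(D),H^{-1}(D))$ the map $t\mapsto\|w(t,\cdot)\|_{L^2(D)}^2$ is absolutely continuous with $\tfrac{d}{dt}\|w(t,\cdot)\|_{L^2(D)}^2=2\la w_t(t),w(t)\ra_{H^1_0(D)}$. Hence, since $\eta$ depends only on $t$ and $\eta(s)=0$, an integration by parts in time yields
\begin{align*}
\int_s^{t^*}\la w_t(t),\eta^2(t)w(t)\ra_{H^1_0(D)}\,dt
&=\int_s^{t^*}\frac{\eta^2(t)}{2}\frac{d}{dt}\|w(t,\cdot)\|_{L^2(D)}^2\,dt\\
&=\frac12\|w(t^*,\cdot)\|_{L^2(D)}^2-\int_s^{t^*}\eta(t)\eta'(t)\|w(t,\cdot)\|_{L^2(D)}^2\,dt.
\end{align*}

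The remaining three terms are nonnegative: as $\nabla(\eta^2 w)=\eta^2\nabla w$, the diffusion term equals $\int_s^{t^*}\eta^2(t)(\kappa(t)\nabla w(t),\nabla w(t))_{L^2(D)}\,dt\ge 0$ by the ellipticity bound $v^\top\kappa v\ge\kappa_0|v|^2$; for the advection term I would integrate by parts in space, using $w(t,\cdot)\in H^1_0(D)$ (so the boundary term vanishes) and $\nabla\cdot b\in L^\infty(I\times D)$, to obtain $(b(t)\cdot\nabla w(t),w(t))_{L^2(D)}=-\tfrac12((\nabla\cdot b(t))w(t),w(t))_{L^2(D)}$, and then combine this with the reaction term so that the sum becomes $\int_s^{t^*}\eta^2(t)\int_D\bigl(c(t,x)-\tfrac12\nabla\cdot b(t,x)\bigr)w(t,x)^2\,dx\,dt\ge 0$, by the well-posedness assumption $c-\tfrac12\nabla\cdot b\ge 0$. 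Dropping these nonnegative contributions in \cref{eq:weak_local_advec} then leaves
\begin{align*}
\frac12\|w(t^*,\cdot)\|_{L^2(D)}^2
&\le\int_s^{t^*}\eta(t)\eta'(t)\|w(t,\cdot)\|_{L^2(D)}^2\,dt
=\frac{1}{(t^*-s)^2}\int_s^{t^*}(t-s)\|w(t,\cdot)\|_{L^2(D)}^2\,dt\\
&\le\frac{1}{t^*-s}\|w\|_{L^2((s,t^*),L^2(D))}^2,
\end{align*}
using $t-s\le t^*-s$ on $(s,t^*)$; multiplying by $2$ gives \cref{eq:Caccioppoli_advec}. I do not expect a serious obstacle: the only points requiring care are the rigorous justification of the integration-by-parts-in-time identity in the generalized Sobolev space $W^{1,2,2}(I,H^1_0(D),H^{-1}(D))$ (which I would cite rather than reprove) and the vanishing of the boundary term in the spatial integration by parts, which is precisely where the homogeneous Dirichlet condition enters.
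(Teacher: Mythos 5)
Your proposal is correct and follows essentially the same route as the paper: test with $\eta^2 w$ for a temporal cutoff $\eta$ vanishing at $s$ with $|\eta'|\le (t^*-s)^{-1}$, drop the nonnegative diffusion term and the combined advection--reaction term (nonnegative by Gauss's theorem, the homogeneous Dirichlet condition, and $c-\tfrac12\nabla\cdot b\ge 0$), and absorb the remaining $\int\eta\eta'\|w\|_{L^2(D)}^2$ term. The only difference is presentational: you justify the time-derivative manipulation by citing the standard absolute-continuity/product-rule fact for $\|w(t,\cdot)\|_{L^2(D)}^2$ in $W^{1,2,2}$, whereas the paper carries out an explicit approximation of $w$ by smooth functions $w_n$ to reach the same identity.
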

	
	\begin{proof}
		The first paragraph closely follows the proof of Proposition 3.1 in \cite{SchSme20}.
		Since $w$ satisfies \cref{eq:weak_local_advec}, we can choose $\psi = v \varphi$ for arbitrary $v \in H_0^1(D)$ and $\varphi \in C_0^\infty((s,t^*))$ as a test function in \cref{eq:weak_local_advec}. As $\varphi \in C_0^\infty((s,t^*))$ is chosen arbitrarily, the fundamental lemma of calculus of variations yields that
		$\langle w_t(t),v\ra_{H^1_0(D)}+(\kappa(t) \nabla w(t), \nabla v )_{L^2(D)} + (b(t) \cdot \nabla w (t), v)_{L^2( D)} + (c(t) w(t),v)_{L^2(D)} = 0$ for all $v\in H^1_0(D)$ and almost every $t \in (s,t^*)$. Next, we introduce a cut-off function $\eta \in C^{1}((s,t^*))$ that satisfies $0 \leq \eta \leq 1$, $\eta(s)=0$, $\eta(t^*)=1$, and $|\eta_{t} | \leq \frac{1}{(t^*-s)}$. In the following, we want to use $w\eta^2$ as a test function. To enable rearranging the part of the weak formulation that includes the time derivative, we approximate $w$ by a sequence $w_{n} \in C_0^\infty((s,t^*),H^1_0(D))$ such that $w_{n}$ converges strongly to $w$ in $L^2((s,t^*),H^1_0(D))$. Then, for almost every $t \in (s,t^*)$ and each $n\in \N$ we have that
		\begin{align*}
		&\la w_t(t),w_n(t)\eta^{2}(t) \ra_{H^1_0(D)} + ( \kappa(t) \nabla w(t), \nabla w_n(t)\eta^{2}(t))_{L^2(D)}\\
		& +  (b(t) \cdot \nabla w (t),w_n(t)\eta^{2}(t))_{L^2( D)} + (c(t) w(t),w_n(t)\eta^{2}(t))_{L^2(D)} = 0.
		\end{align*}
		Integrating over the time interval $(s,t^*)$ yields 
		\begin{align}\label{eq:aux_advec}
		\begin{split}
		&\int_s^{t^*} \mspace{-4mu}\la w_t(t),w_n(t)\eta^{2}(t) \ra_{H^1_0(D)} dt +\mspace{-2mu} \int_s^{t^*}\mspace{-4mu} ( \kappa(t) \nabla w(t), \nabla w_n(t)\eta^{2}(t))_{L^2(D)} dt\\
		& + \mspace{-2mu}  \int_s^{t^*} \mspace{-4mu}(b(t) \cdot \nabla w (t),w_n(t)\eta^{2}(t))_{L^2( D)} dt +\mspace{-2mu} \int_s^{t^*} \mspace{-4mu} (c(t) w(t),w_n(t)\eta^{2}(t))_{L^2(D)} dt = 0.
		\end{split}
		\end{align}
		Using integration by parts and $\eta(s)=0$, we rewrite the first term in \cref{eq:aux_advec} as follows:
		\begin{align*}
		&\int_s^{t^*} \la w_t(t),w_n(t)\eta^{2}(t)\ra_{H^1_0(D)} \, dt\\ 
		=& - \int_{s}^{t^*} (w(t), (w_{n}(t))_{t} \eta^{2}(t))_{L^{2}(D)}\,dt - \int_{s}^{t^*} (w(t),w_{n}(t) 2\eta(t) \eta_{t}(t))_{L^{2}(D)} \, dt \\
		& + (w(t^*),w_n(t^*) \eta^2(t^*))_{L^2(D)}\\
		=& - \int_{s}^{t^*} (w(t) \eta(t), (w_{n}(t))_{t} \eta(t))_{L^{2}(D)}\,dt - 2 \int_{s}^{t^*} (w(t) \eta(t) ,w_{n}(t) \eta_{t}(t))_{L^{2}(D)} \, dt \\
		& +  (w(t^*)\eta(t^*),w_n(t^*) \eta(t^*))_{L^2(D)}\\
		=&  \int_{s}^{t^*} \la (w(t)\eta(t))_{t}, w_{n}(t)\eta(t) \ra_{H^1_0(D)} \, dt - \int_{s}^{t^*} (w(t)\eta(t) ,w_{n}(t) \eta_{t}(t))_{L^{2}(D)} \, dt.
		\end{align*}
		Letting $n$ go to $\infty$ thus yields
		\begin{align}\label{eq:aux_advec_2}
		\begin{split}
		&\int_s^{t^*} \la (w(t)\eta(t))_{t},w(t)\eta(t)\ra  dt - \int_{s}^{t^*} (w(t)\eta(t) ,w(t) \eta_{t}(t))_{L^{2}(D)} dt \\ 
		& + \int_s^{t^*} ( \kappa(t) \nabla w(t) , \nabla w(t) \eta^{2}(t))_{L^2(D)} dt +  \int_s^{t^*} (b(t) \cdot \nabla w (t),w(t)\eta^{2}(t))_{L^2( D)} dt\mspace{-10mu}\\
		& + \int_s^{t^*}  (c(t) w(t),w(t)\eta^{2}(t))_{L^2(D)} dt   = 0.
		\end{split}
		\end{align}
		Employing the assumption $c -\frac{1}{2}\nabla \cdot b \geq 0$ and Gauss's theorem we then conclude that 
		\begin{align*}
		&\int_s^{t^*} (b(t) \cdot \nabla w (t),w(t)\eta^{2}(t))_{L^2( D)} dt  + \int_s^{t^*}  (c(t) w(t),w(t)\eta^{2}(t))_{L^2(D)} dt\\
		= &\; \frac{1}{2} \int_{s}^{t^*} \int_D \nabla \cdot (b (w \eta)^2)\, dx\, dt + \int_{s}^{t^*} \int_D (c - \frac{1}{2} \nabla \cdot b) (w\eta)^2\, dx \,dt\\
		\geq &\; \frac{1}{2} \int_{s}^{t^*} \int_{\partial D} b (w \eta)^2 n\, dx\, dt\; = \; 0.
		\end{align*}
		Finally, we exploit that $\int_s^{t^*} ( \kappa(t) \nabla w(t) , \nabla w(t) \eta^{2}(t))_{L^2(D)} dt \geq 0$, the properties of the cut-off function $\eta$ and the identity $\int_s^{t^*} \la (w(t)\eta(t))_{t},w(t)\eta(t)\ra \, dt = \frac{1}{2} \Vert w(t^*)\eta(t^*) \Vert_{L^2(D)}^2$ to infer that
		\begin{align*}
		\|w( t^*,\cdot)\|_{L^{2}(D)}^{2} \leq \frac{2}{(t^*-s)} \|w\|_{L^{2}((s,t^*),L^{2}(D))}^{2}.
		\end{align*}
	\end{proof}

	\begin{proposition}\label{compact_transfer_op}
		The transfer operator $\mathcal{T}_{s\rightarrow t^*}$ in \cref{transfer_operator} is compact.
	\end{proposition}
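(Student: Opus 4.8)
The plan is to combine the Caccioppoli inequality from \cref{prop:Caccioppoli} with the Aubin--Lions compactness theorem \cite[Corollary 5]{Sim86}, mimicking the strategy that has been used in the elliptic and parabolic settings \cite{BabLip11,SchSme20,SmePat16}. The guiding idea is that \cref{prop:Caccioppoli} is exactly the tool that converts space--time $L^2$-control of a local solution into control of its trace at the single time $t^*$, which is the obstruction to compactness that would otherwise be present (recall the embedding into $C^0(\bar I, L^2(D))$ is not compact in general).

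First I would fix an arbitrary bounded sequence $(g_k)_{k\in\N}\subset L^2(D)$, say $\|g_k\|_{L^2(D)}\le M$, and let $w_k\in W^{1,2,2}((s,t^*),H^1_0(D),H^{-1}(D))$ be the unique solution of \cref{eq:weak_local_advec} with $f\equiv 0$ and $w_k(s,\cdot)=g_k$, so that by definition $\mathcal{T}_{s\rightarrow t^*}g_k = w_k(t^*,\cdot)$. The next step is a standard parabolic energy estimate: testing \cref{eq:weak_local_advec} with $\psi=w_k$, using $\kappa\ge\kappa_0$, the assumption $c-\tfrac12\nabla\cdot b\ge 0$ together with Gauss's theorem and homogeneous Dirichlet data (exactly as in the proof of \cref{prop:Caccioppoli}) to show that the advection and reaction terms contribute a nonnegative amount, and the Poincar\'e inequality, yields a G\aa rding-type coercivity and hence a bound $\|w_k\|_{L^2((s,t^*),H^1_0(D))}+\|(w_k)_t\|_{L^2((s,t^*),H^{-1}(D))}\le C\|g_k\|_{L^2(D)}\le CM$ with $C$ independent of $k$. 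Thus $(w_k)$ is bounded in $W^{1,2,2}((s,t^*),H^1_0(D),H^{-1}(D))$.

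By the Aubin--Lions theorem the embedding $W^{1,2,2}((s,t^*),H^1_0(D),H^{-1}(D))\hookrightarrow L^2((s,t^*),L^2(D))$ is compact, so after passing to a subsequence (not relabeled) the $w_k$ converge strongly in $L^2((s,t^*),L^2(D))$; in particular $(w_k)$ is Cauchy in that norm. The key observation is then that, by linearity of \cref{eq:weak_local_advec}, the difference $w_j-w_k$ again solves \cref{eq:weak_local_advec} with $f\equiv 0$ and initial datum $g_j-g_k$, so \cref{prop:Caccioppoli} applies to it and gives
\[
\|(w_j-w_k)(t^*,\cdot)\|_{L^2(D)}^2 \;\le\; \frac{2}{t^*-s}\,\|w_j-w_k\|_{L^2((s,t^*),L^2(D))}^2 .
\]
Since the right-hand side tends to $0$ as $j,k\to\infty$ along the subsequence, the sequence $\big(w_k(t^*,\cdot)\big)=\big(\mathcal{T}_{s\rightarrow t^*}g_k\big)$ is Cauchy in $L^2(D)$ and hence convergent. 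As $(g_k)$ was an arbitrary bounded sequence, this shows that $\mathcal{T}_{s\rightarrow t^*}$ maps bounded sets to relatively compact sets, i.e.\ it is compact.

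All steps are routine once \cref{prop:Caccioppoli} is available; the only point requiring some care is the energy estimate, where one must check that the advection term $(b\cdot\nabla w_k,w_k)_{L^2(D)}$ and reaction term $(c\,w_k,w_k)_{L^2(D)}$ do not spoil coercivity --- but this is precisely guaranteed by the well-posedness assumption $c-\tfrac12\nabla\cdot b\ge 0$, which (combined with Gauss's theorem and the homogeneous boundary condition) renders their sum nonnegative. I do not anticipate any genuine difficulty beyond this bookkeeping, since the essential work --- passing from space--time control to control at the time point $t^*$ --- is already accomplished by the Caccioppoli inequality.
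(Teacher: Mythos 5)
Your proof is correct and rests on exactly the two ingredients the paper uses: the Caccioppoli inequality of \cref{prop:Caccioppoli} and the Aubin--Lions theorem \cite[Corollary 5]{Sim86}. The only place you deviate is the final step: the paper first extracts a weak limit $w$, verifies (via Hahn--Banach and arguments borrowed from \cite{SchSme20}) that $w$ itself solves \cref{eq:weak_local_advec} with $f\equiv 0$ and admissible initial data, and then applies \cref{prop:Caccioppoli} to the differences $w-w_{n_{l_m}}$; you instead apply \cref{prop:Caccioppoli} directly to $w_j-w_k$, which are homogeneous solutions by linearity, and conclude via the Cauchy criterion. Your variant is a mild but genuine streamlining, since it dispenses entirely with the need to show that the weak limit is a solution. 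Your explicit bound on $\|(w_k)_t\|_{L^2((s,t^*),H^{-1}(D))}$, required to invoke Aubin--Lions, is also a point the paper passes over rather quickly.
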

	
	\begin{proof}
		Let $(\xi_{n})_{n\in \mathbb{N}}$ be a bounded sequence in $L^{2}(D)$. We denote by $(w_{n})_{n\in \mathbb{N}} \subset W^{1,2,2}((s,t^*),H^1_{0}( D),H^{-1}( D))$ the corresponding sequence of solutions of problem \cref{eq:weak_local_advec} with initial conditions $w_n(s) = \xi_{n}$ in $L^2(D)$ and right-hand side $f\equiv 0$, obtaining $\|w_{n}\|_{L^2((s,t^*),H^{1}_{0}(D))} \leq C$ for a constant $0<C<\infty$. Then, there exists a subsequence $(w_{n_l})_{l\in\N}$ and a limit function $w \in L^2((s,t^*),H^1_{0}(D))$ such that the subsequence converges weakly to $w$ in $L^2((s,t^*),H^{1}_{0}(D))$. Thanks to this weak convergence and the Hahn-Banach theorem we infer that $w \in W^{1,2,2}((s,t^*),H^1_{0}(D),H^{-1}(D))$ and that $w$ solves the local PDE \cref{eq:weak_local_advec} for $f\equiv 0$ (cf. proofs of Lemma 3.2 and Theorem A.1 in \cite{SchSme20}) and initial conditions $w(s,\cdot) \in L^{2}(D)$ (due to the embedding $W^{1,2,2}((s,t^*),H^1_0(D),H^{-1}(D))\hookrightarrow C^0([s,t^*],L^2(D))$). Here, we moreover employ that the sequence $((w_{n_l})_t)_{l\in\N}$ converges weakly-$*$ to $w_t$ in $L^2((s,t^*),H^{-1}(D))$ (cf. proof of Theorem A.1 in \cite{SchSme20}).
		As there also holds that $(w_{n_l})_{l\in\N} \subseteq W^{1,2,2}((s,t^*),$ $H^1_{0}(D),H^{-1}(D))$, the compactness theorem of Aubin-Lions \cite[Corollary 5]{Sim86} yields a subsequence $(w_{n_{l_m}})_{m\in\N}$ which converges strongly to $w$ in $L^2((s,t^*),L^2(D))$. Since the sequence $e_{n_{l_{m}}}:=w - w_{n_{l_m}}$ thus solves \cref{eq:weak_local_advec} with $f\equiv 0$ and some initial conditions in $L^{2}(D)$, we may invoke \cref{prop:Caccioppoli} to infer that
		\begin{align*}
		\|w(t^*\mspace{-1mu}, \cdot)\mspace{-2mu} - \mspace{-2mu}w_{n_{l_m}}(t^*\mspace{-1mu}, \cdot)\|_{L^{2}(D)}^{2} \mspace{-4mu}= \mspace{-4mu}\| e_{n_{l_{m}}}( t^*\mspace{-1mu}, \cdot) \|_{L^{2}(D)}^{2}\mspace{-4mu} \leq \mspace{-4mu}\frac{2}{(t^*-s)} \|e_{n_{l_{m}}}\|_{L^{2}((s,t^*),L^{2}(D))}^{2} \mspace{-4mu}\longrightarrow \mspace{-4mu}0.
		\end{align*}
	\end{proof}

\section{Error bounds for squared norm and leverage score sampling} \label{supp:error_bounds}

	In this subsection,	we state and briefly discuss error bounds for the squared norm and leverage score sampling approach introduced in \cref{subsec_choice_prob}. 
	
	The following theorem from \cite{FrKaVe04} gives an additive error bound on the approximation quality of the squared norm sampling approach.
	
	\begin{theorem} $\textnormal{(\cite[Theorem 2]{FrKaVe04}).}$ \label{theorem_error_squared_norm}
		Let $\mathbf{C}\in \R^{N_D \times m}$ be a sample of $m$ columns of $\mathbf{B}$ using the squared norm probability distribution and let $\mathbf{C}^\dagger$ denote the Moore-Penrose inverse of $\mathbf{C}$. Then, with probability at least $0.9$ it holds that
		\begin{align}\label{a_priori_squared_norm}
		\Vert \mathbf{B} -  \mathbf{C}\mathbf{C}^\dagger \mathbf{B} \Vert_F^2 \leq \Vert \mathbf{B} - \mathbf{B}_r \Vert_F^2 + \frac{10 \mspace{1mu}r}{m} \Vert \mathbf{B} \Vert_F^2.
		\end{align}
		Here, $\mathbf{B}_r$ denotes the best rank-$r$ approximation to $\mathbf{B}$.
	\end{theorem}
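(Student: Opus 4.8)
The statement is Theorem~2 of \cite{FrKaVe04}; I sketch how I would reconstruct it. The plan is to build, out of the sampled columns, an explicit rank-$r$ matrix $\hat{\mathbf{B}}$ whose column space lies inside $\mathrm{range}(\mathbf{C})$ and that matches $\mathbf{B}$ well in expectation, to observe that $\mathbf{C}\mathbf{C}^\dagger\mathbf{B}$ can only do better, and to conclude with Markov's inequality.

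Write $\mathbf{B}=\sum_k\sigma_k u_k v_k^\top$ for the SVD of $\mathbf{B}$, let $i_1,\dots,i_m$ be the drawn column indices, and recall $p_i=\Vert\mathbf{B}[:,i]\Vert_2^2/\Vert\mathbf{B}\Vert_F^2$. For each $k\le r$ I would introduce the random vector
\[
\tilde u_k:=\frac{1}{\sigma_k m}\sum_{t=1}^m \frac{v_k[i_t]}{p_{i_t}}\,\mathbf{B}[:,i_t],
\]
which is a linear combination of the sampled columns, hence $\tilde u_k\in\mathrm{range}(\mathbf{C})$. Using $\mathbf{B}v_k=\sigma_k u_k$ one checks that $\mathbb{E}[\tilde u_k]=u_k$, and a one-line variance estimate gives
\[
\mathbb{E}\Vert\tilde u_k-u_k\Vert_2^2\;\le\;\frac{1}{\sigma_k^2 m}\sum_i\frac{v_k[i]^2}{p_i}\,\Vert\mathbf{B}[:,i]\Vert_2^2\;=\;\frac{\Vert\mathbf{B}\Vert_F^2}{\sigma_k^2 m},
\]
where the squared norm choice of $p_i$ together with $\sum_i v_k[i]^2=1$ is exactly what makes the last equality hold.

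Setting $\hat{\mathbf{B}}:=\sum_{k=1}^r\sigma_k\tilde u_k v_k^\top$, orthonormality of $v_1,\dots,v_r$ yields the exact identity
\[
\Vert\mathbf{B}-\hat{\mathbf{B}}\Vert_F^2=\Vert\mathbf{B}-\mathbf{B}_r\Vert_F^2+\sum_{k\le r}\sigma_k^2\,\Vert u_k-\tilde u_k\Vert_2^2,
\]
in which the first summand is deterministic. Hence the nonnegative excess $\Vert\mathbf{B}-\hat{\mathbf{B}}\Vert_F^2-\Vert\mathbf{B}-\mathbf{B}_r\Vert_F^2$ has expectation $\sum_{k\le r}\sigma_k^2\,\mathbb{E}\Vert u_k-\tilde u_k\Vert_2^2\le\frac{r}{m}\Vert\mathbf{B}\Vert_F^2$, the weighting by $\sigma_k^2$ cancelling the $\sigma_k^{-2}$ in the variance. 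Markov's inequality applied to this excess gives, with probability at least $0.9$, $\Vert\mathbf{B}-\hat{\mathbf{B}}\Vert_F^2\le\Vert\mathbf{B}-\mathbf{B}_r\Vert_F^2+\frac{10r}{m}\Vert\mathbf{B}\Vert_F^2$. Finally, since the columns of $\hat{\mathbf{B}}$ lie in $\mathrm{range}(\mathbf{C})$ and $\mathbf{C}\mathbf{C}^\dagger$ is the orthogonal projector onto $\mathrm{range}(\mathbf{C})$, the matrix $\mathbf{C}\mathbf{C}^\dagger\mathbf{B}$ is the Frobenius-best approximation of $\mathbf{B}$ among matrices with column space in $\mathrm{range}(\mathbf{C})$, so $\Vert\mathbf{B}-\mathbf{C}\mathbf{C}^\dagger\mathbf{B}\Vert_F\le\Vert\mathbf{B}-\hat{\mathbf{B}}\Vert_F$, which is \cref{a_priori_squared_norm}.

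The one step that needs a genuine idea, and hence the main obstacle, is the design of the estimator $\tilde u_k$: it has to be a random combination of the sampled columns (so that it lands in $\mathrm{range}(\mathbf{C})$), unbiased for $u_k$, and with variance scaling like $\sigma_k^{-2}/m$, so that after the weighting by $\sigma_k^2$ in $\hat{\mathbf{B}}$ the $r$ directions contribute a total of $r/m$ rather than $\sqrt{r/m}$; the latter is what a cruder route — controlling $\Vert\mathbf{B}\mathbf{B}^\top-\mathbf{S}\mathbf{S}^\top\Vert_F$ for a rescaled column sample $\mathbf{S}$ and comparing leading eigenspaces — would deliver. With that estimator in hand, the remaining work (the moment computation, the Pythagorean identity, and Markov) is routine.
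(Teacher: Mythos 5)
The paper does not prove this statement at all --- it is quoted directly from \cite{FrKaVe04} --- so there is no in-paper argument to compare against; your reconstruction is correct and is essentially the standard proof of this bound (an unbiased, $\mathrm{range}(\mathbf{C})$-valued estimator $\tilde u_k$ of each leading left singular vector with variance $\Vert\mathbf{B}\Vert_F^2/(\sigma_k^2 m)$, the Pythagorean identity over the orthonormal $v_k$, Markov's inequality, and optimality of the orthogonal projection $\mathbf{C}\mathbf{C}^\dagger$). The only formal loose end is the degenerate case $\sigma_k=0$ for some $k\le r$, where $\tilde u_k$ is undefined; truncating the sum at $\mathrm{rank}(\mathbf{B})$ disposes of it, since then $\mathbf{B}_r=\mathbf{B}$ and the bound is trivial.
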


	The subsequent theorem from \cite{DrMaMu08} gives a multiplicative error bound on the approximation quality of the leverage score sampling approach.
	
	\begin{theorem} $\textnormal{(\cite[Theorem 3]{DrMaMu08}).}$ \label{theorem_error_leverage_scores}
		Let $\varepsilon \in (0,1]$ and let $\mathbf{C}\in \R^{N_D \times m}$ be a sample of $m=3200\,r^2 / \varepsilon^2$ columns of $\mathbf{B}$ using the leverage score probability distribution. Then, with probability at least $0.7$ it holds that
		\begin{align}\label{a_priori_leverage_scores}
		\Vert \mathbf{B} -  \mathbf{C}\mathbf{C}^\dagger \mathbf{B} \Vert_F \leq (1+\varepsilon) \Vert \mathbf{B} - \mathbf{B}_r \Vert_F.
		\end{align}
		Here, $\mathbf{C}^\dagger$ denotes the Moore-Penrose inverse of $\mathbf{C}\mspace{2mu}$ and $\mathbf{B}_r$ denotes the best rank-$r$ approximation to $\mathbf{B}$.
	\end{theorem}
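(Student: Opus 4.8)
This is the relative-error column subset selection bound of \cite{DrMaMu08}; the plan is to reproduce its proof, which reduces the claim to two applications of an unbiased approximate matrix multiplication estimate, the decisive one exploiting that the residual $\mathbf{B}-\mathbf{B}_r$ is orthogonal, on its row side, to the leading right singular space of $\mathbf{B}$.

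First I would fix notation. Write $\mathbf{B}_r=\mathbf{U}_r\mathbf{\Sigma}_r\mathbf{V}_r^\top$ for the truncated SVD, with $\mathbf{V}_r\in\R^{N_I\times r}$ having columns $\mathbf{v}_1,\dots,\mathbf{v}_r$, and set $\mathbf{B}_{r,\perp}:=\mathbf{B}-\mathbf{B}_r$, so $\|\mathbf{B}_{r,\perp}\|_F=\|\mathbf{B}-\mathbf{B}_r\|_F$. Encode the sampling of $m$ columns with probabilities $p_i=\tfrac1r\sum_{j=1}^r\mathbf{v}_j[i]^2=\tfrac1r\|(\mathbf{V}_r^\top)_{:,i}\|_2^2$ by a selection matrix $\mathbf{S}\in\R^{N_I\times m}$ and a diagonal rescaling $\mathbf{D}\in\R^{m\times m}$ with $\mathbf{D}_{tt}=(m\,p_{i_t})^{-1/2}$; rescaling columns by the invertible $\mathbf{D}$ does not change the column space, so $\mathrm{range}(\mathbf{C})=\mathrm{range}(\mathbf{B}\mathbf{S})=\mathrm{range}(\mathbf{B}\mathbf{S}\mathbf{D})$. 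Since $\mathbf{C}\mathbf{C}^\dagger$ is the orthogonal projector onto $\mathrm{range}(\mathbf{C})$, the matrix $\mathbf{C}\mathbf{C}^\dagger\mathbf{B}$ is the best Frobenius approximation of $\mathbf{B}$ whose columns lie in $\mathrm{range}(\mathbf{C})$, hence $\|\mathbf{B}-\mathbf{C}\mathbf{C}^\dagger\mathbf{B}\|_F\le\|\mathbf{B}-\mathbf{C}\mathbf{Y}\|_F$ for \emph{any} $\mathbf{Y}$. I would pick $\mathbf{Y}$ so that $\mathbf{C}\mathbf{Y}$ reproduces the rank-$r$ part exactly, namely (with $\mathbf{C}=\mathbf{B}\mathbf{S}$) $\mathbf{Y}=\mathbf{D}(\mathbf{V}_r^\top\mathbf{S}\mathbf{D})^\dagger\mathbf{V}_r^\top$. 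On the event that $\mathbf{V}_r^\top\mathbf{S}\mathbf{D}\in\R^{r\times m}$ has full row rank one has $(\mathbf{V}_r^\top\mathbf{S}\mathbf{D})(\mathbf{V}_r^\top\mathbf{S}\mathbf{D})^\dagger=\mathbf{I}_r$, and substituting $\mathbf{B}=\mathbf{U}_r\mathbf{\Sigma}_r\mathbf{V}_r^\top+\mathbf{B}_{r,\perp}$ produces the cancellation
\begin{align*}
\mathbf{B}-\mathbf{C}\mathbf{Y}=\mathbf{B}_{r,\perp}-\mathbf{B}_{r,\perp}\mathbf{S}\mathbf{D}(\mathbf{V}_r^\top\mathbf{S}\mathbf{D})^\dagger\mathbf{V}_r^\top .
\end{align*}
Using $\|\mathbf{X}\mathbf{V}_r^\top\|_F=\|\mathbf{X}\|_F$, it then suffices to show $\|\mathbf{B}_{r,\perp}\mathbf{S}\mathbf{D}(\mathbf{V}_r^\top\mathbf{S}\mathbf{D})^\dagger\|_F\le\varepsilon\|\mathbf{B}_{r,\perp}\|_F$ with the claimed probability.

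The core step is the approximate matrix multiplication lemma: for $\mathbf{X},\mathbf{Z}$ with $N_I$ columns, the sampled product $\mathbf{X}\mathbf{S}\mathbf{D}\mathbf{D}\mathbf{S}^\top\mathbf{Z}^\top=\sum_{t=1}^m(m\,p_{i_t})^{-1}\mathbf{X}_{:,i_t}\mathbf{Z}_{:,i_t}^\top$ is unbiased for $\mathbf{X}\mathbf{Z}^\top$ and satisfies $\mathbb{E}\big[\|\mathbf{X}\mathbf{S}\mathbf{D}\mathbf{D}\mathbf{S}^\top\mathbf{Z}^\top-\mathbf{X}\mathbf{Z}^\top\|_F^2\big]\le\tfrac1m\sum_i\tfrac{\|\mathbf{X}_{:,i}\|_2^2\,\|\mathbf{Z}_{:,i}\|_2^2}{p_i}$. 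I would apply this twice. With $\mathbf{X}=\mathbf{B}_{r,\perp}$, $\mathbf{Z}=\mathbf{V}_r^\top$: here the exact product vanishes, $\mathbf{B}_{r,\perp}\mathbf{V}_r=\mathbf{0}$ (the rows of $\mathbf{B}_{r,\perp}$ lie in $\mathrm{span}\{\mathbf{v}_j^\top:j>r\}\perp\mathrm{range}(\mathbf{V}_r)$), and with $p_i=\|(\mathbf{V}_r^\top)_{:,i}\|_2^2/r$ the bound collapses to $\mathbb{E}\|\mathbf{B}_{r,\perp}\mathbf{S}\mathbf{D}\mathbf{D}\mathbf{S}^\top\mathbf{V}_r\|_F^2\le\tfrac rm\|\mathbf{B}_{r,\perp}\|_F^2$. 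With $\mathbf{X}=\mathbf{Z}=\mathbf{V}_r^\top$ it gives $\mathbb{E}\|(\mathbf{V}_r^\top\mathbf{S}\mathbf{D})(\mathbf{V}_r^\top\mathbf{S}\mathbf{D})^\top-\mathbf{I}_r\|_F^2\le\tfrac rm\sum_i\|(\mathbf{V}_r^\top)_{:,i}\|_2^2=\tfrac{r^2}{m}$. Inserting $m=3200\,r^2/\varepsilon^2$ and applying Markov's inequality to each estimate with a failure budget of $1/10$ each, one obtains, on an event of probability at least $0.7$, both $\|(\mathbf{V}_r^\top\mathbf{S}\mathbf{D})(\mathbf{V}_r^\top\mathbf{S}\mathbf{D})^\top-\mathbf{I}_r\|_2\le\tfrac12$ — which forces $\mathbf{V}_r^\top\mathbf{S}\mathbf{D}$ to have full row rank (legitimising the assumption above) and gives $\|[(\mathbf{V}_r^\top\mathbf{S}\mathbf{D})(\mathbf{V}_r^\top\mathbf{S}\mathbf{D})^\top]^{-1}\|_2\le2$ — and $\|\mathbf{B}_{r,\perp}\mathbf{S}\mathbf{D}\mathbf{D}\mathbf{S}^\top\mathbf{V}_r\|_F\le\tfrac{\varepsilon}{\sqrt{320}}\|\mathbf{B}_{r,\perp}\|_F$. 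Writing $(\mathbf{V}_r^\top\mathbf{S}\mathbf{D})^\dagger=(\mathbf{V}_r^\top\mathbf{S}\mathbf{D})^\top[(\mathbf{V}_r^\top\mathbf{S}\mathbf{D})(\mathbf{V}_r^\top\mathbf{S}\mathbf{D})^\top]^{-1}$ and chaining the submultiplicative bounds yields $\|\mathbf{B}_{r,\perp}\mathbf{S}\mathbf{D}(\mathbf{V}_r^\top\mathbf{S}\mathbf{D})^\dagger\|_F\le 2\cdot\tfrac{\varepsilon}{\sqrt{320}}\|\mathbf{B}_{r,\perp}\|_F\le\varepsilon\|\mathbf{B}_{r,\perp}\|_F$, and therefore $\|\mathbf{B}-\mathbf{C}\mathbf{C}^\dagger\mathbf{B}\|_F\le(1+\varepsilon)\|\mathbf{B}-\mathbf{B}_r\|_F$.

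The routine parts are the reduction via optimality of the projection and the pseudoinverse identities; the technical heart — and the only real obstacle — is the approximate matrix multiplication estimate together with the bookkeeping that makes the two error terms scale like $r/m$ and $r^2/m$. This is exactly where the leverage-score weights $p_i\propto\|(\mathbf{V}_r^\top)_{:,i}\|_2^2$ are used, and it is also where the sample size $\Theta(r^2/\varepsilon^2)$ originates: replacing the crude second-moment-plus-Markov argument for $\|(\mathbf{V}_r^\top\mathbf{S}\mathbf{D})(\mathbf{V}_r^\top\mathbf{S}\mathbf{D})^\top-\mathbf{I}_r\|_2$ by a matrix Chernoff inequality would improve $m$ to $\Theta(r\log r/\varepsilon^2)$, but for the stated bound the weaker argument suffices. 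For the exact tracking of the constant $3200$ and the failure probability $0.7$ I would refer to \cite[Theorem 3 and its proof]{DrMaMu08}.
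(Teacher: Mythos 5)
The paper does not prove this statement at all---it is imported verbatim as \cite[Theorem 3]{DrMaMu08}, so there is no internal proof to compare against. Your reconstruction of the argument from that reference is correct and complete in outline: the reduction $\Vert \mathbf{B}-\mathbf{C}\mathbf{C}^\dagger\mathbf{B}\Vert_F\leq\Vert\mathbf{B}-\mathbf{C}\mathbf{Y}\Vert_F$ with $\mathbf{Y}=\mathbf{D}(\mathbf{V}_r^\top\mathbf{S}\mathbf{D})^\dagger\mathbf{V}_r^\top$, the cancellation of the rank-$r$ part on the full-row-rank event, and the two applications of the approximate matrix multiplication second-moment bound (giving $r\,\Vert\mathbf{B}-\mathbf{B}_r\Vert_F^2/m$ and $r^2/m$ respectively, precisely because $p_i=\tfrac1r\Vert(\mathbf{V}_r^\top)_{:,i}\Vert_2^2$) are exactly the mechanism of the cited proof, and your bookkeeping with $m=3200\,r^2/\varepsilon^2$, Markov with failure budget $1/10$ per event, and the factor $2$ from $\Vert[(\mathbf{V}_r^\top\mathbf{S}\mathbf{D})(\mathbf{V}_r^\top\mathbf{S}\mathbf{D})^\top]^{-1}\Vert_2\leq 2$ indeed yields $2\varepsilon/\sqrt{320}<\varepsilon$ at probability $0.8\geq 0.7$, so the constants close. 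The only cosmetic remark is that the two terms in $\mathbf{B}-\mathbf{C}\mathbf{Y}$ are Frobenius-orthogonal (their row spaces lie in $\mathrm{range}(\mathbf{V}_r)^\perp$ and $\mathrm{range}(\mathbf{V}_r)$ respectively), so one gets $\sqrt{1+\varepsilon^2}$ rather than $1+\varepsilon$; the triangle inequality you use is of course sufficient for the stated bound.
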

	
	The observations and discussion in \cref{subsec_disc_choice_prob} are in line with the error bounds stated in \cref{theorem_error_squared_norm,theorem_error_leverage_scores}: The rank-$r$ best approximation error in \cref{a_priori_squared_norm,a_priori_leverage_scores} is given by $\Vert \mathbf{B} - \mathbf{B}_r \Vert_F = (\sum_{i=r+1}^{R}\sigma_{i}^2)^{1/2}$ (Eckart-Young theorem e.g. in \cite{GV13}), where $\sigma_{1}\geq \sigma_{2} \geq \ldots$ denote the singular values of $\mathbf{B}$ and $R\leq \min \lb N_D, N_I \rb$ denotes its rank. Moreover, the Frobenius-norm in the last term of \cref{a_priori_squared_norm} is determined by $\Vert \mathbf{B}\Vert_F = (\sum_{i=1}^R \sigma_{i}^2 )^{1/2}$. Consequently, error bound \cref{a_priori_squared_norm} indicates that in the squared norm sampling approach the number $m$ of selected columns needs to be large in order to detect structures that correspond to singular values $\sigma_i$ for which $\sigma_i/\sigma_j$ with $i\in \lb r+1,\ldots,R\rb$ and $j\in \lb 1,\ldots,r\rb$ is small. As $m$ appears in the denominator in the last term of \cref{a_priori_squared_norm}, it has to be chosen as $m \approx 10 \mspace{1mu} r\mspace{1mu} (\sum_{i=1}^{R}\sigma_{i}^2 ) / (\sum_{i=r+1}^R \sigma_{i}^2)$ to achieve an error that is of the order of the best rank-$r$ approximation to $B$. In contrast, the bound for the leverage score sampling approach in \cref{a_priori_leverage_scores} is of the order $(\sum_{i=r+1}^{R}\sigma_{i}^2)^{1/2}$ for a number of selected columns that is independent of the size of the singular values of $\mathbf{B}$.

	\section{Data functions and parameters for Example 1}
	\label{appendix_data_example_1}
	To ensure reproducibility, we list here data functions and discretization parameters corresponding to Example 1 in \cref{subsec_disc_choice_prob}. We choose $I=(0,10)$, $D=(0,1)^2$, $\Sigma_N = \emptyset$, and discretize the spatial domain $D$ with a regular quadrilateral mesh with mesh size $1/50$ in both directions. For the implicit Euler method, we use an equidistant time step size of $1/30$. Furthermore, the two source terms are given by $f_i(t,x,y)= \sum_{j=1}^2 f_{i,j}(t) f_{j}(x,y)$ with $f_{1,1}(t)=4\,\mathbbm{1}_{(t\geq 1)}\mathbbm{1}_{(t \leq 4)}$, $f_{1,2}(t) = \mathbbm{1}_{(t\geq 6)}\mathbbm{1}_{(t \leq 9)}$, $f_{2,1}(t)= \mathbbm{1}_{(t\geq 1)}\mathbbm{1}_{(t \leq 7)}$, $f_{2,2}(t)=\mathbbm{1}_{(t\geq 9)}\mathbbm{1}_{(t \leq 9.2)}$, $f_1(x,y)=\mathbbm{1}_{(x \geq 0.2)} \mathbbm{1}_{(x \leq 0.3)} \mathbbm{1}_{(y \geq 0.2)} \mathbbm{1}_{(y \leq 0.3)}$, and $f_2(x,y) = \mathbbm{1}_{(x \geq 0.7)} \mathbbm{1}_{(x \leq 0.8)} \mathbbm{1}_{(y \geq 0.7)} \mathbbm{1}_{(y \leq 0.8)}$. For computing the probability distributions, we choose $\mathbf{B}=\mathbf{F}$, i.e. the corresponding right-hand side matrix \cref{FE_matrices}.

\section*{Acknowledgments}{The authors thank Dr. Alexander Heinlein for providing us with the data file of the permeability field $\kappa_0$ used in Example 4. Moreover, we thank Dr. Christian Himpe for discussions regarding system and control theory.}

\bibliographystyle{siamplain}

\end{document}